\def\XXint#1#2#3{{\setbox0=\hbox{$#1{#2#3}{\int}$}
     \vcenter{\hbox{$#2#3$}}\kern-.5\wd0}}
\theoremstyle{plain}
\newtheorem{theo}{Theorem}[section]%[Definizioni]
\newtheorem{lem}[theo]{Lemma}%[Definizioni]
\newtheorem{prop}[theo]{Proposition}%[Definizioni]
\newtheorem{cor}[theo]{Corollary}%
\theoremstyle{definition}
\newtheorem{definition}[theo]{Definition}%[Definizioni]
\theoremstyle{remark}
\newtheorem{rem}[theo]{Remark}%[Definizioni]
\numberwithin{equation}{section}
\newcommand{\R}{\mathbb{R}}
\newcommand{\N}{\mathbb{N}}
\newcommand{\M}{\mathbb{M}}
\newcommand{\K}{\mathbb{K}}
\title{Strong unique continuation and global regularity\\
 estimates for nanoplates

}
\author{Antonino Morassi\thanks{Dipartimento Politecnico di Ingegneria e Architettura,
Universit\`a degli Studi di Udine, via Cotonificio 114, 33100
Udine, Italy. E-mail: \textsf{antonino.morassi@uniud.it}}, \  Edi
Rosset\thanks{Dipartimento di Matematica e Geoscienze,
Universit\`a degli Studi di Trieste, via Valerio 12/1, 34127
Trieste, Italy. E-mail: \textsf{rossedi@units.it}}, \ Eva 
Sincich\thanks{Dipartimento di Matematica e Geoscienze,
	Universit\`a degli Studi di Trieste, via Valerio 12/1, 34127
	Trieste, Italy. E-mail: \textsf{esincich@units.it}} \
and Sergio
Vessella\thanks{Dipartimento di Matematica e Informatica ``Ulisse
Dini'', Universit\`a degli Studi di Firenze, Via Morgagni 67/a,
50134 Firenze, Italy. E-mail: \textsf{sergio.vessella@unifi.it}}}
\date{}
\begin{document}

\maketitle

\noindent \textbf{Abstract:} 
In this paper we analyze some properties of a sixth order elliptic operator arising in the framework of the strain gradient linear elasticity theory for nanoplates in flexural deformation. We first rigorously deduce the weak formulation of the underlying Neumann problem as well as its well posedness. Under some suitable smoothness assumptions on the coefficients and on the geometry we derive interior and boundary regularity estimates for the solution of the Neumann problem. Finally, for the case of isotropic materials, we obtain new Strong Unique Continuation results in the interior, in the form of doubling inequality and three spheres inequality by a Carleman estimates approach. 

\medskip

\medskip
 
\noindent \textbf{Mathematical Subject Classifications (2010)}: 35J30, 74K20, 35B60.

\medskip

\medskip

\noindent \textbf{Key words}: higher-order elliptic equations, nanoplates, Strong Unique Continuation, Neumann problem.  

%\tableofcontents 

\section{Introduction} 
\label{sec:introduction}

In this work, we begin a line of research aimed at studying some recent models for two-dimensional micro and nanomechanical systems, which we will refer to as \textit{nanoplates}. In particular, here we deal with the formulation and well-posedness of the direct problem describing the static equilibrium of a nanoplate under Neumann boundary conditions, and we derive some unique continuation properties of the solutions to the equation of nanoplates in bending. The latter properties, as is well known, constitute the essential tool for the study of inverse boundary value problems.

Nanoplates are nowadays widespread as mass sensors, biomarkers or gas sensors, as well as actuators for vibration control purposes \cite{Eom-PR-2011,HJIAY-IJNLM-2020}. The plate typology, although less common than nanobeams, has some inherent mechanical advantages that include robustness, which is a relevant feature for fabrication and functionalisation, and higher stiffness, which results in higher frequencies and small free vibration energy dissipation in both fluid and gaseous environments \cite{Shen-CMS-2012,Bhaswara-JMM-2014}. Albeit the detection of added mass is one of the most popular issues in applications \cite{HKCMSR-NN-2015,MYW-APE-2019,DFFSMZ-IJSS-2020}, other notable  inverse problems for nanoplates involve force or pressure sensing {}from dynamic data \cite{KMZ-MMAS-2022}. In addition, there has recently been growing interest in the development of diagnostic techniques for assessing the presence of defects in nanoplates, thus paving the way for the extension of methods hitherto designed for large-scale mechanical systems to the nanodimensional size as well \cite{YZC-SV-2020}.

The modeling of nanoplates presents specific requirements due to the presence of size effects and, therefore, classical Continuum Mechanics, as a length-scale free theory, loses its predictive capacity in this field. In the last two decades various Generalized Continuum Mechanics theories, such as Couple Stress, Nonlocal or Strain/Stress Gradient, have been proposed to model nanostructures and, specifically, nanoplates within the linear elasticity setting. Among these theories, the Simplified Strain Gradient Elasticity theory (SSGE) developed by Lam et al. \cite{Lam2003} has achieved a remarkable diffusion and has been applied to a wide variety of one-dimensional nanostructures, see, among others contributions, \cite{AC-JVC-2014, KZNW-IJES-2009}. Some recent works address the study of Kirchhoff-Love's nanoplates using SSGT \cite{MM-EIMS-2013,WZZC-EIMS-2011}. Let us begin by recalling the partial differential equation that expresses the static equilibrium of a nanoplate in bending under vanishing body forces and couples. Let $\Omega$ be the middle surface of a nanoplate having uniform thickness $t$, and let us denote by $(x_1,x_2)$ the Cartesian coordinates of a point of $\Omega$. It turns out that the transverse displacement $u=u(x_1,x_2)$ of the nanoplate satisfies the sixth order elliptic equation
\begin{equation}
	\label{eq:introd-6th-PDE}
	\frac{\partial^2 }{\partial x_i \partial x_j}\left (
	-(P_{ijlm}+P_{ijlm}^h   ) \frac{\partial^2 u}{\partial x_l \partial x_m}
	+
	\frac{\partial }{\partial x_k} \left ( Q_{ijklmn}\frac{\partial^3 u}{\partial x_l \partial x_m \partial x_n} \right )
	\right )=0	
	\quad \hbox{in } \Omega,
\end{equation}
where the summation over repeated indexes $i,j,k,l,m,n=1,2$ is assumed. Here, $P_{ijlm}$ are the Cartesian components of the fourth-order tensor describing the material response in classical Kirchhoff-Love theory, whereas $P_{ijlm}^h$ and $Q_{ijklmn}$ are the components of a fourth- and sixth-order tensor, respectively, that account for the three material length scale parameters in the SSGT. We refer to Section \ref{sec:direct} for explicit expressions of the above tensors for an isotropic material. 

The mechanical model of the nanoplate is completed by specifying the conditions that apply at the boundary of $\Omega$. As it occurs in the classical Kirchhoff-Love's theory of plates \cite{K-JRAM-1850}, one of the subtle issues is the determination of the Neumann boundary conditions \cite{WHZZ-AMM-2016}. It should be noted that these conditions also play a crucial role in the formulation of inverse boundary value problems, as they determine the correlation between assigned quantities and unknown quantities at the boundary. Neumann boundary conditions on a curved smooth boundary were derived by Papargyri-Beskou \cite{PGB-IJSS-2010} for Kirchhoff-Love nanoplates within a simplified version of Mindlin's theory with a single scale constant \cite{MindEsh1968}. Unfortunately, the transformation {}from the fixed system of Cartesian coordinates to the local system of coordinates on the boundary used in \cite{PGB-IJSS-2010} did not take into account of the possible variation of the local natural basis $( {n }, {\tau}  )$ in case of curved boundary. Here, the vectors ${n}$, ${\tau}$ are the unit outer normal and the unit tangent to the boundary, respectively. The oversight was present also in the treatment by Lazopoulos \cite{L-MRC-2009}, and in \cite{NN-EIMS-2017} only natural conditions on straight portions of the boundary were considered. 

In what follows we refer to the mechanical Kirchhoff-Love's nanoplate model proposed in \cite{KMZ-MMAS-2022} within the SSGT elasticity, in which the Neumann conditions are correctly derived for smooth boundary (see problem \eqref{eq:M2-1}--\eqref{eq:M2-4}). In Section $3$ we propose an alternative, although equivalent, determination of these boundary conditions (Lemma \ref{Lemma:Fichera}) and we develop the variational formulation of the Neumann problem. We prove existence and uniqueness of the solution (Proposition \ref{prop:DirectProblem}), and improved regularity at the interior (Theorem \ref{theo:loc-int-reg-res-improved}) and up to the boundary (Theorem \ref{prop:glo-reg-res-exProp8-2}). These properties are derived within the framework of high-order boundary value elliptic problems in variational form, and are instrumental to the second contribution of this paper, namely the quantitative unique continuation results obtained in Section $4$ and which we describe below.

First of all, let us recall some basic notions concerning the Unique Continuation Properties. We say that a linear partial differential equation 
\begin{equation}\label{intro:UC}
	L(u) =0 \quad \mbox{in } B,
\end{equation}
 where $B\subset \mathbb{R}^n$ is an open ball (or, more generally, is a connected open set) enjoys the Weak Unique Continuation Property  (WUCP) if the following property holds true: for any open subset $\omega$ of $B$,
$$L(u) = 0 \quad \mbox{in } B \quad \mbox{ and }  \quad u \equiv 0 \quad \mbox{ in } \omega,$$ 
imply $u \equiv 0$ in $B$. 

We say that $L$  enjoys the Strong Unique Continuation Property (SUCP) if the following property holds true: for any point $x_0\in B$ and for any solution $u$ to \eqref{intro:UC} which satisfies

$$\int_{B_{\varrho}(x_0)}u^2=\mathcal{O}\left(\varrho^N\right),\quad \mbox{as } \varrho\rightarrow 0,\  \forall N\in \mathbb{N},$$ 
it follows that
$$u\equiv 0,\quad\mbox{in } B.$$
It is obvious that SUCP implies WUCP.

In the present paper we are interested in quantitative versions of the SUCP. More precisely, we are interested in doubling inequality, which typically takes the form
\begin{equation}
	\label{eq:doubling-w-intro}
    \int_{B_{2r}(x_0)}u^2\leq C \int_{B_{r}(x_0)}u^2,
\end{equation}
where $C$ depends on $u$ but not on $r$. As shown in \cite{Ga-Li}, a consequence of doubling inequality is the SUCP for solutions to \eqref{intro:UC}.

Here, we will prove the doubling inequality for the sixth order equation of nanoplates \eqref{eq:introd-6th-PDE} consisting of inhomogeneous isotropic material (see Section \ref{sec:Doubl-Three-Sphere}). As recalled above, the doubling inequality implies SUCP. A crucial point to derive the doubling inequality for the nanoplate is to observe that the solution to equation \eqref{eq:introd-6th-PDE}, with the tensors $P_{ijlm}$, $P_{ijlm}^h$ and $Q_{ijklmn}$ given respectively by  \eqref{eq:M3-1}, \eqref{eq:M3-2} and \eqref{eq:M4-4}, satisfies the following differential inequality
\begin{equation}\label{diffineq-intro}
|\Delta^3 u|\le M \left(|D\Delta ^2 u|  + \sum_{k=0}^4 |D^k u|\right),
\end{equation}
where $M$ is a positive constant. Now, \eqref{diffineq-intro} tells us that our task of proving the doubling inequality is certainly included in the more general issue of Unique Continuation Property for equations whose principal part is a power of Laplace operator. The literature on this subject is very extensive, here we only remember some papers; namely those concerning the WUCP \cite{Goo},  \cite{LR-Ro}, \cite{Ni}, \cite{Pe}, \cite{Pr}, \cite{Wat} and those concerning  the SUCP \cite{AMRV2009},   \cite{Co-Gr}, \cite{Co-Ko}, \cite{DLMRVW}, \cite{LiNW}, \cite{MRV2007},  \cite{Zhu}.

%(a) concerning the WUCP: \cite{Goo},  \cite{LR-Ro}, \cite{Ni}, \cite{Pe}, \cite{Pr}, \cite{Wat}; 

%(b) concerning  the SUCP: \cite{AMRV2009},   \cite{Co-Gr}, \cite{Co-Ko}, \cite{DLMRVW}, \cite{LiNW}, \cite{MRV2007},  \cite{Zhu}.
 
 A good part of the interest on the subject of WUCP for equations whose principal part is $\Delta^m$ arises {}from the fact that the operator $\Delta^m$ has multiple complex characteristic for $m\geq2$, $m\in \mathbb{N}$, hence the general theory of Carleman estimates, conceived by H\"{o}rmander, \cite{HO63}, see also \cite{Ler}, does not apply. Here we recall that Carleman estimates were introduced in 1939 in \cite{Ca} by the homonymous  mathematician, to prove the uniqueness for a Cauchy problem for elliptic systems in two variables with nonanalytic coefficients. Carleman estimates are today the most powerful and general tool to study the Unique Continuation Property of PDEs.
 
 As it has been proven in \cite{Pr}, the Unique Continuation Property for solutions to the inequality
\begin{equation}\label{diffineq-intro-1}
|\Delta^m u|\le M \sum_{k=0}^h |D^k u|,
\end{equation}
holds true whenever $h=[\frac{3m}{2}]$ (for every $p\in \mathbb{R}$, $[p]$ represents the integer part of $p$). In particular, in our case we have $m=3$, hence $h=4$. On the other side it was proven, see \cite[Ch. 2, Sect. 3]{Zu} and \cite[Sect. 5]{Goo} and references therein, that there exists an operator of the form
$$\Delta^3+L_5,$$
where $L_5$ is a fifth-order operator with continuous coefficients (of complex values), for which the unique continuation property fails. At the light of previous considerations, the case \eqref{diffineq-intro} is, in some sense, an ``intermediate case'' for which the WUCP  and even more, the SUCP is worth to study.

Our main results are : 

(a) a Carleman estimates for the cube Laplacian contained in Proposition \ref {prop:Carleman 3Laplace} which has been obtained by a careful iteration of the Carleman estimate with a suitable singular weight for Laplace operator, see Proposition \ref{prop:Carlm-delta}) (see also  \cite{mrv:mat-cat}). 

(b) the derivation of a doubling inequality for solutions $u$ to \eqref{diffineq-intro} in the form of \eqref{eq:doubling-w-intro}, see Corollary  \ref{SUCP} for precise statement.

(c) three sphere inequalities derived by the above mentioned doubling inequalities that as widely illustrated in \cite{A-R-R-V}, are fundamental tools to obtain estimates of propagation of smallness.

Quantitative estimates in the form of doubling inequalities and of three spheres inequalities have shown to be extremely
useful in the treatment of inverse boundary value problems associated to
the fourth order elliptic equation of the classical Kirchhoff-Love plate in bending \cite{MRV2007}. In a subsequent paper \cite{MRSV-2022-SE} we plan to apply such estimates to diagnostic problems of non-destructive testing for elastic nanoplates, which are modeled as inverse boundary value problems of determining, within a nanoplate $\Omega$, the possible presence of an inclusion made of different material {}from boundary measurements of Neumann data and corresponding work-conjugate quantities.  As a matter of fact, the doubling inequalities and their connection with $A_p$ weight, lead the way to estimates of measure (area) of very general unknown inclusions. We refer to \cite{AMR2003} for the context of second order elliptic equations and systems and to \cite{AMRV2009}, \cite{DLMRVW}, \cite{MRV2009} for the context of fourth order plate equation.

The plan of the paper is as follows. In Section \ref{sec:notation} we collect some notation and definitions. In Section \ref{sec:direct} we introduce the formulation of the direct problem for the nanoplate mechanical model and we prove its well-posedness (Proposition \ref{prop:DirectProblem}). We further analyze the properties of the solution to the nanoplate equilibrium problem  by providing a global regularity result (Theorem \ref{prop:glo-reg-res-exProp8-2}) and an improved regularity result  in the interior (Theorem \ref{theo:loc-int-reg-res-improved}). The unique continuation issues are contained in Section \ref{DTSI}, where we first derive a Carleman estimate for the cube Laplacian (Proposition \ref{prop:Carleman 3Laplace}). By such an estimates  we achieve a doubling inequality for the solution (Theorem \ref{theo:40.teo}) and a refined version of it which allows us to deduce a three Spheres inequality for the solution at hand (Corollary \ref{SUCP}) . In the Appendix, we perform a change of variable argument to express the second order derivative on boundary points in terms of intrinsic coordinates (proof of Lemma \ref{Lemma:Fichera}). 
\section{Notation} 
\label{sec:notation}
\noindent 
Let $P=(x_1(P), x_2(P))$ be a point of $\R^2$. We shall denote by
$B_r(P)$ the disk in $\R^2$ of radius $r$ and center $P$ and by
$R_{a,b}(P)$ the rectangle of center $P$ and sides parallel to the
coordinate axes, of length $2a$ and $2b$, namely
\begin{equation}
\label{eq:rectangle}
R_{a,b}(P)=\{x=(x_1,x_2)\ |\ |x_1-x_1(P)|<a,\ |x_2-x_2(P)|<b \}.     
\end{equation}
\begin{definition}
	\label{def:reg_bordo} (${C}^{k,\alpha}$ regularity)
	Let $\Omega$ be a bounded domain in ${\R}^{2}$. Given $k,\alpha$,
	with $k\in\N$, $k\geq 1$, $0<\alpha\leq 1$, we say that a portion $S$ of
	$\partial \Omega$ is of \textit{class ${C}^{k,\alpha}$ with
		constants $r_{0}$, $M_{0}>0$}, if, for any $P \in S$, there exists
	a rigid transformation of coordinates under which we have $P=0$
	and
	\begin{equation*}
	\Omega \cap R_{r_0,2M_0r_0}=\{x \in R_{r_0,2M_0r_0} \quad | \quad
	x_{2}>g(x_1)
	\},
	\end{equation*}
	where $g$ is a ${C}^{k,\alpha}$ function on $[-r_0,r_0]$
	satisfying
	\begin{equation*}
	g(0)=g'(0)=0,\quad |g\|_{{C}^{k,\alpha}([-r_0,r_0])} \leq M_0r_0,
	\end{equation*}
	where
	\begin{equation*}
	\|g\|_{{C}^{k,\alpha}([-r_0,r_0])} = \sum_{i=0}^k
	r_0^i\sup_{[-r_0,r_0]}|g^{(i)}|+r_0^{k+\alpha}|g|_{k,\alpha},
	\end{equation*}
	
	\begin{equation*}
	|g|_{k,\alpha}= \sup_ {\overset{\scriptstyle t,s\in
			[-r_0,r_0]}{\scriptstyle t\neq s}}\frac{|g^{(k)}(t) -
		g^{(k)}(s)|}{|t-s|^\alpha}.
	\end{equation*}
\end{definition}
We use the convention to normalize all norms in such a way that
their terms are dimensionally homogeneous and coincide with the
standard definition when the dimensional parameter equals one. For
instance, given a function $u:\Omega \rightarrow \R$ we denote 
\begin{equation}
\label{eq:1.1.1}
\|u\|_{H^k(\Omega)}=r_0^{-1} \left ( \sum_{i=0}^k r_0^{2i}\int_\Omega |D^i u|^2 
 \right )^{\frac{1}{2}} \hbox{with}  \int_{\Omega} |D^k u |^2=\int_{\Omega}\sum_{|\alpha|=k} |D^\alpha u |^2
\end{equation}
and so on for boundary and trace norms.
For any $h>0$ we set 
$$\Omega_{h}=\{x\in \Omega \ : \ \mbox{dist}(x,\partial \Omega)>h \}.$$
Given a bounded domain $\Omega$ in $\R^2$ such that $\partial
\Omega$ is of class $C^{k,\alpha}$, we consider as
positive the orientation of the boundary induced by the outer unit
normal $n$ in the following sense. Given a point
$P\in\partial\Omega$, let us denote by $\tau=\tau(P)$ the unit
tangent at the boundary in $P$ obtained by applying to $n$ a
counterclockwise rotation of angle $\frac{\pi}{2}$, that is $\tau=e_3 \times n$, 
where $\times$ denotes the vector product in $\R^3$ and $\{e_1,
e_2, e_3\}$ is the canonical basis in $\R^3$.

Given any connected component $\cal C$ of $\partial \Omega$ and
fixed a point $P_0\in\cal C$, let us define as positive the
orientation of $\cal C$ associated to an arclength
parameterization $\psi(s)=(x_1(s), x_2(s))$, $s \in [0, l(\cal
C)]$, such that $\psi(0)=P_0$ and $\psi'(s)=\tau(\psi(s))$. Here
$l(\cal C)$ denotes the length of $\cal C$.

Throughout the paper, we denote by $w,_\alpha$, $\alpha=1,2$,
$w,_s$, and $w,_n$ the derivatives of a function $w$ with respect
to the $x_\alpha$ variable, to the arclength $s$ and to the normal
direction $n$, respectively, and similarly for higher order
derivatives.

We denote by $\M^{2}, \M^{3}$ the Banach spaces of second order and the third order tensors and by 
$\widehat{\M}^{2}, \widehat{\M}^{3}$ the corresponding subspaces of tensors having components invariant with respect to permutations of the indexes.

Let ${\cal L} (X, Y)$ be the space of bounded linear
operators between Banach spaces $X$ and $Y$. Given $\K\in{\cal L} ({\M}^{2},{\M}^{2})$ and $A,B\in \M^{2}$, we use the following notation 
\begin{equation}
\label{eq:2.notation_1}
({\K}A)_{ij} = \sum_{l,m=1}^{2} K_{ijlm}A_{lm}, \quad A \cdot B = \sum_{i,j=1}^{2} A_{ij}B_{ij} \ .
\end{equation}
Similarly, given $\K\in{\cal L} ({\M}^{3},
{\M}^{3})$ and $A,B\in \M^{3}$, we denote 
\begin{equation}
\label{eq:2.notation_1bis}
({\K}A)_{ijk} = \sum_{l,m,n=1}^{2} K_{ijklmn}A_{lmn},\quad A \cdot B = \sum_{i,j,k=1}^{2} A_{ijk}B_{ijk} .
\end{equation}
Moroever, for any $A\in \M^{n}$ with $ n=2,3$ we shall denote 
\begin{equation}
\label{eq:2notation_3}
|A|= (A \cdot A)^{\frac {1} {2}}.
\end{equation}
The linear space of the infinitesimal rigid displacements is defined as
\begin{equation}
\label{eq:def_rig_displ-1}
{\cal R}_2 = \left \{
r(x) = c + Wx, \ c \in \R^2, \ W \in \M^2, \ W+W^T=0
\right \}.
\end{equation}
%
%Given a bounded domain $U\subset\R^2$, we shall denote for $t>0$ 
%\begin{equation}
%\label{eq:internal_envelop}
%U_t =\{x\in U\ |\ \hbox{dist}(x,\partial U)>t\}.
%\end{equation}

\noindent Throughout the paper, summation over repeated indexes is assumed.

\section{The Neumann problem} 
\label{sec:direct}

\subsection{Nanoplate mechanical model}
\label{subsec:a priori}
Let us consider a nanoplate $\Omega \times  \left ( -\frac{t}{2}, \frac{t}{2}   \right )$, where the middle surface $\Omega$ is a bounded domain of $\R^2$, and the thickness $t$ is constant and small with respect to $\mbox{diam}(\Omega)$, i.e. $t << \mbox{diam} (\Omega)$. We assume that the boundary $\partial \Omega$ of $\Omega$ is of class $C^{2,1}$, with constants $r_0$, $M_0$. Moreover, given $M_1>0$, 
\begin{equation}
	\label{eq:M1-1}
	|\Omega| \leq M_1 r_0^2 .
\end{equation}
The material of the nanoplate is assumed to be linearly elastic, inhomogeneous, center-symmetric and isotropic, according to the simplified version of Toupin \cite{Tou1962}, \cite{Tou1964} and Mindlin and Eshel \cite{MindEsh1968} theories proposed by Lam et al. in \cite{Lam2003}.

Under the kinematic framework of the Kirchhoff-Love theory, and for infinitesimal deformation, the statical equilibrium problem of the nanoplate loaded at the boundary and under vanishing body forces is described by the following Neumann boundary value problem \cite{KMZ-MMAS-2022}:
\begin{equation}
	\label{eq:M2-1}
		(M_{\alpha \beta} +  \overline{M}_{\alpha \beta \gamma, \gamma}^h   )_{,\alpha \beta}=0 \quad \hbox{in } \Omega,
\end{equation}
\begin{multline}
	\label{eq:M2-2}
	(
	M_{\alpha \beta} +  \overline{M}_{\alpha \beta \gamma, \gamma}^h
	)_{,\alpha} n_\beta
	+
	(
	(
	M_{\alpha \beta} +  \overline{M}_{\alpha \beta \gamma, \gamma}^h
	)n_\alpha {\tau}_\beta	
	)_{,s}
	+
	(
	\overline{M}_{\alpha \beta \gamma}^h {\tau}_\alpha {\tau}_\beta n_\gamma
	)_{,ss} -
	\\
	-
	(
	\overline{M}_{\alpha \beta \gamma}^h n_\gamma
	(  {\tau}_{\alpha,s} {\tau}_\beta - n_{\alpha,s} n_\beta )
	)	
	_{,s}
	=
	- \widehat{V}  \qquad \hbox{on } \partial \Omega,
\end{multline}
\begin{multline}
	\label{eq:M2-3}
	(
	M_{\alpha \beta} +  \overline{M}_{\alpha \beta \gamma, \gamma}^h
	)n_\alpha n_\beta 
	+
	(
	\overline{M}_{\alpha \beta \gamma}^h n_\gamma ({\tau}_\alpha n_\beta + {\tau}_\beta n_\alpha )
	)_{,s} - \overline{M}_{\alpha \beta \gamma}^h n_\gamma
	(n_{\alpha,s} {\tau}_\beta)
	=\widehat{M}_n  \qquad \hbox{on } \partial \Omega,
\end{multline}
\begin{equation}
	\label{eq:M2-4}
	  \quad\quad \quad \quad  \quad \quad \quad \quad \quad \quad \quad \quad \quad \quad \quad \quad \quad \quad \quad \quad \quad \quad \quad \quad  \quad \quad \quad \quad  \quad  \overline{M}_{\alpha \beta \gamma}^h n_\alpha n_\beta n_\gamma
	=-\widehat{M}_n^h  \qquad \hbox{on } \partial \Omega.
\end{equation}
The functions $M_{\alpha \beta}= M_{\alpha \beta}(u)$, $\overline{M}_{\alpha \beta \gamma}^h=\overline{M}_{\alpha \beta \gamma}^h(u)$, $\alpha, \beta, \gamma=1,2$, in the above equations are the Cartesian components of the couple tensor $M=(M_{\alpha \beta})$ and the high-order couple tensor $\overline{M}^h=(\overline{M}_{\alpha \beta \gamma}^h)$, respectively, corresponding to the transverse displacement $u(x_1,x_2)$, $u : \Omega \rightarrow \R$, of the point $(x_1,x_2)=x$ belonging to the middle surface of the nanoplate. To simplify the notation, the dependence of these quantities on $u$ is not explicitly indicated in \eqref{eq:M2-1}--\eqref{eq:M2-4} and in what follows.

As shown in \cite{KMZ-MMAS-2022}, the functions $M_{\alpha\beta}$ are given by
\begin{equation}
	\label{eq:M2-5}
	M_{\alpha\beta} = -( P_{\alpha \beta \gamma \delta} + P_{\alpha \beta \gamma \delta}^h  ) u_{,\gamma \delta},
\end{equation}
where the fourth order tensors $\mathbb{P} = \mathbb{P}(x) \in L^\infty (\Omega,  \mathcal{L}(\widehat{\M}^2, \widehat{\M}^2) )   $, $\mathbb{P}^h = \mathbb{P}^h(x) \in L^\infty (\Omega,  \mathcal{L}( \widehat{\M}^2, \widehat{\M}^2)   )   $ have Cartesian components $P_{\alpha \beta \gamma \delta}$, $P_{\alpha \beta \gamma \delta}^h $ given by
\begin{equation}
	\label{eq:M3-1}
   	P_{\alpha \beta \gamma \delta}= B((1-\nu)\delta_{\alpha \gamma} \delta_{\beta \delta} + \nu \delta_{\alpha \beta} \delta_{\gamma\delta}
   	),
\end{equation}
\begin{equation}
	\label{eq:M3-2}
	P_{\alpha \beta \gamma \delta}^h= (2a_2+5a_1)\delta_{\alpha \gamma} \delta_{\beta \delta} + (-a_1 -a_2 +a_0) \delta_{\alpha \beta} \delta_{\gamma\delta}.
\end{equation}
It is easy to verify that, for every $A,B\in \widehat{\M}^2$,
\begin{equation}
	\label{eq:M3-3}
	\mathbb{P}A\cdot B=\mathbb{P}B\cdot A,\quad  \mathbb{P}^h A\cdot B=\mathbb{P}^h B\cdot A, \quad  \hbox{a.e. in } \ \Omega,
\end{equation}
The bending stiffness (per unit length) $B=B(x)$ is given by the function
\begin{equation}
	\label{eq:M3-5}
	B(x) = \frac{t^3 E(x)}{12(1-\nu^2(x))},
\end{equation}
where the Young's modulus $E$ and the Poisson's coefficient $\nu$ of the material can be written in terms of the Lamé moduli $\mu$ and $\lambda$ as follows
\begin{equation}
	\label{eq:M3-6}
	E(x) = \frac{\mu(x)(2\mu(x)+3\lambda(x))}{ \mu(x) +\lambda(x)  },
	\quad 
	\nu(x) = \frac{\lambda(x)}{2(\mu(x)+\lambda(x))}.
\end{equation}
On $\mu$ and $\lambda$ we assume the following ellipticity conditions:
\begin{equation}
	\label{eq:M3-7}
\mu(x) \geq \alpha_0 >0, \quad 2\mu(x)+3\lambda(x) \geq \gamma_0 >0 \quad \hbox{a.e. in } \ \Omega,
\end{equation}
where $\alpha_0$, $\gamma_0$ are positive constants.

The coefficients $a_i(x)$, $i=0,1,2$, are given by (see \cite{KMZ-MMAS-2022})
\begin{equation}
	\label{eq:M3-8}
	a_0(x)=2\mu(x)t\mathit{l}_0^2, \quad a_1(x) = \frac{2}{15}\mu(x)t  \mathit{l}_1^2, \quad a_2(x) = \mu(x)t\mathit{l}_2^2, 
\end{equation}
where the material length scale parameters $\mathit{l}_i$ are assumed to be positive constants. 
Denoting
\begin{equation}
	\label{eq:M4-1}
	l=\min\{l_0,l_1,l_2\},
\end{equation}
by \eqref{eq:M3-7}--\eqref{eq:M4-1}, we have
\begin{equation}
	\label{eq:M4-2}
	a_i(x) \geq tl^2\alpha_0^h > 0, \quad i=0,1,2, \quad \hbox{a.e. in } \Omega,
\end{equation}
where $\alpha_0^h = \frac{2}{15} \alpha_0$. 

The functions $\overline{M}_{ijk}^h$ ($i,j,k=1,2$) are given by
\begin{equation}
	\label{eq:M4-3}
	\overline{M}_{ijk}^h= Q_{ijklmn}u_{,lmn},
\end{equation}
where the sixth order tensor $\mathbb{Q}=\mathbb{Q}(x) \in L^\infty( \Omega,   \mathcal{L}(\widehat{\M}^3, \widehat{\M}^3   )    )$ can be expressed as follows ($i,j,k,l,m,n=1,2$)
\begin{multline}
	\label{eq:M4-4}
	Q_{ijklmn}= \frac{1}{3}(b_0 -3b_1)\delta_{ij}\delta_{kn}\delta_{lm}+
	\\
	+ \frac{1}{6}(b_0 -3b_1)
	(
	\delta_{ik} ( \delta_{jl} \delta_{mn} + \delta_{jm}\delta_{ln}   )
	+
	\delta_{jk} ( \delta_{il} \delta_{mn} + \delta_{im}\delta_{ln}   )
	)
	+
	Q_8 
	(
	\delta_{kn} ( \delta_{il}\delta_{jm} +\delta_{im}\delta_{jl}    )
	)
	+
	\\
	+
	Q_9
	(
	\delta_{jn} ( \delta_{il} \delta_{km} + \delta_{im}\delta_{kl}   )
	+
	\delta_{in} ( \delta_{jl} \delta_{km} + \delta_{jm}\delta_{kl}   )
	),	
\end{multline}
where 
\begin{equation}
	\label{eq:M4-5}
	2(Q_8+2Q_9)=5b_1 \ , 
\end{equation} 
\begin{equation}
	\label{eq:M5-1}
	b_0(x)=2\mu(x)\frac{t^3}{12}\mathit{l}_0^2, \quad 
	b_1(x)=\frac{2}{5}\mu(x)\frac{t^3}{12}\mathit{l}_1^2
	\quad \hbox{a.e. in } \ \Omega.
\end{equation}
Note that, by \eqref{eq:M4-4}--\eqref{eq:M4-5}, the constitutive equations \eqref{eq:M4-3} become
\begin{equation}
	\label{eq:M5-2}
	\overline{M}_{ijk}^h= \frac{1}{3}(b_0 -3b_1)
	(   
	\delta_{ij} u_{, mm k} + \delta_{ik} u_{, mm j}
	+ \delta_{jk} u_{, mm i}	
	) + 5b_1 u_{,ijk}.
\end{equation}
It is easy to verify that, for every $A,B\in \widehat{\M}^3$,
\begin{equation}
	\label{eq:M5-3}
	\mathbb{Q}A\cdot B = \mathbb{Q}B\cdot A, \quad \hbox{a.e. in } \ \Omega.
\end{equation}
The functions $\widehat{V}$ (shear force), $\widehat{M}_n$ (bending moment) and $\widehat{M}_n^h$ (high-order bending moment) appearing in the equilibrium boundary equations \eqref{eq:M2-2}--\eqref{eq:M2-4} are the work conjugate actions to the deflection $u$, to the normal derivative $ \frac{\partial u}{\partial n}$ and to the second normal derivative  $ \frac{\partial^2 u}{\partial n^2}$ at the boundary $\partial \Omega$, respectively. On these quantities we require the following regularity conditions
\begin{equation}
	\label{eq:M5-4}
	\widehat{V} \in H^{ - 5/2  }(\partial \Omega), \quad 
	\widehat{M}_n \in H^{ - 3/2  }(\partial \Omega), \quad 
	\widehat{M}_n^h \in H^{ - 1/2  }(\partial \Omega).
\end{equation}
In order to simplify our notation, throughout the paper we will denote by $C, C_1, C_2,\dots$ positive constants which may vary {}from line to line.

\subsection{Variational formulation and well-posedness of the Neumann problem}
\label{subsec:variational}

In view of the variational formulation of the equilibrium problem \eqref{eq:M2-1}--\eqref{eq:M2-4}, we recall the following ellipticity result.

\begin{lem}[Strong convexity of the strain energy density; \cite{KMZ-MMAS-2022}]
	\label{Lemma:M6-1}
Let the tensors $\mathbb{P}$, $\mathbb{P}^h \in L^\infty( \Omega, \mathcal{L}(\widehat{\M}^2, 
\widehat{\M}^2)  )$ and $\mathbb{Q} \in L^\infty( \Omega, \mathcal{L}(\widehat{\M}^3, \widehat{\M}^3   )  )$ be given by \eqref{eq:M3-1}, \eqref{eq:M3-2} and \eqref{eq:M4-4}--\eqref{eq:M5-1} respectively, with Lamé moduli $\lambda$, $\mu$ satisfying \eqref{eq:M3-7}.

For every $w \in H^3(\Omega)$, we have
\begin{equation}
\label{eq:M6-1}
	(\mathbb{P} + \mathbb{P}^h  ) D^2 w \cdot  D^2 w
		\geq
	t (t^2+l^2)
	\xi_{\mathbb{P}} |D^2 w |^2
	 \quad \hbox{a.e. in } \Omega,
\end{equation}
\begin{equation}
\label{eq:M6-2}
	\mathbb{Q} D^3 w \cdot D^3 w 
	\geq t^3\mathit{l}^2
	 \xi_{\mathbb{Q}} 
	|D^3 w |^2
 \quad \hbox{a.e. in } \Omega,
\end{equation}
where $\xi_ {\mathbb{P}}, \xi_{\mathbb{Q}}$ are positive constants only depending on $\alpha_0$ and $\gamma_0$, and $l$ has been defined in \eqref{eq:M4-1}.
\end{lem}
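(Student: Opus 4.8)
The plan is to reduce both inequalities to pointwise algebraic estimates for fully symmetric tensors: for $w\in H^3(\Omega)$ the Hessian $D^2w(x)$ belongs, for a.e.\ $x\in\Omega$, to $\widehat{\M}^2$, and the third gradient $D^3w(x)$ to $\widehat{\M}^3$, so it suffices to prove, for a.e.\ $x$ and every $A\in\widehat{\M}^2$ (resp.\ $A\in\widehat{\M}^3$), that $(\mathbb{P}+\mathbb{P}^h)A\cdot A\ge t(t^2+l^2)\xi_{\mathbb{P}}|A|^2$ (resp.\ $\mathbb{Q}A\cdot A\ge t^3l^2\xi_{\mathbb{Q}}|A|^2$).

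For the first estimate, I would start from \eqref{eq:M3-1}--\eqref{eq:M3-2} and contract with a symmetric $A$ to obtain
\[
(\mathbb{P}+\mathbb{P}^h)A\cdot A=\bigl(B(1-\nu)+2a_2+5a_1\bigr)|A|^2+\bigl(B\nu-a_1-a_2+a_0\bigr)(\mathrm{tr}\,A)^2 .
\]
Then I would split $A=A^D+\tfrac12(\mathrm{tr}\,A)I$ into its deviatoric and spherical parts, which are orthogonal and satisfy $|A|^2=|A^D|^2+\tfrac12(\mathrm{tr}\,A)^2$; this diagonalises the quadratic form into
\[
(\mathbb{P}+\mathbb{P}^h)A\cdot A=\bigl(B(1-\nu)+2a_2+5a_1\bigr)|A^D|^2+\Bigl(\tfrac12B(1+\nu)+\tfrac32a_1+a_0\Bigr)(\mathrm{tr}\,A)^2 .
\]
It then remains to bound each coefficient from below by $t(t^2+l^2)$ times a positive constant depending only on $\alpha_0,\gamma_0$. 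Using \eqref{eq:M3-5}--\eqref{eq:M3-6} one gets the closed forms $B(1-\nu)=\tfrac{t^3\mu}{6}$ and $B(1+\nu)=\tfrac{t^3\mu(2\mu+3\lambda)}{6(2\mu+\lambda)}$; by \eqref{eq:M3-7} the former is $\ge\tfrac{t^3\alpha_0}{6}$, while for the latter, putting $x=2\mu+3\lambda$ so that $2\mu+\lambda=\tfrac13(x+4\mu)$, the quantity $\mu x/(x+4\mu)$ is increasing in $\mu$ and in $x$ separately, hence $\tfrac12B(1+\nu)\ge\tfrac{t^3}{4}\cdot\tfrac{\alpha_0\gamma_0}{\gamma_0+4\alpha_0}$. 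Combining with the lower bounds $a_i\ge tl^2\alpha_0^h>0$ of \eqref{eq:M4-2}, both coefficients are $\ge c\,t(t^2+l^2)$ with $c=c(\alpha_0,\gamma_0)>0$; choosing $\xi_{\mathbb{P}}$ as the minimum of the two lower bounds (accounting for the factor $\tfrac12$ in $|A|^2=|A^D|^2+\tfrac12(\mathrm{tr}\,A)^2$) gives \eqref{eq:M6-1}.

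For the second estimate I would use the explicit constitutive relation \eqref{eq:M5-2}: for $A\in\widehat{\M}^3$, setting $v_k:=A_{mmk}$, a direct contraction gives
\[
\mathbb{Q}A\cdot A=(b_0-3b_1)|v|^2+5b_1|A|^2 .
\]
Since $b_0-3b_1$ need not be nonnegative, I would introduce the orthogonal decomposition $A=\widetilde A+A^{\mathrm{sph}}$, where $A^{\mathrm{sph}}_{ijk}=\tfrac14(\delta_{ij}v_k+\delta_{ik}v_j+\delta_{jk}v_i)$ is the unique spherical tensor with trace vector $v$ and $\widetilde A$ has vanishing trace vector; one checks $A^{\mathrm{sph}}\cdot\widetilde A=0$ and $|A^{\mathrm{sph}}|^2=\tfrac34|v|^2$, so $|A|^2=|\widetilde A|^2+\tfrac34|v|^2$, and substitution yields
\[
\mathbb{Q}A\cdot A=5b_1|\widetilde A|^2+\Bigl(b_0+\tfrac34b_1\Bigr)|v|^2 ,
\]
with both coefficients now strictly positive. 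By \eqref{eq:M5-1}, together with $\mu\ge\alpha_0$ and $l_i\ge l$, one gets $5b_1\ge\tfrac{\alpha_0}{6}t^3l^2$ and $b_0+\tfrac34b_1\ge\tfrac{\alpha_0}{6}t^3l^2$, whence $\mathbb{Q}A\cdot A\ge\tfrac{\alpha_0}{6}t^3l^2\bigl(|\widetilde A|^2+|v|^2\bigr)\ge\tfrac{\alpha_0}{6}t^3l^2|A|^2$, which is \eqref{eq:M6-2} with, say, $\xi_{\mathbb{Q}}=\tfrac{\alpha_0}{6}$.

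The tensor contractions and the two orthogonal decompositions are routine bookkeeping. The one step I expect to be delicate is the lower bound for the spherical coefficient $\tfrac12B(1+\nu)+\tfrac32a_1+a_0$ in the first estimate: because $\lambda$, and hence the Poisson ratio $\nu$, may well be negative, neither positivity nor a uniform lower bound of $B(1+\nu)$ is automatic, and this is precisely where the second ellipticity hypothesis $2\mu+3\lambda\ge\gamma_0>0$ in \eqref{eq:M3-7} is essential.
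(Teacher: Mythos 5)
The paper does not actually prove this lemma: it is quoted verbatim from \cite{KMZ-MMAS-2022}, so there is no internal argument to compare your proposal against. Taken on its own, your proof is correct and complete. I verified the main computations: the contraction $(\mathbb{P}+\mathbb{P}^h)A\cdot A=(B(1-\nu)+2a_2+5a_1)|A|^2+(B\nu-a_1-a_2+a_0)(\mathrm{tr}A)^2$; the diagonalisation via $|A|^2=|A^D|^2+\tfrac12(\mathrm{tr}A)^2$, whose spherical coefficient is indeed $\tfrac12 B(1+\nu)+\tfrac32 a_1+a_0$; the closed forms $B(1-\nu)=\tfrac{t^3\mu}{6}$ and $B(1+\nu)=\tfrac{t^3\mu(2\mu+3\lambda)}{6(2\mu+\lambda)}$ together with the joint monotonicity of $\mu x/(x+4\mu)$ in $(\mu,x)$, which is exactly where \eqref{eq:M3-7} enters; and, for the sixth-order tensor, the identity $\mathbb{Q}A\cdot A=(b_0-3b_1)|v|^2+5b_1|A|^2$ from \eqref{eq:M5-2}, the orthogonality $A^{\mathrm{sph}}\cdot\widetilde A=0$ with $|A^{\mathrm{sph}}|^2=\tfrac34|v|^2$, and the resulting positive-definite form $5b_1|\widetilde A|^2+(b_0+\tfrac34 b_1)|v|^2$ with lower bounds $\tfrac{\alpha_0}{6}t^3l^2$ from \eqref{eq:M5-1} and \eqref{eq:M4-1}. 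All constants obtained depend only on $\alpha_0$ and $\gamma_0$, as the statement requires, and your closing remark correctly identifies the only genuinely delicate point (the sign of $b_0-3b_1$ and of $\nu$, handled by the two orthogonal splittings and the second ellipticity hypothesis).
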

In order to introduce the variational formulation of the Neumann problem \eqref{eq:M2-1}--\eqref{eq:M2-4}, we need to derive an expression of the derivatives with respect to the Cartesian variables at boundary points in terms of the derivatives with respect to local variables.
We shall need the following lemma,  whose proof is postponed in the Appendix.

\begin{lem}
	\label{Lemma:Fichera}
	Let $\Omega$ be a bounded domain in $\R^2$ of $C^2$ class and let $w\in H^3(\Omega)$. 
The following change of variables formulas hold on $\partial\Omega$: 
\begin{eqnarray}\label{eq:change_var_first}
w,_\beta=w,_n n_\beta + w,_s \tau_\beta\  \ \ \mbox{a.e. on } \partial\Omega,
\end{eqnarray}
\begin{eqnarray}
	\label{eq:change_var_secondBIS}
&&w,_{\alpha\beta}=w,_{ss}\tau_\alpha\tau_\beta+w,_{nn}n_\alpha n_\beta+
w,_{sn}(\tau_\alpha n_\beta+\tau_\beta n_\alpha) +\nonumber\\
&&+w,_{s}(\tau_\beta \tau_\alpha,_s-n_\beta n_\alpha,_s)+w,_{n}\tau_\beta n_\alpha,_s \  \ \ \mbox{a.e. on } \partial\Omega.
\end{eqnarray}
\end{lem}

We are now in position to deduce the weak formulation of the problem  \eqref{eq:M2-1}--\eqref{eq:M2-4}.
By multiplying equation \eqref{eq:M2-1} by a test function $w \in H^3(\Omega)$ and integrating by parts three times we get 
\begin{eqnarray}\label{eq:derivingvar_1}
&&\int_{\Omega} -M_{\alpha\beta}(u) w_{,\alpha\beta} + \overline{M}_{\alpha \beta \gamma}^h(u)w,_{\alpha \beta \gamma}= 
\int_{\partial \Omega} ( M_{\alpha\beta}(u)  + \overline{M}_{\alpha \beta \gamma,\gamma}^h(u))_{,\alpha} n_{\beta} w - \ \ \ \ \ \ \ \ \ \nonumber \\ 
&& -\int_{\partial \Omega} ( M_{\alpha\beta}(u)  + \overline{M}_{\alpha \beta \gamma,\gamma}^h(u)) n_{\alpha} w_{,\beta} + \int_{\partial \Omega}  \overline{M}_{\alpha \beta \gamma}^h(u)n_{\gamma}w_{,\alpha\beta}.
\end{eqnarray}
By using formulas \eqref{eq:change_var_first} and \eqref{eq:change_var_secondBIS} in the second and third boundary integral on the right hand side of \eqref{eq:derivingvar_1} respectively, we end up with
\begin{eqnarray*}
&&\int_{\Omega} -M_{\alpha\beta}(u) w_{,\alpha\beta} + \overline{M}_{\alpha \beta \gamma}^h(u)w,_{\alpha \beta \gamma}= \\
&&=\int_{\partial\Omega} ( M_{\alpha\beta}(u)  + \overline{M}_{\alpha \beta \gamma,\gamma}^h(u))_{,\alpha} n_{\beta}w +\\
&&+\int_{\partial\Omega}[ ( M_{\alpha\beta}(u)  + \overline{M}_{\alpha \beta \gamma,\gamma}^h(u))n_{\alpha}\tau_{\beta} + (\overline{M}_{\alpha \beta \gamma}^h(u)n_{\gamma}\tau_{\alpha}\tau_{\beta})_{,s} - \overline{M}_{\alpha \beta \gamma}^h(u)n_{\gamma}(\tau_{{\alpha}_{,s}} \tau_{\beta} -n_{{\alpha}_{,s}} n_{\beta}) ]_{,s} w +\\
&&+\int_{\partial \Omega} \{- ( M_{\alpha\beta}(u)  + \overline{M}_{\alpha \beta \gamma,\gamma}^h(u)) n_{\alpha} n_{\beta}   + \overline{M}_{\alpha \beta \gamma}^h(u))n_{\gamma}n_{\alpha,s}\tau_{\beta} -[\overline{M}_{\alpha \beta \gamma}^h(u)) n_{\gamma}(\tau_{	\alpha}n_{\beta} + \tau_{\beta}n_{\alpha})]_{,s}\}w_{,n}+\\
&&+\int_{\partial\Omega} \overline{M}_{\alpha \beta \gamma}^h(u))n_{\alpha}n_{\beta} n_{\gamma} w_{,nn}\ .
%(t_{\alpha} n_{\beta} +t_{\beta}}]\}
\end{eqnarray*}
Hence taking into account the boundary conditions  \eqref{eq:M2-2}--\eqref{eq:M2-4}, the latter implies that 
\begin{equation}\label{eq:derivingvar_2}
\int_{\Omega} -M_{\alpha\beta}(u) w_{,\alpha\beta} + \overline{M}_{\alpha \beta \gamma}^h(u)w,_{\alpha \beta \gamma}= 
-\int_{\partial \Omega} \widehat{V} w + \widehat{M}_n w,_{n} + \widehat{M}^h_n w,_{nn}. 
\end{equation}
By expressing the derivative  $w_{,n}$ and $w_{,nn}$ at the boundary with respect to Cartesian coordinates, we can rewrite \eqref{eq:derivingvar_2} as follows 
\begin{equation}\label{eq:derivingvar_3}
\int_{\Omega} -M_{\alpha\beta}(u) w_{,\alpha\beta} + \overline{M}_{\alpha \beta \gamma}^h(u)w,_{\alpha \beta \gamma}= -
\int_{\partial \Omega} \widehat{V} w + \widehat{M}_n n_{\alpha}w,_{\alpha} + \widehat{M}^h_n n_{\alpha}n_{\beta} w,_{\alpha\beta}.
\end{equation}
Choosing as test function $w=1, \ w=x_1, \ w=x_2$ in \eqref{eq:derivingvar_3}, we obtain the following three compatibility conditions 
\begin{eqnarray}\label{eq:1:comp}
\int_{\partial \Omega} \widehat{V} =0\ , \  \int_{\partial \Omega}  \widehat{V} x_1 +  \widehat{M}_n n_1=0\ , \ \int_{\partial \Omega}  \widehat{V} x_2 +  \widehat{M}_n n_2=0.
\end{eqnarray}

Let us denote 
\begin{eqnarray}\label{bilinear}
a(u,w)=\int_{\Omega} -M_{\alpha\beta}(u)w,_{\alpha \beta} + \overline{M}_{\alpha \beta \gamma}^h(u)w,_{\alpha \beta \gamma}, 
\end{eqnarray}
\begin{eqnarray}\label{functionalL}
L(w)=-\int_{\partial \Omega}  \widehat{V} w + \widehat{M}_n w,_{n} + \widehat{M}^h_n w,_{nn},
\end{eqnarray}
\begin{eqnarray}\label{functionalLtilde}
\tilde{L}(w)=-
\int_{\partial \Omega} \widehat{V} w + \widehat{M}_n n_{\alpha}w,_{\alpha} + \widehat{M}^h_n n_{\alpha}n_{\beta} w,_{\alpha\beta}.
\end{eqnarray}

\noindent The variational formulation of the Neumann problem \eqref{eq:M2-1}--\eqref{eq:M2-4} is as follows.

\begin{definition}(Weak formulation of the Neumann problem)

A function $w\in H^3(\Omega)$ satisfying 
\begin{eqnarray}
a(u,w)=L(w), \ \ \ \mbox{for every} \ w\in H^3(\Omega),
\end{eqnarray}
is called a weak solution to the Neumann problem \eqref{eq:M2-1}--\eqref{eq:M2-4}.

\end{definition}
From this definition it is evident that, given a weak solution $u$, also $u+l$ is a solution, for every affine function $l$.
Therefore, in order to uniquely identify the solution, we assume the following normalization conditions
\begin{eqnarray}\label{eq: normalization}
\int_{\Omega} u=0, \quad \int_{\Omega} u_{,\alpha}=0, \quad \alpha=1,2.
\end{eqnarray}
\begin{prop} [\bf Well-posedness of the Neumann problem]\label{prop:DirectProblem}
Let $\Omega$ be a bounded domain in $\mathbb{R}^2$ with boundary $\partial \Omega$ of class $C^{2,1}$ with constant $r_0,M_0$.
Let the tensors $\mathbb{P}$, $\mathbb{P}^h \in L^\infty( \Omega, \mathcal{L}(\widehat{\M}^2, \widehat{\M}^2   )  )$ and $\mathbb{Q} \in L^\infty( \Omega, \mathcal{L}(\widehat{\M}^3, \widehat{\M}^3   )  )$ be given by \eqref{eq:M3-1}, \eqref{eq:M3-2} and \eqref{eq:M4-4}--\eqref{eq:M5-1} respectively, with Lamé moduli $\lambda$, $\mu$ satisfying \eqref{eq:M3-7}.
Let $\widehat{V} \in H^{ - 5/2  }(\partial \Omega), \quad 
	\widehat{M}_n \in H^{ - 3/2  }(\partial \Omega), \quad \widehat{M}_n^h \in H^{ - 1/2  }(\partial \Omega)$ such that the compatibility conditions \eqref{eq:1:comp} are satisfied. Problem \eqref{eq:M2-1} - \eqref{eq:M2-4} admits a unique weak solution $u \in H^3(\Omega)$ satisfying \eqref{eq: normalization}. Moreover, 
	\begin{equation}
		\label{eq:reg-estim-H3}
	\|u\|_{H^3(\Omega)}\le C \left(\|\widehat{V}\|_{H^{ - 5/2  }(\partial \Omega)} + r_0^{-1}\| \widehat{M}_n\|_{H^{ - 3/2  }(\partial \Omega)} + r_0^{-2}\|  \widehat{M}_n^h\|_{H^{ - 1/2  }(\partial \Omega)} \right)
	\end{equation}
	where $C>0$ only depends on $M_0$, $M_1$, $\frac{t}{r_0}$, $\frac{l}{r_0}$, $\xi_ {\mathbb{P}}, \xi_{\mathbb{Q}}$ (as defined in Lemma \ref{Lemma:M6-1}). Furthermore, any weak solution to problem \eqref{eq:M2-1}--\eqref{eq:M2-4} is of the form $u+e$, where $e$ is an affine function. 
\end{prop}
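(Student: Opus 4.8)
The plan is to apply the Lax--Milgram theorem to the bilinear form $a(\cdot,\cdot)$ defined in \eqref{bilinear} on a suitable closed subspace of $H^3(\Omega)$ on which $a$ is coercive. First I would rewrite $a(u,w)$ using the constitutive equations \eqref{eq:M2-5} and \eqref{eq:M4-3} as
\begin{equation*}
a(u,w)=\int_\Omega (\mathbb{P}+\mathbb{P}^h)D^2u\cdot D^2w + \mathbb{Q}D^3u\cdot D^3w,
\end{equation*}
so that, by the symmetry relations \eqref{eq:M3-3}, \eqref{eq:M3-4}, \eqref{eq:M5-3}, the form $a$ is symmetric, and by the $L^\infty$ bounds on the tensors it is bounded on $H^3(\Omega)\times H^3(\Omega)$. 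The strong convexity estimates \eqref{eq:M6-1}--\eqref{eq:M6-2} of Lemma \ref{Lemma:M6-1} give $a(w,w)\geq c\left(\|D^2w\|_{L^2(\Omega)}^2+\|D^3w\|_{L^2(\Omega)}^2\right)$ with $c$ depending on $t$, $l$, $\xi_{\mathbb{P}}$, $\xi_{\mathbb{Q}}$. This is not coercive on all of $H^3(\Omega)$ because $a$ annihilates affine functions; to fix this I would work on the closed subspace
\begin{equation*}
\mathcal{H}=\left\{w\in H^3(\Omega):\ \int_\Omega w=0,\ \int_\Omega w_{,\alpha}=0,\ \alpha=1,2\right\},
\end{equation*}
which is exactly the set of $H^3$ functions satisfying the normalization \eqref{eq: normalization}.

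The key analytic step is a Poincar\'e-type inequality: there is a constant $C$ depending only on $M_0$, $M_1$ (i.e.\ on the geometry of $\Omega$ through $r_0$) such that $\|w\|_{H^3(\Omega)}\leq C\,\|D^2 w\|_{L^2(\Omega)}$ for all $w\in\mathcal{H}$. This follows from the standard quotient/compactness argument (a second Korn-type or Poincar\'e argument applied twice): if it failed there would be a sequence $w_k\in\mathcal H$ with $\|w_k\|_{H^3}=1$ and $\|D^2w_k\|_{L^2}\to0$; by Rellich $w_k\to w$ in $H^2$ up to subsequence, $D^2w=0$ so $w$ is affine, the constraints pass to the limit forcing $w=0$, while $\|D^3 w_k\|_{L^2}\le\|D^2 w_k\|_{H^1}\to$ (one needs $D^3 w_k\to 0$ too, which comes from $\|D^2 w_k\|_{L^2}\to 0$ combined with the bound on $\|D^3 w_k\|_{L^2}$ and elliptic/interpolation reasoning)—the contradiction with $\|w_k\|_{H^3}=1$ then closes the argument. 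Combined with \eqref{eq:M6-1}--\eqref{eq:M6-2}, this yields coercivity of $a$ on $\mathcal H$. For the linear functional, I would check that $L$ in \eqref{functionalL}, equivalently $\tilde L$ in \eqref{functionalLtilde}, is bounded on $H^3(\Omega)$: the trace theorem maps $H^3(\Omega)\to H^{5/2}(\partial\Omega)\times H^{3/2}(\partial\Omega)\times H^{1/2}(\partial\Omega)$ continuously via $w\mapsto(w|_{\partial\Omega},w_{,n}|_{\partial\Omega},w_{,nn}|_{\partial\Omega})$, and $\widehat V,\widehat M_n,\widehat M_n^h$ lie in the respective dual spaces by \eqref{eq:M5-4}; the duality pairings give precisely the bound
\begin{equation*}
|L(w)|\leq C\left(\|\widehat V\|_{H^{-5/2}(\partial\Omega)}+r_0^{-1}\|\widehat M_n\|_{H^{-3/2}(\partial\Omega)}+r_0^{-2}\|\widehat M_n^h\|_{H^{-1/2}(\partial\Omega)}\right)\|w\|_{H^3(\Omega)}.
\end{equation*}

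Lax--Milgram then produces a unique $u\in\mathcal H$ with $a(u,w)=L(w)$ for all $w\in\mathcal H$; since both $a$ and $L$ vanish on affine functions and $H^3(\Omega)=\mathcal H\oplus\{\text{affine}\}$, the identity $a(u,w)=L(w)$ in fact holds for all $w\in H^3(\Omega)$, so $u$ is a weak solution, and it is the unique one satisfying \eqref{eq: normalization}. The estimate \eqref{eq:reg-estim-H3} follows by testing with $w=u$, using coercivity on the left and the bound on $L$ on the right, then dividing by $\|u\|_{H^3(\Omega)}$. Finally, if $\tilde u$ is any weak solution, then $a(\tilde u - u,w)=0$ for all $w\in H^3(\Omega)$; testing with $w=\tilde u - u$ and using \eqref{eq:M6-1}--\eqref{eq:M6-2} gives $D^2(\tilde u-u)=0$, hence $\tilde u - u$ is affine, proving the last assertion. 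I expect the main obstacle to be the careful proof of the Poincar\'e inequality on $\mathcal H$ with explicit dependence of the constant only on $M_0,M_1$ (and not on finer features of $\partial\Omega$); the compactness argument gives existence of such a constant but tracking its dependence requires either a scaling/covering argument on the $C^{2,1}$ domain or invoking a uniform extension operator for the class of domains with given $r_0,M_0$.
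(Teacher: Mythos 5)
Your overall strategy coincides with the paper's: Lax--Milgram on the normalized subspace $H(\Omega)=\{w\in H^3:\int_\Omega w=\int_\Omega w_{,\alpha}=0\}$, coercivity from Lemma \ref{Lemma:M6-1} plus a Poincar\'e-type inequality, boundedness of $\tilde L$ by trace estimates, extension of the variational identity to all of $H^3(\Omega)$ via the compatibility conditions \eqref{eq:1:comp}, the a priori bound by testing with $u$, and uniqueness up to affine functions from $a(u-v,u-v)=0$.

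There is, however, one genuine error in the step you yourself single out as the key one. The inequality you state, $\|w\|_{H^3(\Omega)}\le C\|D^2w\|_{L^2(\Omega)}$ on $\mathcal H$, is false: the normalization conditions constrain only the affine part of $w$, so nothing prevents $\|D^3w\|_{L^2}/\|D^2w\|_{L^2}$ from being arbitrarily large (take $w_k(x)=k^{-3}w_0(k(x-x_0))$ with $w_0\in C^\infty_0$, corrected by an affine function to lie in $\mathcal H$; then $\|D^3w_k\|_{L^2}\approx k\,\|D^2w_k\|_{L^2}$ up to fixed factors). Correspondingly, your compactness argument cannot be closed: from $\|D^2w_k\|_{L^2}\to0$ and boundedness of $\|D^3w_k\|_{L^2}$ there is no "elliptic/interpolation reasoning" that yields $D^3w_k\to0$ in $L^2$, since interpolating $D^3$ between $D^2$ and $D^4$ would require control of fourth derivatives you do not have. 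The fix is simply to prove the weaker (and correct) inequality actually needed, namely $\|w\|^2_{H^3(\Omega)}\le C\left(r_0^2\int_\Omega|D^2w|^2+r_0^4\int_\Omega|D^3w|^2\right)$ for $w\in\mathcal H$ — this is the second-order Poincar\'e inequality \eqref{eq:Poincare} the paper invokes from \cite[Proposition 3.3]{MRV2007}, and your compactness argument does prove it, since only the terms $\int|w|^2$ and $\int|Dw|^2$ need to be absorbed. Coercivity then follows exactly as you intend, because \eqref{eq:M6-1} \emph{and} \eqref{eq:M6-2} together already give $a(w,w)\ge c\left(\|D^2w\|^2_{L^2}+\|D^3w\|^2_{L^2}\right)$, so the $D^3$ term never needs to be produced from the $D^2$ term. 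With this correction the rest of your argument is sound and matches the paper's.
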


\begin{proof}
We introduce the subspace $H(\Omega)$ of $H^3(\Omega)$ defined by 
\begin{equation}
H(\Omega)=\{v \in H^3(\Omega) : \int_{\Omega}v=0,\quad \int_{\Omega} v,_{\alpha}=0,\quad \alpha=1,2   \}, 
\end{equation} 
endowed with the usual $\|\cdot\|_{H^3(\Omega)}$ norm.
%We need to prove that there exist a unique function $u\in H(\Omega)$ such that 
%\begin{eqnarray}
%a(u,w)=L(w)
%\end{eqnarray}
By the standard  Poincar\'{e} inequality (see for instance \cite[Proposition 3.3]{MRV2007}) we have that 
\begin{equation}\label{eq:Poincare}
{r_0}^2\int_{\Omega} |{D}^2 v |^2 + {r_0}^4\int_{\Omega} |{D}^3 v |^2 \le \| v\|_{H^3(\Omega)}^2 \le C\left(  {r_0}^2\int_{\Omega} |{D}^2 v |^2 + {r_0}^4\int_{\Omega} |{D}^3 v |^2 \right)
\end{equation}
where $C>0$ is a constant only depending on $M_0,M_1$. 

We consider the continuous bilinear form 
\begin{eqnarray}
a: H(\Omega)\times H(\Omega) \rightarrow \mathbb{R},
\end{eqnarray}
where $a$ is defined in \eqref{bilinear}. 
By Lemma \ref{Lemma:M6-1} and \eqref{eq:Poincare} we have that, for every $w\in H(\Omega)$, 
\begin{equation}\label{coercive}
a(w,w)\ge C \left(r_0^3\int_{\Omega} |D^2 w|^2 + r_0^5\int_{\Omega}|D^3 w|^2\right)\ge Cr_0 \|w\|_{H^3(\Omega)}^2,
\end{equation}
with $C>0$ only depending on $\frac{t}{r_0}$, $\frac{l}{r_0}$, $\xi_ {\mathbb{P}}, \xi_{\mathbb{Q}}, M_0,M_1$.
Hence, we may infer that the bilinear form $a$ is coercive. 
By standard trace inequalities we have that for any $w\in H(\Omega)$ 
\begin{equation}\label{continuousfunctional}
|\tilde{L}(w)|\le Cr_0\left(\|\widehat{V}\|_{H^{-5/2}(\partial\Omega)} + r_0^{-1}\|\widehat{M}_n\|_{H^{-3/2}(\partial\Omega)} +  r_0^{-2}\|\widehat{M}^h_n\|_{H^{-1/2}(\partial\Omega)} \right)\|w\|_{H^3(\Omega)},
\end{equation}
with $C>0$ only depending on $M_0$, $M_1$. {}From the latter we deduce that $\tilde{L}$ is a continuous functional on $H(\Omega)$. 
By the Lax-Milgram Theorem, we can infer that there exists a unique $u\in H(\Omega)$ such that 
\begin{equation}\label{LM}
a(u,w)=\tilde{L}(w), \quad \mbox{for any} \ w\in H(\Omega).
\end{equation}
Given any $g\in H^3(\Omega)$, there exists $w\in H(\Omega)$ and an affine function $e(x_1,x_2)=a +bx_1 +cx_2$ such that 
\begin{eqnarray}
g(x_1,x_2)=w(x_1,x_2) +a +bx_1 +cx_2 \ .
\end{eqnarray}
By using the compatibility conditions \eqref {eq:1:comp}, \eqref{LM} extends to every test function $g\in H^3(\Omega)$, that is $u$ is the desired weak solution to problem \eqref{eq:M2-1}--\eqref{eq:M2-4}.
By using the weak solution $u$ as test function in \eqref{LM}  and  combining \eqref{coercive}, \eqref{continuousfunctional} we get 
 \eqref{eq:reg-estim-H3}.
Finally, let us assume that $v\in H^3(\Omega)$ is a weak solution to \eqref{eq:M2-1}--\eqref{eq:M2-4}. Hence $a(u-v,w)=0,\ \  \mbox{for any}\ \ w\in H^3(\Omega)$
and choosing as test function $w=u-v$ and by Lemma \ref{Lemma:M6-1} we deduce that $\|D^2(u-v)\|_{L^2(\Omega)}=0$, meaning that $u-v$ is an affine function a.e. in $\Omega$. 
\end{proof}
\subsection{Advanced regularity}
\label{subsec:regularity}

We conclude Section $3$ with a global regularity result. 
\begin{theo}[\bf Global $H^4$-regularity]
	\label{prop:glo-reg-res-exProp8-2}
	Let $\Omega$ be a bounded domain in $\R^2$ with boundary
	$\partial \Omega$ of class $C^{3,1}$ with constants $r_0$, $M_0$, and satisfying \eqref{eq:M1-1}. Let $u \in H^3(\Omega)$ be the weak solution of the Neumann problem
	\eqref{eq:M2-1}--\eqref{eq:M2-4} satisfying \eqref{eq: normalization}, where $\widehat{V} \in H^{ - 3/2  }(\partial \Omega), \quad 
	\widehat{M}_n \in H^{ - 1/2  }(\partial \Omega), \quad \widehat{M}_n^h \in H^{  1/2  }(\partial \Omega)$ are such that the compatibility conditions \eqref{eq:1:comp} are satisfied. Assume that $\mathbb P$, $\mathbb P^h$ defined in \eqref{eq:M3-1}, \eqref{eq:M3-2} are 
	of class $C^{0,1}( \overline{\Omega})$ and satisfy the
	strong convexity condition \eqref{eq:M6-1}. Moreover, let us assume that $\mathbb{Q}$, defined in \eqref{eq:M4-4}, is 
	of class $C^{0,1}( \overline{\Omega})$ and satisfies the
	strong convexity condition \eqref{eq:M6-2}.
	
	Then $u \in H^4(\Omega)$ and
	\begin{equation}
		\label{eq:GR1-1}
		\| u \|_{H^4(\Omega)} \leq C \left(\|\widehat{V}\|_{H^{ - 3/2  }(\partial \Omega)} + r_0^{-1}\| \widehat{M}_n\|_{H^{ - 1/2  }(\partial \Omega)} + r_0^{-2}\|  \widehat{M}_n^h\|_{H^{  1/2  }(\partial \Omega)} \right),
	\end{equation}
	where $C>0$ only depends on $M_0$, $M_1$, $\frac{t}{r_0}$, $\frac{l}{r_0}$, $\xi_{\mathbb Q}$, $\xi_{\mathbb P}$, $\|\mathbb P\|_{C^{0,1}( \overline{\Omega})}$, $\|\mathbb P^h\|_{C^{0,1}( \overline{\Omega})}$, $\|\mathbb Q\|_{C^{0,1}( \overline{\Omega})}$.
\end{theo}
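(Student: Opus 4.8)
The plan is to combine interior elliptic regularity for the sixth-order operator with a boundary regularity argument carried out in the flattened local coordinates supplied by Lemma \ref{Lemma:Fichera}, using the already-established $H^3$ bound \eqref{eq:reg-estim-H3} as the starting estimate. First I would localize: cover $\overline{\Omega}$ by finitely many sets, those contained in $\Omega$ and those centered at boundary points $P\in\partial\Omega$ where, after the rigid transformation of Definition \ref{def:reg_bordo}, $\Omega$ is a subgraph of a $C^{3,1}$ function $g$. In the interior sets one applies the classical Agmon--Douglis--Nirenberg $L^2$-regularity theory for the strongly elliptic sixth-order equation \eqref{eq:M2-1} with Lipschitz coefficients $\mathbb{P},\mathbb{P}^h,\mathbb{Q}$: since $u\in H^3_{loc}$ solves a variational equation whose principal part has $C^{0,1}$ coefficients, a difference-quotient argument gives $u\in H^4_{loc}(\Omega)$ with the interior estimate controlled by $\|u\|_{H^3}$. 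The coercivity \eqref{eq:M6-1}--\eqref{eq:M6-2} from Lemma \ref{Lemma:M6-1} is exactly what makes the G\aa rding inequality and hence the ADN machinery applicable.

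Next I would treat the boundary pieces. Using the change-of-variables map $\varphi$ built in the proof of Lemma \ref{Lemma:Fichera} (or simply the graph map $x\mapsto(x_1,x_2-g(x_1))$), transplant the equation and the three natural boundary conditions \eqref{eq:M2-2}--\eqref{eq:M2-4} into a half-disk. Because $\partial\Omega$ is $C^{3,1}$, the transformed coefficients remain Lipschitz and the transformed system is still a regular elliptic boundary value problem in the sense of ADN (the complementing/Lopatinski condition for this sixth-order plate-type operator with the given Neumann data is the standard one; it can be checked as in the Kirchhoff--Love case, or cited from \cite{KMZ-MMAS-2022}). Tangential difference quotients are admissible test directions since they preserve the boundary conditions, so one gains one tangential derivative: all derivatives $D^4 u$ except possibly $\partial_n^4 u$ (and lower mixed normal derivatives) are in $L^2$ near the boundary. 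Then the equation itself, solved algebraically for the top normal derivative in terms of the already-controlled tangential and mixed derivatives (ellipticity guarantees the coefficient of $\partial_n^6$, hence the relevant leading coefficient after using the lower boundary conditions, is bounded below), recovers the missing normal derivative, giving $u\in H^4$ up to the boundary with the bound expressed in terms of the new, less singular data norms $\widehat V\in H^{-3/2}$, $\widehat M_n\in H^{-1/2}$, $\widehat M_n^h\in H^{1/2}$ appearing on the right-hand side of the boundary conditions.

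Finally I would patch the local estimates together with a partition of unity subordinate to the cover, absorb the resulting lower-order term $\|u\|_{H^3(\Omega)}$ using \eqref{eq:reg-estim-H3} (with the data norms there dominated by the sharper ones in the hypotheses, since $H^{-3/2}\hookrightarrow H^{-5/2}$ etc.), and keep careful track of the scaling in $r_0$ so that the final constant depends only on the stated quantities $\frac{t}{r_0},\frac{l}{r_0},M_0,M_1,\xi_{\mathbb P},\xi_{\mathbb Q}$ and the $C^{0,1}$ norms of the tensors. The main obstacle I anticipate is the boundary analysis: verifying that the three natural conditions \eqref{eq:M2-2}--\eqref{eq:M2-4}, which involve tangential derivatives along $\partial\Omega$ of various orders and the curvature terms from Lemma \ref{Lemma:Fichera}, indeed form a complementing system for the sixth-order operator, and then organizing the tangential-difference-quotient bookkeeping so that the curvature-dependent lower-order terms produced by differentiating the boundary operators are genuinely absorbable. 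Once that structural point is settled, the remaining work is the routine (if lengthy) ADN iteration and scaling.
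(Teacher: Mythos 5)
Your overall architecture (interior difference quotients, covering of $\partial\Omega$ by flattened boundary charts, tangential difference quotients there, recovery of the missing normal derivative, patching and absorbing $\|u\|_{H^3}$ via \eqref{eq:reg-estim-H3}) coincides with the paper's, which proves the theorem by combining Theorem \ref{theo:loc-int-reg-res-exTheo8-3} and Theorem \ref{theo:loc-bndry-reg-res-exTheo8-4}. However, your boundary step contains a genuine gap. You propose to transplant the \emph{strong} boundary conditions \eqref{eq:M2-2}--\eqref{eq:M2-4} to the half-disk and invoke the Agmon--Douglis--Nirenberg theory after verifying the complementing condition. Under the stated hypotheses this route does not go through: the coefficients are only $C^{0,1}$ (so the boundary operators, which involve $\overline{M}^h_{\alpha\beta\gamma,\gamma}$ and hence derivatives of $\mathbb Q$, are not even classically defined), the data lie in negative-order Sobolev spaces, and ADN theory for a sixth-order operator would naturally target $H^6$, not the intermediate gain to $H^4$. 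The paper avoids all of this by never leaving the variational formulation: the natural boundary conditions are encoded in the boundary functional $\mathcal{L}_+$ of \eqref{eq:dato-bordo-trasf}, bounded by $\|\psi\|_{H^2(B_1^+)}$ as in \eqref{eq:controllo-funz-dati-trasf}, and no complementing condition is ever checked — coercivity of $\mathbb K^{(3,3)}$ (Claim A) plays that role.

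The second imprecision is your claim that, once the tangential derivatives of $D^3u$ are controlled, "the equation itself, solved algebraically for the top normal derivative" recovers $\partial_n^4 u$. The equation is sixth order, so it cannot be solved pointwise for a fourth-order derivative; at this stage $u$ is only known to be in $H^3$ plus one tangential derivative. The paper's mechanism is a duality argument: one shows that $v=K^{(3,3)}_{222lmn}u_{,lmn}$ satisfies $\bigl|\int_{B^+_{\sigma_0}} v\,\varphi_{,222}\bigr|\le \mathcal{K}\|\varphi\|_{H^2}$ for all $\varphi\in C^\infty_0$, applies Lemma \ref{lem:agmon} to conclude $v\in H^1$, and only then uses the pointwise ellipticity $K^{(3,3)}_{222222}>0$ to extract $u_{,2222}$ from $v$ and the already-controlled mixed derivatives, as in \eqref{eq:BR21-3}--\eqref{eq:BR21-4}. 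Your proposal needs this weak-form step (or an equivalent one); as written, the algebraic inversion is not available. With the boundary analysis recast variationally and the Agmon-lemma step supplied, the rest of your plan (interior estimate, partition of unity, scaling in $r_0$) matches the paper's proof.
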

The proof of Theorem \ref{prop:glo-reg-res-exProp8-2} is based on the following two results, the proof of which is given at the end of this section.

\begin{theo} [{\bf Interior regularity}]
	\label{theo:loc-int-reg-res-exTheo8-3}
	Let $B_{\sigma}$ be an open ball in $\R^2$ centered
	at the origin and with radius $\sigma$. Let $u \in H^3(B_{\sigma})$ be such that
	\begin{equation}
		\label{eq:IR1-1}
		a(u, \varphi) = 0,		   \qquad \hbox{for every }
		\varphi \in H^3_0(B_{\sigma}),
	\end{equation}
	where
	\begin{equation}
		\label{eq:IR1-2}
		a(u, \varphi) = \int_{B_{\sigma}}  (\mathbb P + \mathbb P^h) D^2u \cdot D^2 \varphi + \mathbb Q D^3 u \cdot D^3 \varphi.
	\end{equation}
 The tensors $\mathbb P, \mathbb P^h \in C^{0,1} (\overline{B_{\sigma}})$, $\mathbb Q \in C^{0,1} (\overline{B_{\sigma}})$ defined in \eqref{eq:M3-1}, \eqref{eq:M3-2}, \eqref{eq:M4-4} satisfy the strong convexity conditions \eqref{eq:M6-1}, \eqref{eq:M6-2}, respectively. 
	
	Then, $u \in H^4( B_{  \frac{\sigma}{2}  })$ and we have
	\begin{equation}
		\label{eq:IR1-4}
		\|u\|_{ H^4 (B_{\frac{\sigma}{2}})} \leq C  \|u\|_{ H^3 (B_{\sigma})},
	\end{equation}
	where $C>0$ only depends on $\frac{t}{r_0}$, $\frac{l}{r_0}$, $\xi_{\mathbb Q}$, $\xi_{\mathbb P}$, $\|\mathbb P\|_{C^{0,1}( \overline{B_\sigma})}$, $\|\mathbb P^h\|_{C^{0,1}( \overline{B_\sigma})}$, $\|\mathbb Q\|_{C^{0,1}( \overline{B_\sigma})}$.
\end{theo}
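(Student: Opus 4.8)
The plan is to run the classical interior difference--quotient argument of Nirenberg, exploiting that the principal part of the bilinear form, namely $\int_{B_\sigma}\mathbb Q D^3u\cdot D^3\varphi$, is a sixth order operator with Lipschitz coefficients which is coercive at the level of third derivatives by the strong convexity \eqref{eq:M6-2}, whereas the $(\mathbb P+\mathbb P^h)$--contribution only carries second derivatives and is therefore genuinely of lower order. (Alternatively one could invoke the $L^2$ interior regularity theory for higher order elliptic equations in variational form, but a self--contained argument is preferable here.)

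I would fix radii $\tfrac\sigma2\le r<r'\le\tfrac34\sigma$ and a cutoff $\eta\in C^\infty_0(B_{r'})$ with $\eta\equiv1$ on $B_r$, $0\le\eta\le1$ and $|D^j\eta|\le C(r'-r)^{-j}$ for $j=1,2,3$; for $k\in\{1,2\}$ and $0<|h|<\tfrac\sigma8$ set $\tau_h v(x)=\big(v(x+he_k)-v(x)\big)/h$. Since $u\in H^3(B_\sigma)$, the function $\varphi=\tau_{-h}\big(\eta^2\,\tau_h u\big)$ lies in $H^3_0(B_\sigma)$ and is admissible in \eqref{eq:IR1-1}. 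Inserting it, using the discrete integration by parts $\int f\,\tau_{-h}g=-\int(\tau_h f)\,g$ and the discrete Leibniz rule, one obtains an identity of the form
\begin{multline*}
\int_{B_\sigma}\big(\mathbb Q(\cdot+he_k)\,\tau_h D^3u\big)\cdot\eta^2\,\tau_h D^3u
=-\int_{B_\sigma}\big(\mathbb Q(\cdot+he_k)\,\tau_h D^3u\big)\cdot R\\
-\int_{B_\sigma}\big((\tau_h\mathbb Q)\,D^3u\big)\cdot D^3(\eta^2\tau_h u)
-\int_{B_\sigma}\tau_h\big((\mathbb P+\mathbb P^h)D^2u\big)\cdot D^2(\eta^2\tau_h u),
\end{multline*}
where $R:=D^3(\eta^2\tau_h u)-\eta^2\,\tau_h D^3u$ collects the commutator terms in which at least one derivative hits $\eta^2$, so that $R$ involves only $\tau_h D^{\le 2}u$ with coefficients bounded by powers of $(r'-r)^{-1}$. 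Since $\tau_hD^3u\in\widehat{\M}^3$ almost everywhere, \eqref{eq:M6-2} bounds the left--hand side below by $t^3l^2\xi_{\mathbb Q}\int_{B_r}|\tau_h D^3u|^2$. On the right, the last integral never produces a factor $\tau_h D^3u$ (its integrand is built from $\tau_h D^{\le 2}u$ only), and is bounded by $C(r'-r)^{-4}\|\mathbb P\|_{C^{0,1}}^2\|u\|^2_{H^3(B_\sigma)}$ using $\|\tau_h D^{j}u\|_{L^2}\le\|D^{j+1}u\|_{L^2}$ for $j\le 2$ and the Lipschitz bound on $\tau_h(\mathbb P+\mathbb P^h)$; the first two integrals produce at most one factor $\tau_h D^3u$, always paired with a term whose $L^2$ norm is controlled by $(r'-r)^{-3}\|u\|_{H^3(B_\sigma)}$ and the $C^{0,1}$ norms of the tensors, so that Young's inequality with a small parameter $\varepsilon$ leaves only the dangerous contribution $\varepsilon\int_{B_{r'}}|\tau_h D^3u|^2$ plus a remainder bounded by $C_\varepsilon(r'-r)^{-6}\big(\|\mathbb Q\|_{C^{0,1}}^2+\|\mathbb P\|_{C^{0,1}}^2\big)\|u\|^2_{H^3(B_\sigma)}$.

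Setting $\Phi(\rho)=\int_{B_\rho}|\tau_h D^3u|^2$ (finite and nondecreasing for each fixed $h$), the previous step gives $\Phi(r)\le\varepsilon\,\Phi(r')+C_\varepsilon(r'-r)^{-6}\|u\|^2_{H^3(B_\sigma)}$ for all $\tfrac\sigma2\le r<r'\le\tfrac34\sigma$, with $\varepsilon$ at our disposal; the standard iteration lemma (see, e.g., the monographs of Giaquinta and Giusti) then yields $\Phi(\tfrac\sigma2)\le C\|u\|^2_{H^3(B_\sigma)}$ with $C$ independent of $h$. Letting $h\to0$ for $k=1,2$ gives $D^3u\in H^1(B_{\sigma/2})$, i.e.\ $u\in H^4(B_{\sigma/2})$, together with $\|D^4u\|_{L^2(B_{\sigma/2})}\le C\|u\|_{H^3(B_\sigma)}$; combined with the trivial control of the lower--order terms of the $H^4$ norm this gives \eqref{eq:IR1-4}, and keeping track of the constants through the coercivity constant of $\mathbb Q$ and the $C^{0,1}$ norms of the tensors produces the stated dependence. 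I expect the main obstacle to be precisely the bookkeeping of the commutator terms: with a single fixed cutoff, the pieces of $R$ in which two or three derivatives hit $\eta^2$ carry factors $(D\eta)^2$ or $D\eta\,D^2\eta$ with no surviving power of $\eta$, and therefore cannot be absorbed into $\int\eta^2|\tau_h D^3u|^2$; running the estimate over the whole family of concentric balls $B_r\subset B_{r'}$ and then invoking the iteration lemma is exactly what makes the absorption legitimate.
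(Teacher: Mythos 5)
Your argument is correct and is, at its core, the same Nirenberg difference--quotient scheme the paper uses: a cutoff, discrete integration by parts, the Lipschitz regularity of the tensors to control the commutators, and the strong convexity \eqref{eq:M6-2} of $\mathbb Q$ to recover $\tau_h D^3u$ in $L^2$. The one genuine divergence is how the ``dangerous'' commutator contributions are closed. You test with $\varphi=\tau_{-h}(\eta^2\tau_h u)$, which forces you to absorb a term $\varepsilon\int_{B_{r'}}|\tau_h D^3u|^2$ living outside the region where $\eta\equiv1$, and you correctly resolve this by running the estimate over a family of radii and invoking the Giaquinta--Giusti iteration lemma. The paper instead applies the difference quotient to $\vartheta u$ and first proves, for \emph{arbitrary} $\varphi\in H^3_0(B_1)$, the identity $a(\tau_{\alpha,s}(\vartheta u),\varphi)=-a(u,\vartheta\tau_{\alpha,-s}\varphi)+r$ with $|r|\le C\|u\|_{H^3(B_1)}\|D^3\varphi\|_{L^2(B_1)}$; since $\vartheta\tau_{\alpha,-s}\varphi$ is itself an admissible test function, the main term vanishes and the whole right-hand side is \emph{linear} in $\|D^3\varphi\|$. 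Substituting $\varphi=\tau_{\alpha,s}(\vartheta u)$ and using coercivity (both quadratic forms being nonnegative) then gives a quadratic inequality $\|D^3\tau_{\alpha,s}(\vartheta u)\|^2\le C\|D^3\tau_{\alpha,s}(\vartheta u)\|\,\|u\|_{H^3}$ that closes immediately, with a single fixed cutoff and no iteration over radii. Your route is slightly heavier but standard and fully rigorous; the paper's rearrangement buys a shorter endgame, while yours is perhaps closer to the textbook second-order template and generalizes without modification to situations where the lower-order form is not sign-definite.
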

\begin{theo} [{\bf Boundary regularity}]
	\label{theo:loc-bndry-reg-res-exTheo8-4}
	Let us denote by $B_{\sigma}^+$ the hemidisk
	$\{(x_1,x_2) \in \R^2 | \ x_1^2+x_2^2 < \sigma^2, \ x_2>0 \}$ and
	let $\Gamma_{\sigma}= \{(x_1,x_2) \in \R^2 | \ -\sigma<x_1<\sigma, \ x_2=0
	\}$, $\Gamma_{\sigma}^+ = \partial B_{\sigma}^+ \setminus
	\Gamma_{\sigma}$. Let $u \in H^3(B_{\sigma}^+)$ be such that
	\begin{equation}
		\label{eq:BR1-1}
		a_+(u, \varphi) = \textit{l}_+(\varphi),  \qquad \hbox{for every }
		\varphi \in H^3_{ \Gamma_{\sigma}^+}  (B_{\sigma}^+),
	\end{equation}
	where $H^3_{ \Gamma_{\sigma}^+}  (B_{\sigma}^+)= \{ g \in H^3
	(B_{\sigma}^+) | \  g=0, \ \frac{\partial g}{\partial n}=0, \ \frac{\partial^2 g}{\partial n^2}=0 \ on \
	\Gamma_{\sigma}^+ \}$,
	\begin{equation}
		\label{eq:BR1-2}
		a_+(u, \varphi) = a_+^{\mathbb E}(u, \varphi) +  a_+^{\mathbb K}(u, \varphi),
	\end{equation}
	\begin{equation}
		\label{eq:BR1-3}
		a_+^{\mathbb E}(u, \varphi) =
		\int_{B_1^+} \sum_{i,j=1}^2 \mathbb E^{(i,j)} D^i u \cdot D^j \varphi,
	\end{equation}
	\begin{equation}
		\label{eq:BR1-4}
		a_+^{\mathbb K}(u, \varphi) =
		\int_{B_1^+} \sum_{i,j=1}^3 \mathbb K^{(i,j)} D^i u \cdot D^j \varphi,
	\end{equation}
	and $\textit{l}_+(\cdot)$ is a continuous functional on $H^3_{ \Gamma_{\sigma}^+}  (B_{\sigma}^+)$ such that
	\begin{equation}
		\label{eq:l+}
		| \textit{l}_+(\varphi)| \leq G\| \varphi \|_{ H^2 (B_\sigma^+)   }
		\qquad \hbox{for every }
		\varphi \in H^3_{ \Gamma_{\sigma}^+}  (B_{\sigma}^+),
	\end{equation}
	where $G$ is a positive constant and $ \mathbb E^{(i,j)} \in \mathcal{L}( \mathbb{M}^i, \mathbb{M}^j )$, $i,j=1,2$ ($\mathbb{M}^1 \equiv \R^2$), $ \mathbb K^{(i,j)} \in \mathcal{L}( \mathbb{M}^i, \mathbb{M}^j )$, $i,j=1,2,3$, $f \in L^2(B_{\sigma}^+)$. Let the tensor fields  $ \mathbb E^{(i,j)} $, $\mathbb K^{(i,j)} $ be of $C^{0,1}$ class in $\overline{B_{\sigma}^+}$ satisfying
	\begin{equation}
		\label{eq:BR1-6}
		\sum_{i,j=1}^2 \sigma^{4-(i+j)} \| \mathbb E^{(i,j)}\|_{C^{0,1} (\overline{B_{\sigma}^+)  } }\leq E,
	\end{equation}
	\begin{equation}
		\label{eq:BR2-1}
		\sum_{i,j=1}^3 \sigma^{6-(i+j)} \| \mathbb K^{(i,j)}\|_{C^{0,1} (\overline{B_{\sigma}^+)  } }\leq K,
	\end{equation}
	for some positive constants $E$, $K$. Moreover, let $\mathbb E^{(2,2)}$ and $\mathbb K^{(3,3)}$ satisfy the symmetry conditions \eqref{eq:M3-3} and \eqref{eq:M5-3}, respectively, and the strong convexity conditions  $\mathbb E^{(2,2)} D^2 w \cdot  D^2 w \geq \xi_{\mathbb{E}} 
	|D^2 w |^2$, 
	$\mathbb K^{(3,3)}D^3 w \cdot D^3 w \geq \xi_{\mathbb{K}} 
	|D^3 w |^2$  in $B_{\sigma}^+$, for every $w \in H^3(B_{\sigma}^+)$, where $\xi_{\mathbb{E}}$, $\xi_{ \mathbb{K}}$ are positive constants.
		
	Then $u \in H^4( B_{ \frac{\sigma}{2}  }^+)$ and we have
	\begin{equation}
		\label{eq:BR2-2}
		\|u\|_{ H^4 (B_{\frac{\sigma}{2}}^+)} \leq C \left (
		G + \|u\|_{ H^3 (B_{\sigma}^+)} \right ),
	\end{equation}
	where $C>0$ only depends on $\xi_\mathbb{K}$, $\xi_{\mathbb{E}}$, $E$, $K$.
\end{theo}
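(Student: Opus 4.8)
The plan is to prove the estimate by the classical method of difference quotients taken in directions \emph{tangential} to the flat boundary $\Gamma_\sigma$, i.e. in the $x_1$-direction, combined with an algebraic bootstrap that recovers the purely normal derivative $\partial^4 u/\partial x_2^4$ from the equation itself. First I would fix a cutoff $\eta \in C^\infty_c(B_\sigma)$ with $\eta \equiv 1$ on $B_{\sigma/2}$ and $0 \le \eta \le 1$, and, for small $h$, consider the tangential difference quotient $\tau_h^1 v (x) = (v(x+he_1)-v(x))/h$. The crucial point is that if $\varphi \in H^3_{\Gamma_\sigma^+}(B_\sigma^+)$ then so is $\tau_{-h}^1(\eta^2 \tau_h^1 u)$ for $h$ small (translation in the $e_1$-direction preserves the half-disk near the flat part and preserves the vanishing of $u, \partial_n u, \partial_n^2 u$ on $\Gamma_\sigma^+$), so it is an admissible test function in \eqref{eq:BR1-1}. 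Plugging it in and using the standard "integration by parts" rule for difference quotients, $\int f\, \tau_{-h}^1 g = -\int (\tau_h^1 f)\, g$, I obtain an identity of the form $a_+(\tau_h^1 u, \eta^2 \tau_h^1 u) = (\text{commutator terms involving } \tau_h^1 \text{ of the coefficients}) + \ell_+(\tau_{-h}^1(\eta^2 \tau_h^1 u))$.

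Next I would estimate each side. On the left, the strong convexity hypotheses $\mathbb E^{(2,2)}D^2w\cdot D^2 w \ge \xi_{\mathbb E}|D^2 w|^2$ and $\mathbb K^{(3,3)}D^3 w\cdot D^3 w \ge \xi_{\mathbb K}|D^3 w|^2$ give, after absorbing the lower-order blocks $\mathbb E^{(i,j)}$, $\mathbb K^{(i,j)}$ with $(i,j)\neq(3,3)$ by Young's inequality (using the Lipschitz bounds \eqref{eq:BR1-6}, \eqref{eq:BR2-1}), a lower bound $\ge c\,\xi_{\mathbb K}\int \eta^2 |D^3 \tau_h^1 u|^2 - C\int_{B_\sigma^+}|D^3 u|^2 - \dots$. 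On the right, the commutator terms are controlled by $\|\mathbb E\|_{C^{0,1}},\|\mathbb K\|_{C^{0,1}}$ times $\|u\|_{H^3(B_\sigma^+)}\cdot \|\eta D^3\tau_h^1 u\|_{L^2}$, and the functional term is bounded by $G\|\tau_{-h}^1(\eta^2\tau_h^1 u)\|_{H^2(B_\sigma^+)} \le C G\,\|\eta\, \tau_h^1 u\|_{H^3(B_\sigma^+)}$ by the basic difference-quotient bound $\|\tau_{-h}^1 w\|_{L^2}\le \|\partial_1 w\|_{L^2}$. After a further application of Young's inequality to absorb the top-order factor on the left, I conclude $\int \eta^2 |D^3\tau_h^1 u|^2 \le C(G^2 + \|u\|_{H^3(B_\sigma^+)}^2)$ uniformly in $h$, hence $\partial_1 u \in H^3(B_{\sigma/2}^+)$ with the corresponding bound. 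This controls all fourth derivatives $D^\alpha u$ with $\alpha_2 \le 3$ (at least one tangential derivative).

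The remaining obstacle — and I expect this to be the genuinely delicate step — is recovering $\partial_2^4 u$, for which no tangential difference quotient is available. Here I would return to the distributional form of the equation: from \eqref{eq:BR1-1}, testing against $\varphi \in C^\infty_c(B_\sigma^+)$ shows that $u$ solves, in the interior, an equation whose top-order part is $(\mathbb K^{(3,3)} D^3 u)$-divergence of order $6$; but since we only want $H^4$, I instead use that $(\mathbb E + \mathbb K)$-structure yields, after the tangential regularity just obtained, an identity expressing $\mathbb K^{(3,3)}_{2222\cdots}\partial_2^4 u$ (the coefficient of which is $\ge \xi_{\mathbb K}>0$ by strong convexity applied to $w$ with $D^3 w = e_2\otimes e_2\otimes e_2$) as a combination of fourth derivatives containing at least one $\partial_1$ — all already known to be in $L^2_{loc}$ — plus lower-order terms and the $L^2$ data coming from $\ell_+$. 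Solving this scalar relation pointwise for $\partial_2^4 u$ and invoking \eqref{eq:BR2-1} for the Lipschitz bound on the coefficient gives $\partial_2^4 u \in L^2(B_{\sigma/2}^+)$ with the stated estimate. Collecting the tangential and normal bounds yields $u \in H^4(B_{\sigma/2}^+)$ and \eqref{eq:BR2-2}. A routine iteration/covering over a chain of shrinking half-balls, or a single cutoff with the dependence of $C$ tracked through \eqref{eq:BR1-6}--\eqref{eq:BR2-1}, gives the asserted dependence of $C$ on $\xi_{\mathbb K}, \xi_{\mathbb E}, E, K$ only.
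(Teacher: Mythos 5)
Your tangential step coincides with the paper's: a cutoff, difference quotients $\tau_{1,s}$ in the $x_1$-direction (which indeed preserve the space $H^3_{\Gamma_\sigma^+}$ near the flat boundary), coercivity of $\mathbb K^{(3,3)}$ with the lower-order blocks absorbed by Young's inequality, and the bound $|\ell_+(\tau_{1,-s}(\cdot))|\leq CG\|\cdot\|_{H^3}$. This correctly yields $\|\partial_1 D^3 u\|_{L^2(B_\rho^+)}\leq C(G+\|u\|_{H^3(B_1^+)})$, i.e.\ all fourth derivatives with at least one tangential differentiation.

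The gap is in your treatment of $\partial_2^4 u$. You assert that the equation yields ``an identity expressing $\mathbb K^{(3,3)}_{222222}\partial_2^4u$ as a combination of fourth derivatives containing at least one $\partial_1$ plus lower-order terms,'' to be ``solved pointwise.'' No such pointwise fourth-order relation exists: the strong form of the equation is of \emph{sixth} order, and the weak formulation only pairs $D^3u$ against $D^3\varphi$. What one actually extracts, after integrating by parts once in $x_1$ every term $K^{(3,3)}_{ij1lmn}u_{,lmn}\varphi_{,ij1}$ (using the tangential regularity just established) and estimating the remaining blocks, is the \emph{dual} estimate
\begin{equation*}
\Bigl|\int_{B_{\sigma_0}^+} v\,\varphi_{,222}\Bigr|\leq \mathcal{K}\,\|\varphi\|_{H^2(B_{\sigma_0}^+)},\qquad v:=K^{(3,3)}_{222lmn}u_{,lmn},
\end{equation*}
for all $\varphi\in C_0^\infty(B_{\sigma_0}^+)$, i.e.\ $\partial_2^3 v\in H^{-2}$. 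Passing from this, together with $v,\partial_1 v\in L^2$, to $v\in H^1$ near $\Gamma$ is precisely the non-trivial step; the paper invokes Agmon's Lemma $9.3$ (Lemma \ref{lem:agmon}) for it, and some such device (tangential mollification/Fourier analysis in $x_1$) is unavoidable. Only \emph{after} $v\in H^1$ is known does the algebraic solve enter: one writes $u_{,222}=(K^{(3,3)}_{222222})^{-1}(v-\sum_{(l,m,n)\neq(2,2,2)}K^{(3,3)}_{222lmn}u_{,lmn})$, using $K^{(3,3)}_{222222}\geq\xi_{\mathbb K}>0$, and differentiates in $x_2$. So the architecture of your proof is the paper's, but the one genuinely delicate ingredient is missing and is replaced by a claim that, as stated, is false.
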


\begin{proof}[Proof of Theorem \ref{prop:glo-reg-res-exProp8-2}]

Without loss of generality we can assume $r_0=1$.

By the regularity of $\partial \Omega$, we can construct a finite
collection of open sets $\Omega_0$, $\widetilde{\Omega_1}$, ..., $\widetilde{\Omega_N}$ and, for every $j$, $j=1,...,N$, a homeomorphism ${\cal
	{T}}_{(j)}$ of $C^{3,1}$ class which maps $\Omega_j= \widetilde{\Omega_j}  \cap \Omega $ into
$B_{1}^+$, $ \overline{\Omega_j} \cap \partial \Omega$ into
$\Gamma_1$ and $\partial \Omega_j \cap \Omega$ into $\Gamma_1^+$,
such that $\Omega = \Omega_0 \cup \left ( \cup_{j=1}^N {\cal
	{T}}_{(j)}^{-1}(B_{ \frac{1}{2}}^+) \right )$, $\Omega_0 \subset
\Omega_{\delta_0}$, where $\delta_0>0$ only depends on $M_0$. Note that here we have used the notation introduced in Theorem \ref{theo:loc-bndry-reg-res-exTheo8-4} for $\Gamma_1$ and $\Gamma_1^+$. By the regularity of $\partial
\Omega$ and \eqref{eq:M1-1}, the number $N$ is controlled by a
constant only depending on $M_0$ and $M_1$.

The set $\Omega_0$ can be covered by a finite number of balls contained
in $\Omega$. Therefore, using the local interior regularity result of
Theorem \ref{theo:loc-int-reg-res-exTheo8-3}, we have that $u \in H^4
(\Omega_0)$ and
\begin{equation}
	\label{eq:GR2-1}
	\|u\|_{ H^4 (\Omega_0)} \leq C \|u\|_{ H^3 (\Omega)},
\end{equation}
where $C>0$ only depends on $t$, $l$, $M_1$,  $\xi_{\mathbb Q}$, $\xi_{\mathbb P}$, $\|\mathbb P\|_{C^{0,1}( \overline{\Omega})}$, $\|\mathbb P^h\|_{C^{0,1}( \overline{\Omega})}$, $\|\mathbb Q\|_{C^{0,1}( \overline{\Omega})}$.

We now fix $j$, $1 \leq j \leq N$, and we determine an
estimate analogous to \eqref{eq:GR2-1} in $\Omega_j$.

Let us define
\begin{equation}
	\label{eq:GR2-2}
	H^3_{ \partial \Omega_j \cap \Omega   } (\Omega_j)
	=
	\left \{
	f \in H^3(\Omega_j) | \ h=h_{,n}=h_{,nn}=0 \ \hbox{on} \ \partial \Omega_j \cap \Omega
	\right \}.
\end{equation}
The function $u\in H^3(\Omega)$, solution of \eqref{eq:M2-1}--\eqref{eq:M2-4}, satisfies 
\begin{equation}
	\label{eq:GR3-1}
	\int_{\Omega_j}  (\mathbb P + \mathbb P^h) D^2u \cdot D^2 \varphi + \mathbb Q D^3 u \cdot D^3 \varphi = 
	L_+(\varphi) \quad \hbox{for every } \varphi \in H^3_{ \partial \Omega_j \cap \Omega   } (\Omega_j),
\end{equation}
where
\begin{equation}
	\label{eq:funz-dati-bordo+}
	L_+(\varphi)
	=
	-\int_{ \overline{\Omega_j} \cup \partial \Omega   }  \widehat{V} \varphi + \widehat{M}_n \varphi,_{n} + \widehat{M}^h_n \varphi,_{nn}.
\end{equation}
Hereinafter, to simplify the notation, we denote ${\cal
	{T}}_{(j)}$ by $\cal{T}$, and we define
\begin{equation}
	\label{eq:GR3-2}
	y= {\cal T} (x), \qquad y \in B_{1}^+,
\end{equation}
\begin{equation}
	\label{eq:GR3-3}
	x= {\cal T}^{-1} (y), \qquad x \in \Omega_j,
\end{equation}
\begin{equation}
	\label{eq:GR3-4}
	v(y) = u ( {\cal T}^{-1} (y)),
\end{equation}
\begin{equation}
	\label{eq:GR3-5}
	\psi(y) = \varphi({\cal T}^{-1} (y)), \qquad \psi \in H_{\Gamma_1^+}^3(B_1^+).
\end{equation}
By changing the variables in \eqref{eq:GR3-1} according to \eqref{eq:GR3-2}, the function $v$ belongs to $H^3(B_1^+)$ and satisfies
\begin{multline}
	\label{eq:GR3-7}
	\int_{B_1^+} \sum_{i,j=1}^2 \mathbb E^{(i,j)} D^i v \cdot D^j \psi +
	\sum_{i,j=1}^3 \mathbb K^{(i,j)} D^i v \cdot D^j \psi
	=
	 \mathcal{L}_+ ( \psi) \quad \quad {\mbox{for every}} \  \psi\in H^3_{ \Gamma_1^+} (B_1^+),
\end{multline}
where
\begin{multline}
	\label{eq:dato-bordo-trasf}
	\mathcal{L}_+ (\psi) =
	\\
	=
	-
	\int_{\Gamma_1}
	\left ( 
	\widehat{\mathcal{V}}\psi + \widehat{\mathcal{M}}_n S^T D \psi \cdot S^T \nu | S^{-T} n| +
	\widehat{\mathcal{M}}_n^h
	\left (  R D \psi +S^T D^2 \psi S    \right )S^T \nu \cdot S^T \nu | S^{-T} n|^2
	\right )
	|S \tau|^{-1},
\end{multline}
with
\begin{equation}
	\label{eq:dato-bdry-V}
	\widehat{\mathcal{V}}(y) = \widehat{V}({\cal T}^{-1} (y)),
\end{equation}
\begin{equation}
	\label{eq:dato-bdry-Mn}
	\widehat{\mathcal{M}}_n= \widehat{{M}}_n({\cal T}^{-1} (y)),
\end{equation}
\begin{equation}
	\label{eq:dato-bdry-Mnh}
	\widehat{\mathcal{M}}_n^h = \widehat{{M}}_n^h ( {\cal T}^{-1} (y))
\end{equation}
and
\begin{equation}
	\label{eq:GR4-3_e_4-5}
	S_{kr} = \frac{\partial {\cal T}_k}{\partial x_r}, \qquad
	R_{ksr} = \frac{\partial^2  {\cal T}_{k}  }{\partial x_s \partial
		x_r}.
\end{equation}
Here, $\nu$ is the outer unit normal to $B_1^+$. As shown in \cite{MRV2007} (Proposition $8.2$), the expressions of the tensors $\mathbb E^{(i,j)}\in \mathcal{L}( \mathbb{M}^i, \mathbb{M}^j )$, $i,j=1,2$, ($\mathbb{M}^1 \equiv \R^2$) can be deduced passing to Cartesian coordinates, namely
\begin{multline}
	\label{eq:GR5-2}
	\sum_{i,j=1}^2 \mathbb E^{(i,j)} D^i v \cdot D^j \psi
	=
	\\
	=
	(P_{ijrs}+P_{ijrs}^h) 
	(S_{kr}S_{ls}v_{,kl}+R_{krs}v_{,k})
	(S_{mi}S_{nj}\psi_{,mn}+R_{nij}\psi_{,n})  \left | \det S   \right |^{-1} .
\end{multline}
By the regularity assumptions on $\mathbb P$, $\mathbb P^h$ and the regularity of the boundary $\partial \Omega$, the tensors $\mathbb{E}^{(i,j)}$ belong to $C^{0,1}(\overline{B_1^+})$, $i,j=1,2$. Moreover, by the properties of $\mathbb{P}$ and $\mathbb{P}^h$, the fourth order tensor $\mathbb{E}^{(2,2)}$ satisfies the symmetry conditions 
\begin{equation}
	\label{eq:GR-sym-cnds-E22}
	E_{mnkl}^{(2,2)} = E_{klmn}^{(2,2)}=E_{klnm}^{(2,2)},  \quad m,n,k,l=1,2, \hbox{ in } B_1^+
\end{equation}
and the strong convexity condition
\begin{equation}
	\label{eq:GR-ell-convex-E22}
	\mathbb{E}^{(2,2)} A \cdot A \geq \xi_{\mathbb{E}}^* |A|^2 \hbox{ in }
	\overline{B_1^+},
\end{equation}
for every $2 \times 2$ symmetric matrix $A$, where $ \xi_{\mathbb{E}}^* >0$ is a constant only depending on $t$, $l$, $M_0$ and $\xi_{\mathbb{E}}$. 

The term in \eqref{eq:GR3-7} involving the tensors $ \mathbb K^{(i,j)} \in \mathcal{L}( \mathbb{M}^i, \mathbb{M}^j )$ can be analysed similarly. We have
\begin{eqnarray}
	\label{eq:GR10-1}
	&&\sum_{i,j=1}^3 \mathbb K^{(i,j)} D^i v \cdot D^j \psi
	=
	\\
	&=&Q_{ijklmn} 
	(
	S_{\alpha l} S_{\beta m} S_{\gamma n} v_{,\alpha \beta \gamma}
	+
	T_{lmn \alpha \beta} v_{,\alpha \beta} 
	+
	Z_{lmn\alpha} v_{,\alpha}
	) 	\cdot 
	(
	S_{\delta i} S_{\epsilon j} S_{\vartheta k} v_{, \delta \epsilon \vartheta}
	+
	T_{ijk \tau \chi}\psi_{,\tau \chi} + Z_{ijk \tau} \psi_{,\tau}
	)
	\left | \det S \right |^{-1},\nonumber
\end{eqnarray}
where 
\begin{equation}
	\label{eq:GR_def_T_5index}
	T_{lmn \alpha \beta}= R_{\alpha ln}S_{\beta m}+R_{\beta m n}S_{\alpha l}+
	R_{\alpha m l}S_{\beta n},
\end{equation}
\begin{equation}
	\label{eq:GR_def_Z_4index}
		Z_{lmn\alpha} = \frac{\partial^3  {\cal T}_{\alpha}  }{\partial x_l \partial
		x_m \partial x_n}.
\end{equation}
By the regularity assumptions on $\mathbb Q$ and on the boundary $\partial \Omega$, the tensors $\mathbb{K}^{(i,j)}$ belong to $C^{0,1}(\overline{B_1^+})$, $i,j=1,3$. 

The sixth order tensor $\mathbb{K}^{(3,3)}$ satisfies the symmetry conditions \eqref{eq:M5-3} and the strong convexity condition 
\begin{equation}
	\label{eq:GR11-1}
	\hbox{ (Claim A)  }
	\quad
	\mathbb{K}^{(3,3)} A \cdot A \geq \xi_{ \mathbb{K}}^* |A|^2 \quad \hbox{in }    \overline{B_1^+},
\end{equation}
for every $A \in \widehat{\mathbb{M}}^{3}$, where $ \xi_{\mathbb{K}}^* >0$ is a constant only depending on $t$, $l$, $M_0$ and $\xi_{\mathbb{Q}}$. A proof of Claim $A$ is presented at the end of this proof.

By the regularity of the Neumann data, by the Poincaré inequality and by trace inequalities, we have
\begin{multline}
	\label{eq:controllo-funz-dati-trasf}
	| \mathcal{L}_+(\psi)| 
	\leq
	C
	\left (
	\|\widehat{\mathcal{V}}\|_{H^{ - 3/2  }(\Gamma_1)} 
	\|\psi\|_{H^{ 3/2  }(\Gamma_1)} 
	+
	\| \widehat{\mathcal{M}}_n\|_{H^{ - 1/2  }(\Gamma_1)} 
	\| D \psi\|_{H^{ 1/2  }(\Gamma_1)} 
	+
	\right.
	\\
	\left.
	+
	\|  \widehat{\mathcal{M}}_n^h\|_{H^{  1/2  }(\Gamma_1)} 
	\left ( \|  D \psi \|_{H^{ - 1/2  }(\Gamma_1)} +
	\|  D^2 \psi \|_{H^{ - 1/2  }(\Gamma_1)}
	\right )
	\right )
	\leq	
	\\
	\leq C
	\left(
	\|\widehat{\mathcal{V}}\|_{H^{ - 3/2  }(\Gamma_1)} + \| \widehat{\mathcal{M}}_n\|_{H^{ - 1/2  }(\Gamma_1)} + \|  \widehat{\mathcal{M}}_n^h\|_{H^{  1/2  }(\Gamma_1)} 
	\right)
	\| \psi \|_{ H^2 (B_1^+)   },
\end{multline}
for every  $\psi \in H^3_{\Gamma_1^+}(B_1^+)$,  where $C>0$ only depends on $M_0$.

Finally, by the regularity result up to the boundary, see Theorem \ref{theo:loc-bndry-reg-res-exTheo8-4}, we have that $v \in H^4( B_{ \frac{1}{2}   }^+   )$ and 
\begin{equation}
	\label{eq:GR-bdry-loc-v}
	\|v\|_{ H^4 (B_{\frac{1}{2}}^+)} \leq C \left (
	\|\widehat{\mathcal{V}}\|_{H^{ - 3/2  }(\Gamma_1)} + \| \widehat{\mathcal{M}}_n\|_{H^{ - 1/2  }(\Gamma_1)} + \|  \widehat{\mathcal{M}}_n^h\|_{H^{  1/2  }(\Gamma_1)} + \|v\|_{ H^3 (B_1^+)} \right ),
\end{equation}
and, by applying the
homeomorphism ${\cal {T}}$, we have
\begin{multline}
	\label{eq:GR-bdry-loc-u}
	\|u\|_{ H^4 (\Omega_{j,{\frac{1}{2}}})} 
	\leq 
	\\
	\leq C \left (
	\|{\widehat{V}}\|_{H^{ - 3/2  }(\overline{\Omega_j} \cup \partial \Omega)} + \| {\widehat{M}}_n\|_{H^{ - 1/2  }(\overline{\Omega_j} \cup \partial \Omega)} + \|  {\widehat{M}}_n^h\|_{H^{  1/2  }(\overline{\Omega_j} \cup \partial \Omega)} + \|u\|_{ H^3 (\Omega_j)} \right ),
\end{multline}
where $\Omega_{j,{\frac{1}{2}}} = {\cal
	{T}}^{-1}(B_{\frac{1}{2}}^+)$ and $C>0$ only
depends on $t$, $l$, $M_0$, $M_1$, $\xi_{\mathbb Q}$, $\|\mathbb P\|_{C^{0,1}(\overline{\Omega})}$, $\|{\mathbb {P}}^h\|_{C^{0,1}(\overline{\Omega})}$, $\|\mathbb Q\|_{C^{0,1}(\overline{\Omega})}$. Then, estimate \eqref{eq:GR1-1} follows by
 \eqref{eq:GR2-1}, \eqref{eq:GR-bdry-loc-u} and \eqref{eq:reg-estim-H3}.
\end{proof}

\begin{proof}[Proof of Claim $A$]
Let us notice that, for every $A \in \widehat{\mathbb{M}}^{3}$, the matrix $\mathcal{A}$ given by
\begin{equation}
	\label{eq:GR11-1bis}
	\mathcal{A}_{lmn}= S_{\alpha l} S_{\beta m} S_{\gamma n} A_{\alpha \beta \gamma}
\end{equation}
belongs to $\widehat{\mathbb{M}}^{3}$. Therefore, by the strong convexity of $\mathbb{Q}$, we have 
\begin{equation}
	\label{eq:GR12-1}
	\mathbb{K}^{(3,3)} A \cdot A 
	=
	\mathbb{Q} \mathcal{A} \cdot \mathcal{A} \left | \det S \right |^{-1}
	\geq
	\xi_{ \mathbb{Q}}^* |\mathcal{A}|^2 \quad \hbox{in }    \overline{B_1^+},
\end{equation}
for every ${A} \in \widehat{\mathbb{M}}^{3}$, where $ \xi_{\mathbb{Q}}^* >0$ is a constant only depending on $t$, $l$, $M_0$ and $\xi_{\mathbb{Q}}$. To conclude it is enough to prove that there exists a constant $C>0$ such that
\begin{equation}
	\label{eq:GR12-2}
	|\mathcal{A}|^2 \geq C |A|^2 \quad \hbox{in }    \overline{B_1^+},
\end{equation}
for every $A \in \widehat{\mathbb{M}}^{3}$. Noting that $A_{ijk}=\delta_{i\alpha}\delta_{j\beta}\delta_{k\gamma}A_{\alpha \beta\gamma}$ and $\delta_{i\alpha} = ((S^T)^{-1})_{il}S_{\alpha l}$, we have
\begin{equation}
	\label{eq:GR13-1}
	|{A}|^2 = A_{ijk}A_{ijk}= \Theta_{pl}\Theta_{qm}\Theta_{rn}
	\mathcal{A}_{lmn} \mathcal{A}_{pqr},
\end{equation}
where $\Theta=S^{-1}S^{-T}$. By applying Cauchy-Schwarz's inequality iteratively and observing that $| \Theta| \leq C$ in $\overline{B_1^+}$, it is found that $ |A|^2 \leq c |\mathcal{A}|^2$, where $c>0$ only depends on $M_0$, which implies \eqref{eq:GR12-2}.
\end{proof}

\begin{proof}[Proof of Theorem \ref{theo:loc-int-reg-res-exTheo8-3}]
Without loss of regularity we can assume $\sigma=1$. Let $\vartheta \in C_0^{\infty}(\R^2)$ be a function such that $0
\leq \vartheta(x) \leq 1$ in $\R^2$, with $\vartheta
\equiv 1$ in $B_{\rho}$, $\vartheta \equiv 0 $ in $\R^2 \setminus
B_{\sigma_0}$ and $|D^k \vartheta| \leq C$, $k=1,...,4$,
where $\rho=
\frac{1}{2}$, $\sigma_0= \frac{1}{2}(\rho+1)=\frac{3}{4}$  and $C>0$ is an absolute constant.
	
Let $s \in \R\setminus\{0\}$ and let us introduce the difference operator in the $\alpha$th direction as
\begin{equation}
		\label{eq:IR2-1}
	(\tau_{\alpha,s}v)(x) = \frac{v(x+se_{\alpha})-v(x)}{s}, \qquad \alpha=1,2,
\end{equation}
for any function $v$. In what follows we shall assume that $|s|
\leq \frac{1}{16}$.
	
For every function $\varphi \in H_0^3(B_{1})$, let us still denote by
	$\varphi \in H^3(\R^2)$ its extension to the plane $\R^2$ obtained
	by assuming $\varphi \equiv 0$ in $\R^2 \setminus B_{1}$. Let us notice that, for every smooth function $\psi$ and $\alpha=1,2$, we have
\begin{equation}
	\label{eq:IR2-2}
	D^\beta \tau_{\alpha,s} (\psi) = \tau_{\alpha,s} (D^\beta \psi),
\end{equation}
where $D^\beta = D^{\beta_1}_1 D^{\beta_2}_2= \frac{\partial^{\beta_1}}{\partial x_1^{\beta_1}} \frac{\partial^{\beta_2}}{\partial x_2^{\beta_2}}$, $\beta=(\beta_1,\beta_2)$. 

Let us start by elaborating the term in $a(u,\varphi)$ containing the third order derivatives, with $u$ replaced by $\tau_{\alpha,s}(\vartheta u)$. 

We have
\begin{multline*}
	(\vartheta u)_{,\alpha \beta \gamma} =\vartheta u_{,\alpha\beta\gamma}+
	(
	\vartheta_{,\alpha} u_{,\beta \gamma}+  \vartheta_{,\beta} u_{,\alpha \gamma} + \vartheta_{,\gamma} u_{,\alpha \beta}
	)
	+
	(
	\vartheta_{,\alpha\beta} u_{,\gamma}+  \vartheta_{,\alpha\gamma} u_{,\beta} + \vartheta_{,\beta\gamma} u_{,\alpha}
	)
	+
	\vartheta_{,\alpha \beta\gamma} u
\end{multline*}
or, equivalently, in compact notation
\begin{equation}
	\label{eq:IR3-1}
	D^3(\vartheta u)= \vartheta D^3 u + D \vartheta \otimes D^2 u+ D^2 \vartheta \otimes D u + u D^3 \vartheta.
\end{equation}
Therefore, we have
\begin{eqnarray}
	\label{eq:IR3-2}
	&&\int_{B_1} \mathbb Q D^3 (\tau_{\alpha,s} (\vartheta u)   ) \cdot D^3 \varphi =
	\int_{B_1} \mathbb Q ( \tau_{\alpha,s} (\vartheta D^3 u)   ) \cdot D^3 \varphi 
	+ \int_{B_1} \mathbb Q
	(
	\tau_{\alpha,s}
	(
	D \vartheta \otimes D^2 u+ D^2 \vartheta \otimes D u
	)
	)
	\cdot 
	D^3
	\varphi
	+\nonumber
	\\
	&&\quad \quad \quad \quad \quad \quad \quad \quad \quad \quad \quad \quad \quad+
	\int_{B_1} \mathbb Q
	(
	\tau_{\alpha,s}
	(uD^3 \vartheta)
	)
	\cdot 
	D^3
	\varphi 
	\equiv I_1+I_2+I_3.
\end{eqnarray} 
Let us estimate $I_2$. By the definition of $\tau_{\alpha,s}$, we have
\begin{multline*}
\tau_{\alpha,s}
(
D \vartheta \otimes D^2 u+ D^2 \vartheta \otimes D u
)
=
D \vartheta (x+se_\alpha)\otimes \tau_{\alpha,s}
( D^2 u) + \tau_{\alpha,s}(D \vartheta) \otimes D^2 u+
\\
+
D^2 \vartheta (x+se_\alpha)\otimes \tau_{\alpha,s}
( D u) + \tau_{\alpha,s}(D^2\vartheta) \otimes D u
\end{multline*}
and then
\begin{equation}
	\label{eq:IR4-3}
	|I_2| \leq C  \|u\|_{H^3(B_{1})}\|D^3 \varphi\|_{L^2(B_{1})},
\end{equation}
where $C>0$ only depends on $\| \mathbb
Q\|_{L^\infty( \overline{B_{1}})}$. Similarly, we have
\begin{equation}
	\label{eq:IR4-4}
	|I_3| \leq C  \|u\|_{H^1(B_{1})}\|D^3 \varphi\|_{L^2(B_{1})},
\end{equation}
where $C>0$ only depends on $\| \mathbb
Q\|_{L^\infty( \overline{B_{1}})}$.

Let us rewrite the term $I_1$ as follows
\begin{multline}
	\label{eq:IR5-2}
	I_1 = \int_{B_{1}} \mathbb Q ( \tau_{\alpha,s} ( \vartheta D^3 u)) \cdot D^3\varphi =
	\\
	=
	\int_{B_{1}} \tau_{\alpha,s} ( \mathbb Q  ( \vartheta D^3 u)) \cdot D^3\varphi 
	- \int_{B_{1}} (\tau_{\alpha,s} \mathbb Q) (x)  ( \vartheta D^3 u)(x+se_{\alpha}) \cdot D^3\varphi \equiv I_1'
	+ I_1'',
\end{multline}
where
\begin{equation}
	\label{eq:IR6-1}
	|I_1''| \leq C  \|D^3 u\|_{L^2(B_{1})}\|D^3 \varphi\|_{L^2(B_{1})},
\end{equation}
with a constant $C>0$ only depending on $\| \mathbb
Q\|_{C^{0,1}( \overline{B_{1}})}$. By integrating by parts, and recalling that $\varphi \in H_0^3(B_{1} )$, we have
\begin{multline}
	\label{eq:IR7-3}
	I_1' = \int_{B_{1}} \tau_{\alpha,s} ( \mathbb Q   ( \vartheta D^3 u)) \cdot D^3\varphi =
	- \int_{B_{1}} \mathbb Q   ( \vartheta D^3 u) \cdot (\tau_{\alpha,-s} (D^3\varphi)) = \\
	= - \int_{B_{1}} \mathbb Q   (D^3 u) \cdot ( \vartheta \tau_{\alpha,-s} (D^3\varphi)) =
	- \int_{B_{1}} \mathbb Q   (D^3 u) \cdot D^3( \vartheta \tau_{\alpha,-s} \varphi) + \\
	+ \int_{B_{1}} \mathbb Q (D^3 u) 
	\cdot 
	\left ( 
	D^3 \vartheta (\tau_{\alpha,-s} \varphi) +
	D \vartheta \otimes D^2(\tau_{\alpha,-s} \varphi) + 
	 D^2 \vartheta \otimes D(\tau_{\alpha,-s} \varphi)
	\right ) =
	\\
	= - \int_{B_{1}} \mathbb Q   (D^3 u) \cdot D^3( \vartheta \tau_{\alpha,-s} \varphi) + \widetilde{I}_1'.
\end{multline}
By Poincaré's inequality in $H_0^3(B_1)$, we have
\begin{equation}
	\label{eq:IR8-1}
	|\widetilde{I}_1'| \leq C  \|D^3 u\|_{L^2(B_{1})}\|D^3 \varphi\|_{L^2(B_{1})},
\end{equation}
where $C>0$ only depends on $\| \mathbb
Q\|_{L^\infty( \overline{B_{1}})}$. 

By using \eqref{eq:IR4-3},  \eqref{eq:IR4-4}, \eqref{eq:IR7-3}, \eqref{eq:IR8-1} in \eqref{eq:IR3-2}, we have, for every $\varphi \in H_0^3(B_1)$, 
\begin{equation}
	\label{eq:IR8-2}
	\int_{B_1} \mathbb Q D^3 (\tau_{\alpha,s} (\vartheta u)   ) \cdot D^3 \varphi =
	- \int_{B_{1}} \mathbb Q   (D^3 u) \cdot D^3( \vartheta \tau_{\alpha,-s} \varphi) + r_{\mathbb Q},
	\end{equation} 
with
\begin{equation}
	\label{eq:IR8-3}
	|r_{\mathbb Q}| \leq C  \|u\|_{H^3(B_{1})}\|D^3 \varphi\|_{L^2(B_{1})},
\end{equation}
where $C>0$ only depends on $\| \mathbb
Q\|_{C^{0,1}( \overline{B_{1}})}$.

By proceeding similarly with the term in $a(u,\varphi)$ containing the second order derivatives (with $u$ replaced by $\tau_{\alpha,s}(\vartheta u)$), for every $\varphi \in H_0^3(B_1)$ we have
\begin{equation}
	\label{eq:IR8-4}
	\int_{B_1} (\mathbb P + \mathbb P^h) D^2 (\tau_{\alpha,s} (\vartheta u)   ) \cdot D^2 \varphi =
	- \int_{B_{1}} (\mathbb P + \mathbb P^h)   (D^2 u) \cdot D^2( \vartheta \tau_{\alpha,-s} \varphi) + r_{\mathbb P},
\end{equation} 
with
\begin{equation}
	\label{eq:IR8-5}
	|r_{\mathbb P}| \leq C  \|u\|_{H^2(B_{1})}\|D^2 \varphi\|_{L^2(B_{1})},
\end{equation}
where $C>0$ only depends on $\| \mathbb
P\|_{C^{0,1}( \overline{B_{1}})}$, $\| \mathbb
P^h\|_{C^{0,1}( \overline{B_{1}})}$.

By \eqref{eq:IR8-2}--\eqref{eq:IR8-5}, for every $\varphi \in H_0^3(B_1)$ we have
\begin{equation}
	\label{eq:IR9-1}
	a(\tau_{\alpha,s} (\vartheta u) , \varphi  )= - a(u,  \vartheta \tau_{\alpha,-s} \varphi    ) + r,
\end{equation}
where, by Poincaré's inequality, 
\begin{equation}
	\label{eq:IR9-2}
	|r| \leq C  \|u\|_{H^3(B_{1})}\|D^3 \varphi\|_{L^2(B_{1})}
\end{equation}
and $C>0$ only depends on $\| \mathbb
P\|_{C^{0,1}( \overline{B_{1}})}$, $\| \mathbb
P^h\|_{C^{0,1}( \overline{B_{1}})}$, $\| \mathbb
Q\|_{C^{0,1}( \overline{B_{1}})}$. The function $\vartheta \tau_{\alpha,-s} \varphi  \in H_0^3(B_1)$ is a test function and then, by the weak formulation of the problem \eqref{eq:IR1-1}, for every $\varphi \in H_0^3(B_1)$ we have
\begin{equation}
	\label{eq:IR9-4}
	a(\tau_{\alpha,s} (\vartheta u) , \varphi  )
	\leq 
	C
	\| D^3 \varphi\|_{L^2(B_{1})} 	\| u\|_{H^3(B_{1})} ,
\end{equation}
where $C>0$ only depends on $\| \mathbb
P\|_{C^{0,1}( \overline{B_{1}})}$, $\| \mathbb
P^h\|_{C^{0,1}( \overline{B_{1}})}$, $\| \mathbb
Q\|_{C^{0,1}( \overline{B_{1}})}$. 

Let us take $\varphi = \tau_{\alpha,s} (\vartheta u)$. By the strong convexity of the strain energy (see Lemma \ref{Lemma:M6-1}), for every $s$ such that $|s| \leq \frac{1}{16}$ we have
\begin{equation}
	\label{eq:IR10-1}
	\| D^3 \tau_{\alpha,s} (\vartheta u)\|_{L^2(B_{1})}
	\leq 
	C
		\| u\|_{H^3(B_{1})}   
\end{equation}
and, therefore, 
\begin{equation}
	\label{eq:IR10-2}
	\| D^4 u\|_{L^2(B_{ \frac{1}{2}   })}
	\leq 
	C	\| u\|_{H^3(B_{1})} ,
\end{equation}
where $C>0$ only depends on $t$, $l$, $\xi_{\mathbb{Q}}$, $\| \mathbb
P\|_{C^{0,1}( \overline{B_{1}})}$, $\| \mathbb
P^h\|_{C^{0,1}( \overline{B_{1}})}$, $\| \mathbb
Q\|_{C^{0,1}( \overline{B_{1}})}$. 
\end{proof}

\begin{proof}[Proof of Theorem \ref{theo:loc-bndry-reg-res-exTheo8-4} ]
Let us assume for simplicity $\sigma=1$. Let us denote by $\vartheta \in C_0^{\infty}(\R^2)$ a function such that $0
\leq \vartheta(x) \leq 1$ for every $x \in \R^2$, $\vartheta
\equiv 1$ in $B_{\rho}$, $\vartheta \equiv 0 $ in $\R^2 \setminus
B_{\sigma_0}$, $|D^k \vartheta | \leq C$, $k=1,...,4$,
where $ \rho =
\frac{1}{2}$, $\sigma_0= \frac{1}{2}(\rho+1)=\frac{3}{4}$, and $C>0$ is an absolute constant.

For every function $\varphi \in H_{\Gamma_{1}^+}^2(B_{1}^+)$, we
still denote by $\varphi \in H^2(\R_2^+)$ its extension to
$\R_+^2$ obtained by assuming $\varphi \equiv 0$ in $\R_+^2
\setminus B_{1}^+$. Let $s \in \R \setminus \{0 \}$, with $|s|
\leq \frac{1}{16}$. Let us notice that if $u\in H^3(B_{1}^+)$,
then $\tau_{1,s}(\vartheta u) \in H_{\Gamma_{1}^+}^3(B_{1}^+)$.

 We shall firstly derive an estimate of the tangential derivative $\frac{\partial}{\partial x_1} D^3u$.

By using arguments analogous to those adopted to prove \eqref{eq:IR9-1}, \eqref{eq:IR9-2} in the study of the interior regularity, for every $\varphi \in H^3_{ \Gamma_1^+  }(B_1^+)$, we have
\begin{equation}
	\label{eq:BR-EeK}
	a_+(\tau_{1,s} (\vartheta u) , \varphi  )= - a_+(u,  \vartheta \tau_{1,-s} \varphi    ) + r_+,
\end{equation}
where, by Poincaré's inequality applied on $H^3_{ \Gamma_1^+  }(B_1^+)$, 
\begin{equation}
	\label{eq:BR9-remainder}
	|r_+| \leq C  \|u\|_{H^3(B_{1}^+)}\|D^3 \varphi\|_{L^2(B_{1}^+)},
\end{equation}
with a constant $C>0$ only depending on $E$ and $K$. The function $\vartheta \tau_{1,-s} \varphi   \in H^3_{ \Gamma_1^+  }(B_1^+)$ is a test function and then, by \eqref{eq:BR-EeK}, \eqref{eq:BR9-remainder}, by the weak formulation of the problem \eqref{eq:BR1-1} and by Poincaré's inequality in $H^3_{\Gamma_1^+}(B_1^+)$, for every $\varphi  \in H^3_{ \Gamma_1^+  }(B_1^+)$ we have
\begin{equation}
	\label{eq:BR12-3}
	|a_+(\tau_{1,s}(\vartheta u), \varphi) | 
	\leq 
	C 
	\left (
	G + \| u \|_{H^3 ( B_{1}^+)} 
	\right 	) \| D^3 \varphi
	\|_{L^2 ( B_{1}^+)} ,
\end{equation}
where $C>0$ only depends on $E$ and $K$.

We now estimate {}from below $a_+(\psi,\psi)$ for every $\psi \in H^3_{ \Gamma_1^+  }(B_1^+)$. We write  
\begin{equation}
	\label{eq:BR13-1}
	a_+(\psi, \psi) = \int_{B_1^+} \mathbb {K}^{(3,3)} D^3 \psi \cdot D^3 \psi+ \mathcal{R}(\psi,\psi), 
\end{equation}
where
\begin{equation}
	\label{eq:BR13-2}
	\mathcal{R}(\psi,\psi) = a_+^{\mathbb  E  }(\psi,\psi) + 
	\int_{B_1^+} \sum_{i,j=1; i+j<6}^3 \mathbb K^{(i,j)} D^i \psi \cdot D^j \psi.
\end{equation}
By Poincaré's inequality in $H^3_{ \Gamma_1^+  }(B_1^+)$ and by the standard inequality $2ab \leq \epsilon a^2 + \epsilon^{-1}b^2$ for every $a,b \in \R$ and $\epsilon >0$,  the remainder $\mathcal{R}(\psi,\psi)$ can be estimated as follows
\begin{equation}
	\label{eq:BR14-2}
	|\mathcal{R}(\psi,\psi)| 
	\leq
	C
	\left (
	\epsilon \|D^3 \psi \|_{L^2 (B_1^+)   }^2 +
	\left ( 1 + \frac{1}{\epsilon}  \right )
	 \|D^2 \psi \|_{L^2 (B_1^+)   }^2
	\right)
\end{equation}
for every $\psi \in H^3_{ \Gamma_1^+  }(B_1^+)$, where $C>0$ only depends on $E$ and $K$. Taking $\psi = \tau_{1,s}(\vartheta u)$, by the strong convexity of $\mathbb {K}^{(3,3)}$ and choosing $\epsilon$ small enough in \eqref{eq:BR14-2}, we have
\begin{equation}
	\label{eq:BR15-1}
	a_+(\tau_{1,s}(\vartheta u), \tau_{1,s}(\vartheta u)) 
	\geq
	C_2 
	\|D^3 ( \tau_{1,s}(\vartheta u)   ) \|_{L^2 (B_1^+)   }^2
	-
	C_3
	\|D^2 ( \tau_{1,s}(\vartheta u)   ) \|_{L^2 (B_1^+)   }^2
\end{equation}
where $C_2>0$, $C_3 >0$ only depend on $\xi_{   \mathbb K}$, $E$, $K$. By \eqref{eq:BR12-3} (with $\varphi = \tau_{1,s}(\vartheta u)$) and \eqref{eq:BR15-1}, for every $s$ such that $|s| \leq 1/16$, we have
\begin{eqnarray}
	\label{eq:BR16-1}
	\|D^3 ( \tau_{1,s}(\vartheta u)   ) \|_{L^2 (B_1^+)   }^2
	\leq C
	\|D^3 ( \tau_{1,s}(\vartheta u)   ) \|_{L^2 (B_1^+)   }
	\left (
	G + \| u \|_{H^3 ( B_{1}^+)} 
	\right 	)
	+
	C
	\|u \|_{H^3 (B_1^+)   }^2,
\end{eqnarray}
which implies 
\begin{equation}
	\label{eq:BR17-2}
	\left\| \frac{\partial }{\partial x_1}  D^3 u \right\|_{L^2 ( B_{\rho}^+)}
	\leq
	C \left (G + \| u \|_{H^3 ( B_{1}^+)} \right ),
\end{equation}
where $C>0$ only depends on $\xi_{ \mathbb{K}  }$, $E$, $K$.

To obtain an analogous estimate for the normal derivative $\frac{\partial}{\partial x_2} D^3 u$, we use the
following lemma.

\begin{lem} (\cite[Lemma $9.3$]{l:agmon})
	\label{lem:agmon}
	Assume that the function $v \in L^2 (B_{\sigma}^+)$ has weak
	tangential derivative $ \frac{\partial v}{\partial x_1} \in L^2
	(B_{\sigma}^+)$ and there exists a constant $K_0>0$ such
	that
	\begin{equation}
		\label{eq:agmon-1}
		\left |\int_{B_{\sigma}^+} v \frac{\partial^3 \varphi}{\partial
			x_2^3} \right |
		\leq K_0 \| \varphi \|_{H^2 ( B_{\sigma}^+)}, \qquad \hbox{for
			every } \varphi \in C_0^{\infty}(B_{\sigma}^+).
	\end{equation}
	Then, for every $\rho < \sigma$, $v \in H^1(B_{\rho}^+)$ and
	\begin{equation}
		\label{eq:agmon-2}
		\| v \|_{H^1 ( B_{\rho}^+)}
		\leq
		C
		\left (
		K_0 + \| v \|_{L^2 ( B_{\sigma}^+)} + \sigma\left\|\frac{\partial v}{\partial x_1} \right\|_{L^2 (
			B_{\sigma}^+)}
		\right ),
	\end{equation}
	where $C>0$ only depends on $\frac{\rho}{\sigma}$.
\end{lem}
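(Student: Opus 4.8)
The plan is to flatten the picture with a cut-off so that the relevant part of $v$ is supported away from the curved arc $\Gamma_\sigma^+$, and then to control the normal derivative $\partial_{x_2}v$ by a duality argument whose test functions come from an auxiliary biharmonic Dirichlet problem. By scaling I would first take $\sigma=1$. Fix $\rho<\rho_1<\rho_2<1$ and $\theta\in C_0^\infty(B_1)$ with $\theta\equiv1$ on $\overline{B_{\rho_1}^+}$, $\operatorname{supp}\theta\subset B_{\rho_2}$, $|D^k\theta|\le C$ for $k\le3$, and set $w:=\theta v$ (extended by zero). Then $w\in L^2(\R^2_+)$ is supported in $\overline{B_{\rho_2}^+}$ — so it vanishes near $\Gamma_1^+$ — $w\equiv v$ on $B_{\rho_1}^+\supset B_\rho^+$, $\|w\|_{L^2}\le\|v\|_{L^2(B_1^+)}$, $\|\partial_{x_1}w\|_{L^2}\le C(\|\partial_{x_1}v\|_{L^2(B_1^+)}+\|v\|_{L^2(B_1^+)})$, and, writing $\theta\,\partial_{x_2}^3\varphi=\partial_{x_2}^3(\theta\varphi)-[\theta,\partial_{x_2}^3]\varphi$ (the commutator being a combination of $(D^j\theta)(D^{\le2}\varphi)$), hypothesis \eqref{eq:agmon-1} gives
\[
\Big|\int_{B_1^+}w\,\partial_{x_2}^3\varphi\Big|\le\mathcal{K}_1\,\|\varphi\|_{H^2(B_1^+)},\qquad\varphi\in C_0^\infty(B_1^+),\qquad \mathcal{K}_1:=C\big(\mathcal{K}+\|v\|_{L^2(B_1^+)}\big).
\]
Since $v\equiv w$ on $B_\rho^+$ and $\|w\|_{L^2},\|\partial_{x_1}w\|_{L^2}$ are already controlled, it suffices to prove, for every $\phi\in C_0^\infty(B_{\rho_1}^+)$,
\[
|\langle\partial_{x_2}w,\phi\rangle|\le C\big(\mathcal{K}_1+\|w\|_{L^2}+\|\partial_{x_1}w\|_{L^2}\big)\,\|\phi\|_{L^2};
\]
as $C_0^\infty(B_{\rho_1}^+)$ is dense in $L^2(B_{\rho_1}^+)$, this forces $\partial_{x_2}w\in L^2(B_{\rho_1}^+)$, up to $\Gamma_{\rho_1}$, with the bound required in \eqref{eq:agmon-2}.

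For this estimate I would fix $\phi\in C_0^\infty(B_{\rho_1}^+)$ and let $\varphi$ solve the biharmonic Dirichlet problem $\Delta^2\varphi=\Delta\phi$ in $B_{\rho_1}^+$ with $\varphi=\partial_n\varphi=0$ on $\partial B_{\rho_1}^+$; testing with $\varphi$ gives $\|\varphi\|_{H^2}\le C\|\Delta\varphi\|_{L^2}\le C\|\phi\|_{L^2}$, so $\varphi\in H^2_0(B_{\rho_1}^+)$ and, extended by zero, $\varphi\in H^2_0(B_1^+)$ is admissible in the inequality just displayed. With $\psi:=\Delta\varphi-\phi$ (harmonic in $B_{\rho_1}^+$, $\|\psi\|_{L^2}\le C\|\phi\|_{L^2}$), the elementary identity $\partial_{x_2}^3=\partial_{x_2}\Delta-\partial_{x_1}^2\partial_{x_2}$ and $\Delta\varphi=\phi+\psi$ give $\partial_{x_2}\phi=\partial_{x_2}^3\varphi-\partial_{x_2}\psi+\partial_{x_1}^2\partial_{x_2}\varphi$, so that — formally, and rigorously after the density extension above, integrating by parts once in $x_1$ in the last term (no lateral boundary contribution, $\varphi$ having compact tangential support) —
\[
\langle\partial_{x_2}w,\phi\rangle=-\int_{B_1^+}w\,\partial_{x_2}^3\varphi+\int_{B_{\rho_1}^+}w\,\partial_{x_2}\psi+\int_{B_1^+}\partial_{x_1}w\,\partial_{x_1}\partial_{x_2}\varphi.
\]
The first term is $\le\mathcal{K}_1\|\varphi\|_{H^2}\le C\mathcal{K}_1\|\phi\|_{L^2}$; the third is $\le\|\partial_{x_1}w\|_{L^2}\|\varphi\|_{H^2}\le C\|\partial_{x_1}w\|_{L^2}\|\phi\|_{L^2}$; the second is $\le\|w\|_{L^2}\|\partial_{x_2}\psi\|_{L^2(B_{\rho_1}^+)}$, and is controlled once one knows $\psi\in H^1(B_{\rho_1}^+)$, up to $\Gamma_{\rho_1}$, with $\|\psi\|_{H^1}\le C\|\phi\|_{L^2}$.

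I expect the hard part to be precisely this control of $\psi$ up to the flat boundary $\Gamma_{\rho_1}$ (together with the rigorous justification of the above splitting there, since a priori $\partial_{x_2}^3\varphi$ is only a distribution near $\Gamma_{\rho_1}$). Here the cut-off pays off: since $\phi$ vanishes near $\Gamma_{\rho_1}$, $\varphi$ is biharmonic near $\Gamma_{\rho_1}$ with vanishing Cauchy data on the smooth flat arc $\Gamma_{\rho_1}$, so boundary regularity for $\Delta^2$ makes $\varphi$ — hence $\psi=\Delta\varphi$ near that arc — smooth up to $\Gamma_{\rho_1}$; combined with interior estimates for the harmonic function $\psi$ this yields $\|\psi\|_{H^1(B_{\rho_1}^+)}\le C\|\phi\|_{L^2}$. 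The residual issue at the corners $(\pm\rho_1,0)$ of the hemidisk is removed by working from the outset in a smooth auxiliary domain whose boundary contains a flat piece through $\Gamma_\rho$, or by a localisation near those corners. This up-to-$\Gamma$ bookkeeping — recovering the first normal derivative $\partial_{x_2}v$ from a bound on the third normal derivative while only one tangential derivative is assumed in $L^2$ — is the substance of \cite[Lemma~9.3]{l:agmon}, and it is in exactly this up-to-$\Gamma$ form that the lemma is invoked in the proof of Theorem \ref{theo:loc-bndry-reg-res-exTheo8-4}.
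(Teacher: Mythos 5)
The paper offers no proof of this lemma at all --- it is quoted directly from Agmon's book \cite[Lemma 9.3]{l:agmon} --- so there is no internal argument to compare yours with; I can only judge the proposal on its own terms, and it has a genuine gap at exactly the point you flag as ``the hard part''. The problematic term is $\int w\,\partial_{x_2}\psi$ with $\psi=\Delta\varphi-\phi$: your scheme needs $\|\partial_{x_2}\psi\|_{L^2(B_{\rho_1}^+)}\leq C\|\phi\|_{L^2}$ uniformly over $\phi\in C_0^{\infty}(B_{\rho_1}^+)$, and this inequality is false. Knowing only that $\psi$ is harmonic with $\|\psi\|_{L^2}\leq C\|\phi\|_{L^2}$ gives no $H^1$ control up to $\Gamma_{\rho_1}$, and the boundary-regularity fix you sketch is not uniform: the neighbourhood of $\Gamma_{\rho_1}$ on which $\phi$ vanishes shrinks with $\phi$, while the density argument in $L^2(B_{\rho_1}^+)$ forces you to admit all $\phi\in C_0^{\infty}(B_{\rho_1}^+)$. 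A half-plane Fourier computation makes the failure explicit: solving $\Delta^2\varphi=\Delta\phi$ in $\{x_2>0\}$ with $\varphi=\partial_{x_2}\varphi=0$ on $\{x_2=0\}$ gives $\hat\psi(\xi,x_2)=-2|\xi|\,e^{-|\xi|x_2}\int_0^{\infty}e^{-|\xi|t}\hat\phi(\xi,t)\,dt$, whence indeed $\|\psi\|_{L^2}\leq\|\phi\|_{L^2}$, but $\|\partial_{x_2}\psi\|_{L^2}$ is comparable to $\|\partial_{x_1}\phi\|_{L^2}$ (take $\hat\phi(\xi,\cdot)$ proportional to $e^{-|\xi|t}$ at high tangential frequencies), so the duality argument as set up loses exactly one tangential derivative and cannot close with only $\|\phi\|_{L^2}$ on the right.

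This loss is not incidental: it is precisely where the hypothesis $\partial_{x_1}v\in L^2$ must be used a second time, whereas your computation spends it only on the harmless term $\int\partial_{x_1}w\,\partial_{x_1}\partial_{x_2}\varphi$. To rescue the idea one would have to convert $\partial_{x_2}\psi$ into a tangential derivative --- e.g.\ via the conjugate-function relation $\partial_{x_2}\psi=H\partial_{x_1}\psi$ for decaying harmonic functions in a half-plane ($H$ the Hilbert transform in $x_1$), so as to throw $\partial_{x_1}$ onto $w$ --- which is in substance the tangential Fourier analysis on which Agmon's actual proof rests. A secondary, related issue (which you acknowledge) is the justification of the splitting identity for $\varphi\in H_0^2$ rather than $C_0^{\infty}$: the natural approximation argument requires passing to the limit in $\int w\,\partial_{x_2}\Delta\varphi_j$, which presupposes either $w\in H^1$ (circular) or the very bound on $\partial_{x_2}\psi$ that fails. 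The remaining scaffolding --- the cut-off removing the curved arc, the transfer of hypothesis \eqref{eq:agmon-1} to $w=\theta v$ via the commutator, and the reduction to a uniform bound on $\langle\partial_{x_2}w,\phi\rangle$ --- is correct and well organized.
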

In what follows we shall consider test functions $\varphi \in C_0^\infty ( B_{\sigma_0}^+  )$, where $\sigma_0= 3/4$ (and $\sigma=1$). {}From the expression \eqref{eq:BR1-2} of $a_+(u,\varphi)$ we have
\begin{multline}
	\label{eq:BR18-1}
	\int_{B_{\sigma_0}^+} K_{222lmn}^{(3,3)} u_{,lmn}\varphi_{,222}=
	a_+(u, \varphi)
	-
	\int_{B_{\sigma_0}^+}
	\sum_{(i,j,k) \neq (2,2,2)}
	K_{ijklmn}^{(3,3)} u_{,lmn}\varphi_{,ijk}
	-
	\\
	-
		\int_{B_{\sigma_0}^+} \sum_{i,j=1}^2 \mathbb E^{(i,j)} D^i u \cdot D^j \varphi
		-
		\int_{B_1^+} \sum_{(i,j) \neq (3,3)} \mathbb K^{(i,j)} D^i u \cdot D^j \varphi.
\end{multline}
Let us estimate the terms on the right-hand side of \eqref{eq:BR18-1}. By the weak formulation \eqref{eq:BR1-1}, we have
\begin{equation}
	\label{eq:BR18-2}
	|a_+(u, \varphi)| \leq G \| \varphi\|_{ H^2 (B_{\sigma_0}^+)   }.
\end{equation}
Let us consider the second term on the right hand side of \eqref{eq:BR18-1}. Since $(i,j,k) \neq (2,2,2)$, without loss of generality we can assume $k=1$. Integrating by parts with respect to $x_1$, for every $\varphi \in C_0^\infty ( B_{\sigma_0}^+  )$ we have
\begin{multline}
	\label{eq:BR19-1}
	\int_{B_{\sigma_0}^+}
	K_{ij1lmn}^{(3,3)} u_{,lmn}\varphi_{,ij1} = 
	-
	\int_{B_{\sigma_0}^+}
	(K_{ij1lmn}^{(3,3)} u_{,lmn})_{,1}\varphi_{,ij}
	=
	\\
	=
	-
	\int_{B_{\sigma_0}^+}
	K_{ij1lmn,1}^{(3,3)} u_{,lmn}\varphi_{,ij}
	-
	K_{ij1lmn}^{(3,3)} u_{,lmn1}\varphi_{,ij}
\end{multline}
and therefore, by \eqref{eq:BR17-2} and H\"{o}lder's inequality, we have
\begin{equation}
	\label{eq:BR19-2}
	\left |
	\int_{B_{\sigma_0}^+}
	K_{ij1lmn}^{(3,3)} u_{,lmn}\varphi_{,ij1}
	\right |
	\leq 
	C
	(
	G + \| u\|_{H^3(B_1^+)}
	)
	\| D^2 \varphi\|_{L^2(B_{\sigma_0}^+)},
\end{equation}
where $C>0$ only depends on $\xi_{\mathbb K}$, $E$, $K$.

By using Poincaré's inequality in $C^\infty_0 (B_{\sigma_0}^+) $,  the terms involving $\mathbb E^{(i,j)}$ in \eqref{eq:BR18-1} can be estimated as follows
\begin{equation}
	\label{eq:BR19-3}
	\left |
	\int_{B_{\sigma_0}^+} \sum_{i,j=1}^2 \mathbb E^{(i,j)} D^i u \cdot D^j \varphi
	\right |
	\leq 
	C
	\| u\|_{H^2(B_1^+)} 
	\| D^2 \varphi\|_{L^2(B_{\sigma_0}^+)},
\end{equation}
where $C>0$ only depends on $E$. The estimate of the terms in \eqref{eq:BR18-1} involving $\mathbb K^{(i,j)}$ is easy, with the exception of those terms which involve $D^3 \varphi$ (that is $j=3$). These integrals can be estimated by integrating by parts and descharging one derivative {}from $\varphi$ to $u$. Let us consider the term with $\mathbb K^{(2,3)}$, the analysis of the term with $\mathbb K^{(1,3)}$ being similar. Passing to Cartesian coordinates and integrating by parts, for every $\varphi \in C^\infty_0 (B_{\sigma_0}^+) $ we have
\begin{multline}
	\label{eq:BR20-1}
	\left |
	\int_{B_{\sigma_0}^+}
	\mathbb K^{(2,3)} D^2 u \cdot D^3 \varphi 
	\right |
	=
	\left |
	\int_{B_{\sigma_0}^+}
	K_{ijklm}^{(2,3)} u_{,lm} \varphi_{,ijk}
	\right | = 
	\\
	=
	\left |
	\int_{B_{\sigma_0}^+}
	(K_{ijklm}^{(2,3)} u_{,lm})_{,k}\varphi_{,ij}
	\right |
	\leq
	C 	\| u\|_{H^3(B_1^+)} 
	\| D^2 \varphi\|_{L^2(B_{\sigma_0}^+)},
\end{multline}
where $C>0$ only depends on $K$. Therefore, by using \eqref{eq:BR18-2}, \eqref{eq:BR19-2}--\eqref{eq:BR20-1} in \eqref{eq:BR18-1}, for every $\varphi \in C^\infty_0 (B_{\sigma_0}^+) $ we obtain
\begin{equation}
	\label{eq:BR20-2}
	\left |
	\int_{B_{\sigma_0}^+} K_{222lmn}^{(3,3)} u_{,lmn}\varphi_{,222}
	\right |
	\leq 
	C
	(
	G +\| u\|_{H^3(B_1^+)} 
	)
	\| \varphi\|_{H^2(B_{\sigma_0}^+)},
\end{equation}
where $C>0$ only depends on $\xi_{\mathbb K}$, $E$, $K$.

Let us define 
\begin{equation}
	\label{eq:BR21-1}
	v = \sum_{l,m,n=1}^2 K_{222lmn}^{(3,3)}  u_{,lmn}.
\end{equation}
By Lemma \ref{lem:agmon}, for every $\rho < \sigma_0$, the function $v$ belongs to $H^1(B_\rho^+)$ and, by \eqref{eq:BR17-2}, 
\begin{equation}
	\label{eq:BR21-2}
	\| v \|_{H^1 ( B_{\rho}^+)} \leq C \left (G + \| u \|_{H^3 ( B_{1}^+)} \right ),
\end{equation}
where $C>0$ only depends on $\xi_{\mathbb K}$, $E$, $K$. By the ellipticity of the tensor $\mathbb K^{(3,3)}$, $K_{222222}^{(3,3)}>0$ and then
\begin{equation}
	\label{eq:BR21-3}
	u_{,222} = ( K_{222222}^{(3,3)}  )^{-1}
	\left (
	v - \sum_{(l,m,n)\neq (2,2,2)} K_{222lmn}^{(3,3)} u_{,lmn}
	\right ).
\end{equation}
By \eqref{eq:BR17-2}, we deduce that $u_{,222} \in H^1( B_{\rho}^+)$, with
\begin{equation}
	\label{eq:BR21-4}
	\|u,_{2222}\|_{L^2 ( B_{\rho}^+)}
	\leq
	C \left (G + \| u \|_{H^3 ( B_{1}^+)} \right ),
\end{equation}
where $C>0$ only depends on $\xi_{\mathbb K}$, $E$, $K$. Finally, by \eqref{eq:BR17-2} and \eqref{eq:BR21-4} we obtain the wished inequality
\begin{equation}
	\label{eq:BR21-5}
	\|u\|_{H^4 ( B_{\rho}^+)}
	\leq C \left (G + \| u \|_{H^3 ( B_{1}^+)} \right ),
\end{equation}
where $C>0$ only depends on $\xi_{\mathbb K}$, $E$, $K$.
\end{proof}

We conclude this section with an improved  interior regularity result, which will be useful in dealing with the unique continuation properties obtained in Section \ref{sec:Doubl-Three-Sphere}.

\begin{theo} [{\bf Improved interior regularity}]
	\label{theo:loc-int-reg-res-improved}
	Let $B_{\sigma}$ be an open ball in $\R^2$ centered
	at the origin and with radius $\sigma$. Let $u \in H^3(B_{\sigma})$ be such that
	\begin{equation}
		\label{eq:HIR1-1}
		a(u, \varphi) = 0		   \qquad \hbox{for every }
		\varphi \in H^3_0(B_{\sigma}),
	\end{equation}
	with
	\begin{equation}
		\label{eq:HIR1-2}
		a(u, \varphi) = \int_{B_{\sigma}}  (\mathbb P + \mathbb P^h) D^2u \cdot D^2 \varphi + \mathbb Q D^3 u \cdot D^3 \varphi,
	\end{equation}
	where the tensors $\mathbb P, \mathbb P^h \in C^{1,1} (\overline{B_{\sigma}}, \mathcal{L}(\widehat{\M}^2, \widehat{\M}^2   )    )$, $\mathbb Q \in C^{2,1} (\overline{B_{\sigma}}, \mathcal{L}(\widehat{\M}^3, \widehat{\M}^3   )  )$ defined in \eqref{eq:M3-1}, \eqref{eq:M3-2}, \eqref{eq:M4-4} satisfy the strong convexity conditions \eqref{eq:M6-1}, \eqref{eq:M6-2}, respectively. 
	
	Then, $u \in H^6( B_{  \frac{\sigma}{8}  })$ and we have
	\begin{equation}
		\label{eq:HIR1-3}
		\|u\|_{ H^6 (B_{\frac{\sigma}{8}})} \leq C  \|u\|_{ H^3 (B_{\sigma})},
	\end{equation}
	where $C>0$ only depends on $\frac{t}{r_0}$, $\frac{l}{r_0}$, $\xi_{\mathbb Q}$, $\xi_{\mathbb P}$, $\|\mathbb P\|_{C^{1,1}( \overline{B_\sigma})}$, $\|\mathbb P^h\|_{C^{1,1}( \overline{B_\sigma})}$, $\|\mathbb Q\|_{C^{2,1}( \overline{B_\sigma})}$.
\end{theo}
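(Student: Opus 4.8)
The plan is to bootstrap the interior $H^4$--estimate of Theorem~\ref{theo:loc-int-reg-res-exTheo8-3} three times: at each step I differentiate the equation once, I lose a factor $2$ in the radius and gain one derivative on $u$, at the price of one derivative on the coefficients. One may assume $\sigma=1$, the general case following by scaling. Step~1 is immediate: Theorem~\ref{theo:loc-int-reg-res-exTheo8-3} gives $u\in H^4(B_{1/2})$ with $\|u\|_{H^4(B_{1/2})}\le C\|u\|_{H^3(B_1)}$.

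For Step~2, fix $k\in\{1,2\}$ and set $v=u_{,k}$, which lies in $H^3(B_{1/2})$ by Step~1. Differentiating \eqref{eq:HIR1-1} in $x_k$ --- rigorously, by applying to $u$ the difference--quotient manipulation already used in the proof of Theorem~\ref{theo:loc-int-reg-res-exTheo8-3} --- one finds that $v$ satisfies $a(v,\varphi)=F_k(\varphi)$ for every $\varphi\in H^3_0(B_{1/2})$, where
\begin{equation*}
F_k(\varphi)=\int_{B_{1/2}}(\mathbb P+\mathbb P^h)_{,k}\,D^2u\cdot D^2\varphi+\mathbb Q_{,k}\,D^3u\cdot D^3\varphi .
\end{equation*}
The first integral is bounded by $C\|u\|_{H^2}\|\varphi\|_{H^2}$ since $(\mathbb P+\mathbb P^h)_{,k}\in L^\infty$; since $\mathbb Q_{,k}$ is Lipschitz and $D^3u\in H^1(B_{1/2})$ by Step~1, one integration by parts in the second integral (discharging one derivative onto the $H^1$--factor $\mathbb Q_{,k}D^3u$) bounds it by $C\|u\|_{H^4}\|\varphi\|_{H^2}$. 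Hence $|F_k(\varphi)|\le C\|u\|_{H^4(B_{1/2})}\|\varphi\|_{H^2(B_{1/2})}$, so $v$ solves an interior problem with the same principal coefficients and a right--hand side which is a bounded functional on $H^2$. The interior counterpart of Theorem~\ref{theo:loc-bndry-reg-res-exTheo8-4} --- obtained from the proof of Theorem~\ref{theo:loc-int-reg-res-exTheo8-3} by carrying the datum $F_k$ through the estimates, exactly as the boundary datum $\textit{l}_+$ is carried in Theorem~\ref{theo:loc-bndry-reg-res-exTheo8-4} --- then yields $v\in H^4(B_{1/4})$ with $\|v\|_{H^4(B_{1/4})}\le C\bigl(\|u\|_{H^4(B_{1/2})}+\|v\|_{H^3(B_{1/2})}\bigr)\le C\|u\|_{H^3(B_1)}$. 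Letting $k=1,2$, this means $u\in H^5(B_{1/4})$ with the corresponding bound.

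Step~3 repeats the scheme. For $k,l\in\{1,2\}$ set $w=u_{,kl}\in H^3(B_{1/4})$. Differentiating the equation $a(v,\cdot)=F_k$ once more in $x_l$, the function $w$ solves $a(w,\varphi)=G_{kl}(\varphi)$ on $B_{1/4}$, where $G_{kl}$ collects the finitely many terms in which at least one derivative falls on $\mathbb P+\mathbb P^h$ or on $\mathbb Q$. Using $u\in H^5(B_{1/4})$: the terms carrying two derivatives of $\mathbb P+\mathbb P^h$ require only $(\mathbb P+\mathbb P^h)_{,kl}\in L^\infty$ and are bounded directly; the terms carrying two derivatives of $\mathbb Q$ require $\mathbb Q_{,kl}\in C^{0,1}$ --- this is exactly where $\mathbb Q\in C^{2,1}$ is used --- and are bounded after one integration by parts; the mixed terms are handled as in Step~2. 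Altogether $|G_{kl}(\varphi)|\le C\|u\|_{H^5(B_{1/4})}\|\varphi\|_{H^2(B_{1/4})}$, and one more application of the inhomogeneous interior regularity gives $w\in H^4(B_{1/8})$ with $\|w\|_{H^4(B_{1/8})}\le C\|u\|_{H^5(B_{1/4})}\le C\|u\|_{H^3(B_1)}$. Ranging over $k,l$ this yields $u\in H^6(B_{1/8})$ and, after scaling back, the estimate \eqref{eq:HIR1-3}; inspecting the three steps shows that the constant $C$ depends only on the quantities listed in the statement.

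The main obstacle is the bookkeeping of the regularity budget: each differentiation of the equation spends one derivative of the top--order tensor $\mathbb Q$, so precisely $C^{2,1}$ is needed to reach $H^6$, whereas for $\mathbb P,\mathbb P^h$, which multiply only the $D^2$--part, $C^{1,1}$ suffices because those lower--order terms can be estimated without integrating by parts. The only ingredient not already contained in the statements above is the inhomogeneous version of the interior estimate of Theorem~\ref{theo:loc-int-reg-res-exTheo8-3}, with right--hand side a bounded functional on $H^2$; this is routine and is proved exactly as the boundary estimate of Theorem~\ref{theo:loc-bndry-reg-res-exTheo8-4}.
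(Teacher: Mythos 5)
Your proposal is correct and follows essentially the same route as the paper: apply the interior $H^4$ result, differentiate the equation to obtain $a(u_{,p},\varphi)=\textit{l}_p(\varphi)$ with $|\textit{l}_p(\varphi)|\leq C\|u\|_{H^3(B_1)}\|D^2\varphi\|_{L^2}$ (after discharging one derivative in the $\mathbb Q$-term), rerun the difference-quotient argument with this inhomogeneous right-hand side to gain one order on a halved ball, and iterate once more to reach $H^6(B_{\sigma/8})$. The only cosmetic discrepancy is a sign in your $F_k$ (differentiating $a(u,\varphi)=0$ produces the coefficient-derivative terms with a minus sign), which does not affect any estimate.
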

\begin{proof}
We can assume, without loss of generality, $\sigma =1$. By Theorem  \ref{theo:loc-int-reg-res-exTheo8-3} we know that $u \in H^4(B_{ \frac{1}{2} }   )$. Therefore, differentiating \eqref{eq:M2-1} with respect to $x_p$, $p=1,2$, and integrating by parts, we obtain
\begin{equation}
	\label{eq:HIR2-1}
	a(u_{,p}, \varphi) = \textit{l}	_p(\varphi)	   \qquad \hbox{for every }
	\varphi \in H^3_0(B_{ \frac{1}{2}   }),
\end{equation}
where $a(\cdot, \cdot)$ is defined in \eqref{eq:HIR1-2} and 
\begin{equation}
	\label{eq:HIR2-2}
	\textit{l}	_p(\varphi)=
	\int_{ B_{ \frac{1}{2}   } } 
	(P_{ijlm}  +  P_{ijlm}^h  )_{,p} u_{,lm} \varphi_{,ij} -
	( Q_{ijklmn,p} u_{,lmn}   )_{,i} \varphi_{,jk}.	
\end{equation}
By the regularity assumptions on the coefficients and \eqref{eq:IR1-4}, we have
\begin{equation}
	\label{eq:HIR2-3}
	|\textit{l}	_p(\varphi)| \leq 
	C \| u \|_{H^3(B_1)} \| D^2 \varphi \|_{L^2 (B_{\frac{1}{2}})  },
\end{equation}
where $C>0$ is a constant only depending on $t$, $l$, $\xi_{\mathbb Q}$, $\xi_{\mathbb P}$, $\|\mathbb P\|_{C^{0,1}( \overline{B_1})}$, $\|\mathbb P^h\|_{C^{0,1}( \overline{B_1})}$, $\|\mathbb Q\|_{C^{1,1}( \overline{B_1})}$.

At this point, by \eqref{eq:HIR2-1} and \eqref{eq:HIR2-3}, we can use arguments analogous to those adopted in the proof of \eqref{eq:IR9-1} (with $u$ replaced by $u_{,p}$) to obtain 
\begin{equation}
	\label{eq:HIR2-4}
	\|D^5 u\|_{ L^2 (B_{\frac{1}{4}})} \leq C  \|u\|_{ H^3 (B_{1})}.
\end{equation}
Finally, estimate \eqref{eq:HIR1-3} follows by iterating once more the above procedure.
\end{proof}

\begin{rem}
   \label{rem:estensione anisotropo}
	Let us notice that, as it is evident {}from the proofs, Proposition \ref{prop:DirectProblem}, Theorem \ref{prop:glo-reg-res-exProp8-2} and Theorem \ref{theo:loc-int-reg-res-improved} extend to the anisotropic case, for $\mathbb{P}$, $\mathbb{P}^h$, $\mathbb{Q}$ satisfying the symmetry conditions \eqref{eq:M3-3}, \eqref{eq:M5-3}, and the strong convexity conditions \eqref{eq:M6-1}, \eqref{eq:M6-2}.

\end{rem}

\section{Doubling and Three spheres inequalities}\label{DTSI}

In this section we derive unique continuation results in the form of three-spheres and doubling inequalities for solutions to the differential inequality
\begin{eqnarray}\label{eq:diffineq}
|\Delta^3 u| \le M(|D\Delta^2 u| + \sum_{k=0}^{4} |D^k u|) \ \ \ \mbox{in} \ \ B_1 
\end{eqnarray}
where $M$ is a positive constant. Let us notice that the solutions to \eqref{eq:M2-1}, with $\mathbb{P},\mathbb{P}^h$ and $\mathbb{Q}$ given by \eqref{eq:M3-1}, \eqref{eq:M3-2} and \eqref{eq:M4-4}
respectively, satisfy \eqref{eq:diffineq}; see Lemma \ref{diffineqlemma} for a precise statement. Our method is based on Carlemann estimates. 
 
%\label{sec:Doubl--Sphere}

%This section is devoted to unique continuation results. 
%We begin by showing the following lemma whose proof is postponed to section \ref{Appendix}.

\subsection{Carleman estimates}\label{sec:Doubl-Three-Sphere}

We shall need the following results, see \cite{mrv:mat-cat}, Proposition 5.1 and inequality (5.46)  in the proof of Proposition 3.5, respectively.

\begin{prop}[{\bf Carleman estimate for $\Delta$}]\label{prop:Carlm-delta}

Let $\epsilon\in\left(0,\frac{1}{2}\right]$. Let us define
\begin{equation}
    \label{eq:24.2}
       \rho(x) = \phi_{\epsilon}\left(|x|\right), \mbox{ for } x\in B_1\setminus
\{0\},
\end{equation}
where
\begin{equation}
    \label{eq:24.3}
        \phi_{\epsilon}(s) = \frac{s}{\left(1+s^{\epsilon}\right)^{1/\epsilon}}.
\end{equation}
Then there exist $\tau_0>1$, $C>1$, only depending on $\epsilon$,
such that for every $\tau\geq \tau_0$ and for every $u\in
C^\infty_0(B_1\setminus \{0\})$
\begin{equation}
    \label{Carlm-delta}
    C\int\rho^{4-2\tau}|\Delta u|^2dx\geq\sum_{k=0}^1
\tau^{3-2k}\int\rho^{2k+\epsilon-2\tau}|D^ku|^2dx.
\end{equation}
Furthermore we have
\begin{equation}
    \label{Carlm-delta-doub}
    C\int\rho^{4-2\tau}|\Delta u|^2dx\geq
\tau^2r\int\rho^{-1-2\tau}u^2dx+\\+\sum_{k=0}^1
\tau^{3-2k}\int\rho^{2k+\epsilon-2\tau}|D^ku|^2dx
\end{equation}
for every $\tau\geq \tau_0$, for every $r\in (0, 1)$ and for every
$u\in C^\infty_0(B_{1}\setminus\ \overline{B}_{r/4})$.
\end{prop}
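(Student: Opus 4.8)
\medskip

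\noindent\textbf{Proof strategy.} The two assertions are weighted $L^2$ (Carleman) estimates for the Laplacian with the radial weight $\rho=\phi_\epsilon(|x|)$, singular at the origin, and I would prove them by the classical conjugation--commutator scheme. The plan is: (i) decompose $u$ in Fourier series in the angular variable, $u(r,\theta)=\sum_{k\in\Z}u_k(r)\,e^{ik\theta}$, so that by Parseval it suffices to prove a family of one--dimensional weighted estimates --- one for each $k\in\Z$, uniformly in $k$ --- for the radial operators $\Delta_k=\partial_r^2+\frac1r\partial_r-\frac{k^2}{r^2}$; (ii) perform the Emden--Fowler change of variable $r=e^{-t}$, under which $r^2\Delta$ becomes $\partial_t^2+\partial_\theta^2$, so that on the $k$--th mode $r^2\Delta_kv=(\partial_t^2-k^2)v$; (iii) conjugate by the weight $\rho^{-\tau}=e^{\tau f(t)}$ (times a suitable power of $|x|$, so that the resulting one--dimensional operators are (anti)self--adjoint with respect to $dt$), where $f(t)=-\log\phi_\epsilon(e^{-t})=t+\epsilon^{-1}\log(1+e^{-\epsilon t})$.

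The role of the specific profile $\phi_\epsilon$ is encoded in a few elementary identities that I would record at the outset: $f'(t)=(1+e^{-\epsilon t})^{-1}\in\left(\tfrac12,1\right)$; $f''(t)=\epsilon\,f'(1-f')>0$; and every higher derivative of $f$ is dominated by $f''$ --- indeed $f'''=\epsilon f''(1-2f')$, hence $|f'''|\le\epsilon f''$, and likewise $|f^{(4)}|\le f''$ (here $\epsilon\le\tfrac12$ is used only to keep constants uniform). Moreover $f''\asymp_\epsilon\rho^\epsilon$ throughout $B_1\setminus\{0\}$. In other words, $f$ is, in the logarithmic variable, a strictly convex Carleman weight whose convexity degenerates near $x=0$ exactly at the rate $\rho^\epsilon$ --- which is precisely the power of $\rho$ appearing on the right--hand side of \eqref{Carlm-delta}.

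After conjugation the operator splits as $P_\tau=S+A$, with $S$ formally self--adjoint (principal part $\partial_t^2$, plus the angular term $-k^2$ up to a harmless shift, plus the potential $\tau^2(f')^2$) and $A$ anti--self--adjoint (principal part $-2\tau f'\partial_t$). The identity $\|P_\tau w\|^2=\|Sw\|^2+\|Aw\|^2+\langle[S,A]w,w\rangle$ then reduces matters to a lower bound for the commutator. A direct computation of $[S,A]$ followed by integration by parts gives
\[
\langle[S,A]w,w\rangle=4\tau\!\int f''\,|w'|^2+\tau\!\int\!\bigl(4\tau^2(f')^2f''-f^{(4)}\bigr)w^2 ,
\]
and, using $f'\ge\tfrac12$, $|f^{(4)}|\le f''$ and $\tau$ large, this is $\gtrsim\tau\!\int f''|w'|^2+\tau^3\!\int f''w^2$, the angular term $-k^2$ in $S$ only improving the coercivity when $k\neq0$. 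Undoing the conjugation and the change of variables, and recalling $f''\asymp\rho^\epsilon$, this reproduces mode by mode the radial and zeroth--order parts of $\sum_{k=0}^1\tau^{3-2k}\int\rho^{2k+\epsilon-2\tau}|D^ku|^2$; the remaining (angular) part of $|Du|^2$ for $k\neq0$ is recovered by also retaining $\|Sw\|^2$, or by pairing $Sw$ with $w$, and combining. Summing over $k$ by Parseval yields \eqref{Carlm-delta}.

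For the stronger inequality \eqref{Carlm-delta-doub} I would run the same computation but keep a term that is usually discarded: when $\mathrm{supp}\,u\subset B_1\setminus\overline B_{r/4}$ one has $\rho\gtrsim r$ on $\mathrm{supp}\,u$, and retaining a boundary/lower--order contribution concentrated near $|x|\sim r$ (equivalently, inserting an additional Hardy--type multiplier adapted to the support) produces the extra nonnegative term $\tau^2r\int\rho^{-1-2\tau}u^2$ on the right. I expect the principal difficulty to be exactly the point where the estimate degenerates, namely establishing positivity of the commutator \emph{uniformly} in $\tau\ge\tau_0$ and in the angular mode $k$ near the origin; this is where the precise algebraic structure of $\phi_\epsilon$ --- the fact that all derivatives of the logarithmic weight $f$ are controlled by $f''$ --- is indispensable.
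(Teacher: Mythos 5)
First, a point of reference: the paper does not prove this proposition at all — it is quoted verbatim from \cite{mrv:mat-cat} (Proposition 5.1 there for \eqref{Carlm-delta}, and inequality (5.46) in the proof of their Proposition 3.5 for \eqref{Carlm-delta-doub}). So there is no internal proof to compare against; your sketch has to be judged against the method of the cited source. That said, your strategy for \eqref{Carlm-delta} is the standard one and is essentially how such estimates are established in that literature: angular Fourier decomposition, the logarithmic change of variable, conjugation by $\rho^{-\tau}$ times a power of $|x|$ to symmetrize with respect to $dt\,d\theta$, and positivity of the commutator $[S,A]$. The identities you record for $f(t)=t+\epsilon^{-1}\log(1+e^{-\epsilon t})$ are all correct ($f'=(1+e^{-\epsilon t})^{-1}\in(\tfrac12,1)$, $f''=\epsilon f'(1-f')=\epsilon f'\rho^{\epsilon}$, higher derivatives dominated by $f''$), and they correctly identify $f''\asymp_\epsilon\rho^{\epsilon}$ as the source of the $\rho^{\epsilon}$ gain and of the $\tau,\tau^3$ powers.

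The genuine gap is in \eqref{Carlm-delta-doub}. The extra term $\tau^2r\int\rho^{-1-2\tau}u^2$ is \emph{not} obtainable the way you describe. It is not a boundary contribution (for $u\in C^\infty_0(B_1\setminus\overline{B}_{r/4})$ every integration by parts is boundary-free), and it does not follow from \eqref{Carlm-delta} together with $\rho\gtrsim r$ on the support: comparing weights, $\tau^2r\,\rho^{-1-2\tau}\leq C\tau^2 r^{-\epsilon}\rho^{\epsilon-2\tau}$ on $\{|x|>r/4\}$, and $\tau^2r^{-\epsilon}$ is not dominated by $\tau^3$ uniformly in $r$. What is actually needed is an additional coercivity input that is indeed ``usually discarded'', but of a specific kind: one must retain either $\|Aw\|^2$ or part of the first--order term $\tau\int f''|w'|^2$ in the commutator, and combine it with a one--dimensional Poincar\'e--type inequality on the support interval $0<t<\log(4/r)$ in the logarithmic variable. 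It is the length $\log(4/r)$ of that interval (entering squared, and then absorbed via $(\log(4/r))^{-2}\geq cr$ together with $\rho\leq|x|\leq 1$) that produces the factor $r$ and the more singular weight $\rho^{-1-2\tau}$. As written, your sentence gestures at ``a Hardy--type multiplier adapted to the support'' without identifying this mechanism, and the phrase ``contribution concentrated near $|x|\sim r$'' points in the wrong direction; this step needs to be made explicit for the argument to close. A secondary, more routine point: recovering the full $|Du|^2$ (including the angular part, uniformly over all modes $k$, not just $k=0,1$) from $\|Sw\|^2$ and the cross terms generated by $w'=\rho^{-\tau}(\cdots)$ should also be carried out in detail, but that is standard bookkeeping rather than a conceptual obstacle.
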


\begin{rem}
   \label{rem:stima_rho}
    Let us notice that
    \begin{equation*}
\frac{s}{2^{1/\epsilon}}\leq \phi_{\epsilon}\leq s, \quad \forall s, 0\leq s\leq 1,
\end{equation*}
\begin{equation}
    \label{eq:stima_rho}
    \frac{|x|}{2^{1/\epsilon}}\leq \rho(x)\leq |x|, \quad \forall x\in B_1.
\end{equation}

\end{rem}

\bigskip

\begin{prop} [{\bf Carleman estimate for $\Delta^2$}]
    \label{prop:Carleman}
Let $\epsilon\in\left(0,\frac{1}{2}\right)$. Let $\rho$ and
$\phi_{\epsilon}$ the same functions defined in Proposition
\ref{prop:Carlm-delta}. Then there exist absolute constants
$\overline{\tau}>1$, $C>1$ depending on $\varepsilon$ only, such
that

\begin{equation}
    \label{eq:24.4}
    \sum_{k=0}^3\tau^{6-2k}\int \rho^{2k+2\epsilon-2\tau}|D^k
U|^2dx  \leq C
    \int\rho^{8-2\tau}(\Delta^2 U)^2dx,
\end{equation}
for every $\tau\geq \overline{\tau}$ and
for every $U\in C^\infty_0(B_1\setminus \{0\})$.
\end{prop}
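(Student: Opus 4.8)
The plan is to derive the sixth-order Carleman estimate \eqref{eq:24.4} by iterating the second-order estimate \eqref{Carlm-delta} of Proposition \ref{prop:Carlm-delta} twice, and then upgrading from powers of the Laplacian to full derivatives by weighted Caccioppoli-type identities. Throughout $U\in C^\infty_0(B_1\setminus\{0\})$, so $\Delta U\in C^\infty_0(B_1\setminus\{0\})$ as well and every integration by parts below is free of boundary terms, while $\rho$ stays smooth and positive on the support of $U$.

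First I would apply \eqref{Carlm-delta} to $\Delta U$ with parameter $\tau-2$ (so that $4-2(\tau-2)=8-2\tau$) and, separately, to $U$ with parameter $\tau-\tfrac{\epsilon}{2}$ (so that $\epsilon-2(\tau-\tfrac{\epsilon}{2})=2\epsilon-2\tau$); both choices are admissible once $\tau\ge\tau_0+2$, since $\epsilon\le\tfrac12$. This produces
\begin{equation*}
  C\!\int\!\rho^{8-2\tau}(\Delta^2U)^2\,dx\ge(\tau-2)^3\!\int\!\rho^{4+\epsilon-2\tau}|\Delta U|^2\,dx+(\tau-2)\!\int\!\rho^{6+\epsilon-2\tau}|D\Delta U|^2\,dx,
\end{equation*}
\begin{equation*}
  C\!\int\!\rho^{4+\epsilon-2\tau}|\Delta U|^2\,dx\ge(\tau-\tfrac{\epsilon}{2})^3\!\int\!\rho^{2\epsilon-2\tau}U^2\,dx+(\tau-\tfrac{\epsilon}{2})\!\int\!\rho^{2+2\epsilon-2\tau}|DU|^2\,dx.
\end{equation*}
Chaining the second inequality into the first and absorbing $\tau-2\sim\tau$, $\tau-\tfrac{\epsilon}{2}\sim\tau$ for $\tau$ large (depending only on $\epsilon$) gives, for a new constant $C$, a control of $\tau^6\!\int\!\rho^{2\epsilon-2\tau}U^2\,dx+\tau^4\!\int\!\rho^{2+2\epsilon-2\tau}|DU|^2\,dx+\tau^3\!\int\!\rho^{4+\epsilon-2\tau}|\Delta U|^2\,dx+\tau\!\int\!\rho^{6+\epsilon-2\tau}|D\Delta U|^2\,dx$ by $C\!\int\!\rho^{8-2\tau}(\Delta^2U)^2\,dx$. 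This already yields the $k=0$ and $k=1$ terms of \eqref{eq:24.4}.

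For the $k=2$ and $k=3$ terms I would pass from $\Delta U$ to $D^2U$ and from $D\Delta U$ to $D^3U$ using, for $U\in C^\infty_0$ and a smooth weight $g$, the identity $\int g|D^2U|^2\,dx=\int g(\Delta U)^2\,dx+\int\nabla g\cdot\nabla U\,\Delta U\,dx+\tfrac12\int\Delta g|\nabla U|^2\,dx$ and an analogous second-order identity relating $\int g|D^3U|^2\,dx$ to $\int g|D\Delta U|^2\,dx$ plus commutator terms in $D^2U$ and $DU$ (both obtained by integrating by parts in $\int g\,U_{,ij}U_{,ij}\,dx$, resp. $\int g\,U_{,ijk}U_{,ijk}\,dx$). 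Taking $g=\rho^{4+2\epsilon-2\tau}$, resp. $g=\rho^{6+2\epsilon-2\tau}$, and using that $0<\phi_\epsilon'\le1$ — whence $|\nabla\rho|\le1$ and $|\Delta\rho|\le C\rho^{-1}$, so that $\nabla g$ and $\Delta g$ lose one, resp. two, powers of $\rho$ while gaining a factor $\tau$, resp. $\tau^2$ — together with $\rho\le1$ to compare $\rho^{4+2\epsilon}\le\rho^{4+\epsilon}$ and $\rho^{6+2\epsilon}\le\rho^{6+\epsilon}$, one estimates all commutator terms by Cauchy--Schwarz against the quantities already controlled in the previous paragraph, with enough spare powers of $\tau$ to absorb them. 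This yields $\tau^2\!\int\!\rho^{4+2\epsilon-2\tau}|D^2U|^2\,dx\le C\!\int\!\rho^{8-2\tau}(\Delta^2U)^2\,dx$ and $\int\!\rho^{6+2\epsilon-2\tau}|D^3U|^2\,dx\le C\!\int\!\rho^{8-2\tau}(\Delta^2U)^2\,dx$; summing the four contributions and choosing $\overline\tau$ large enough (depending only on $\epsilon$) proves \eqref{eq:24.4}.

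The main obstacle is precisely this last passage. The two iterations naturally deliver the weight exponents $4+\epsilon$ and $6+\epsilon$ on $\rho$, whereas \eqref{eq:24.4} requires $4+2\epsilon$ and $6+2\epsilon$, and $\Delta$ controls only the trace of the Hessian; one must therefore verify that the exponent mismatch goes the favourable way ($\rho\le1$), that $\rho$ has bounded gradient and only a mildly singular Laplacian, and — most importantly — that each commutator produced by the Caccioppoli identities is genuinely lower order in derivatives and loses at most $O(\tau^2)$, so that the large gains $\tau^3,\tau^4,\tau^6$ coming from the iteration leave room to absorb everything for $\tau\ge\overline\tau$.
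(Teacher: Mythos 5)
Your proof is correct. The paper does not actually prove this proposition — it is quoted from \cite{mrv:mat-cat} — but your argument (iterating the second-order estimate \eqref{Carlm-delta}, applied to $\Delta U$ with parameter $\tau-2$ and to $U$ with parameter $\tau-\tfrac{\epsilon}{2}$, and then upgrading $|\Delta U|$, $|D\Delta U|$ to $|D^2U|$, $|D^3U|$ via the identity \eqref{identity1}--\eqref{identity2} applied to $U$ and to $\partial_iU$) is precisely the technique the authors themselves use to prove the analogous sixth-order estimate, Proposition \ref{prop:Carleman 3Laplace}. One remark: your commutator terms, $C\tau^4\int\rho^{2+2\epsilon-2\tau}|DU|^2$ at the $k=2$ stage and $C\tau^2\int\rho^{4+2\epsilon-2\tau}|D^2U|^2$ at the $k=3$ stage, are bounded by constants times quantities already controlled at the previous stage, so in fact no absorption into the left-hand side and no restriction to a smaller ball are needed here, in contrast with the $\Delta^3$ case where the authors must shrink the support to $B_{R_1}$.
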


\bigskip

\begin{lem}\label{formule}
Given $\zeta\in C^2(B_1\setminus\{0\})$ and $u\in
C^\infty_0(B_1\setminus\{0\})$, the following identities hold
true:
\begin{subequations}
\label{identity}
\begin{equation}
\label{identity1} \int \zeta u\Delta u=-\int(\zeta|D u|^2+(D
u\cdot D \zeta)u),
\end{equation}
\begin{equation}
\label{identity2} \int \zeta\sum_{j,k=1}^2|\partial_{jk}u|^2=\int(-D^2\zeta D u\cdot
D u+\Delta \zeta|D u|^2+\zeta(\Delta u)^2),
\end{equation}
\begin{gather}
\label{identity3}
\int \zeta\sum_{i,j,k=1}^2|\partial_{ijk}u|^2=-\int \zeta\Delta u \Delta^2
u+\\\nonumber
+\int(-\hbox{tr}(D^2 uD^2 \zeta D^2u)
+\Delta \zeta|D^2 u|^2+\frac{1}{2}\Delta \zeta(\Delta u)^2).
\end{gather}
\end{subequations}
\end{lem}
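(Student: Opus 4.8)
The plan is to establish the three identities one after another, each by elementary integration by parts. Since $u\in C_0^\infty(B_1\setminus\{0\})$ its support is a compact subset of the open set $B_1\setminus\{0\}$, so no boundary terms ever appear, and the hypothesis $\zeta\in C^2$ is exactly what is needed because $\zeta$ will be differentiated at most twice (on the compact set $\mathrm{supp}\,u$ the functions $\zeta$, $D\zeta$, $D^2\zeta$ are continuous and bounded). For \eqref{identity1} I would simply write $\int\zeta u\Delta u=\int\zeta u\,\partial_j\partial_j u$ and integrate by parts once in $x_j$, getting $-\int(\partial_j\zeta\,u+\zeta\,\partial_j u)\partial_j u$, which is the claimed right-hand side.

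For \eqref{identity2} I would start from $\int\zeta\,\partial_{jk}u\,\partial_k(\partial_j u)$ and integrate by parts in $x_k$, producing a term $-\int\partial_k\zeta\,\partial_{jk}u\,\partial_j u$ and a term $-\int\zeta\,\partial_j(\Delta u)\,\partial_j u$. In the first term I use $\partial_{jk}u\,\partial_j u=\tfrac12\partial_k|Du|^2$ and integrate by parts again to obtain $\tfrac12\int\Delta\zeta\,|Du|^2$. In the second term one more integration by parts in $x_j$ yields $\int\zeta(\Delta u)^2+\int(D\zeta\cdot Du)\Delta u$, and integrating $\int(D\zeta\cdot Du)\Delta u$ by parts once more, again with $\partial_{jk}u\,\partial_j u=\tfrac12\partial_k|Du|^2$, gives $-\int D^2\zeta\,Du\cdot Du+\tfrac12\int\Delta\zeta\,|Du|^2$. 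Collecting the three contributions (the two occurrences of $\tfrac12\int\Delta\zeta\,|Du|^2$ combining to coefficient one) gives \eqref{identity2}.

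For \eqref{identity3} the key point is to avoid repeating a long computation: I would observe that $\sum_{i,j,k}|\partial_{ijk}u|^2=\sum_i\sum_{j,k}|\partial_{jk}(\partial_i u)|^2$, and since each $\partial_i u$ again lies in $C_0^\infty(B_1\setminus\{0\})$ one may apply \eqref{identity2} with $u$ replaced by $\partial_i u$ and sum over $i$. Using $\sum_i\partial_{ab}\zeta\,\partial_{ai}u\,\partial_{bi}u=\mathrm{tr}(D^2u\,D^2\zeta\,D^2u)$, $\sum_{i,j}|\partial_{ij}u|^2=|D^2u|^2$ and $\sum_i(\Delta\partial_i u)^2=|D\Delta u|^2$, this yields $\int\zeta\sum_{i,j,k}|\partial_{ijk}u|^2=\int\bigl(-\mathrm{tr}(D^2u\,D^2\zeta\,D^2u)+\Delta\zeta\,|D^2u|^2+\zeta\,|D\Delta u|^2\bigr)$. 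It then remains to rewrite $\int\zeta|D\Delta u|^2$: applying \eqref{identity1} with $u$ replaced by the $C_0^\infty$ function $\Delta u$ gives $\int\zeta|D\Delta u|^2=-\int\zeta\,\Delta u\,\Delta^2 u-\int(D\Delta u\cdot D\zeta)\Delta u$, and the last integral equals $-\tfrac12\int\Delta\zeta(\Delta u)^2$ after one integration by parts. Substituting gives exactly \eqref{identity3}. The whole argument is routine; the only care needed is bookkeeping of the repeated integrations by parts in \eqref{identity2}, and the (mildly clever) recognition that \eqref{identity3} reduces to \eqref{identity2} applied to $\partial_i u$ together with \eqref{identity1} applied to $\Delta u$.
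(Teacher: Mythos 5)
Your proof is correct and complete: all three identities follow from the integrations by parts exactly as you carry them out, the compact support of $u$ in $B_1\setminus\{0\}$ justifies the absence of boundary terms, and the reduction of \eqref{identity3} to \eqref{identity2} applied to $\partial_i u$ together with \eqref{identity1} applied to $\Delta u$ is a clean way to organize the computation. The paper itself does not prove the lemma but refers to \cite[pp. 2351--2352]{MRV2007}, where the same elementary integration-by-parts argument is carried out, so your proposal matches the intended proof.
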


For a proof of the above identities see \cite[pp. 2351--2352]{MRV2007}.

\begin{prop} [{\bf Carleman estimate for $\Delta^3$}]
    \label{prop:Carleman 3Laplace}
Let $\epsilon\in\left(0,\frac{1}{5}\right]$. Let $\rho$ and
$\phi_{\epsilon}$ the same functions defined in Proposition
\ref{prop:Carlm-delta}.

Then there exist constants $\overline{\tau}>1$, $C>1$ and
$R_1\in (0,1]$ only depending on $\epsilon$ such that
\begin{equation}
 \begin{aligned}\label{Carleman0-3L}
C\int\rho^{4-2\tau}|\Delta^3 u|^2dx\geq
\tau\int\rho^{2+\epsilon-2\tau}\left|D\Delta^2u\right|^2dx+
 \sum_{k=0}^4\tau^{9-2k}\int \rho^{2k+5\epsilon-8-2\tau}|D^k
u|^2dx,
\end{aligned}
\end{equation}
for every $\tau\geq \overline{\tau}$ and for every $u\in
C^\infty_0(B_{R_1}\setminus\ \{0\})$.

Furthermore we have
\begin{equation}
 \begin{aligned}
    \label{eq:24.4-3L}
    C\int\rho^{4-2\tau}|\Delta^3 u|^2dx&\geq
\tau\int\rho^{2+\epsilon-2\tau}\left|D\Delta^2u\right|^2dx+
 \sum_{k=0}^4\tau^{9-2k}\int \rho^{2k+5\epsilon-8-2\tau}|D^k
u|^2dx+\\&+\tau^6r^3\int\rho^{-11-2\tau}u^2dx,
\end{aligned}
\end{equation}
\noindent for every $\tau\geq \overline{\tau}$, for every $r\in (0,
R_1)$ and for every $u\in C^\infty_0(B_{R_1}\setminus\
\overline{B}_{r/4})$.
\end{prop}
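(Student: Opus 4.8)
The plan is to derive both \eqref{Carleman0-3L} and \eqref{eq:24.4-3L} by iterating, after suitable shifts of the large parameter $\tau$, the Carleman estimates for $\Delta$ and $\Delta^2$ that are already at our disposal (Propositions \ref{prop:Carlm-delta} and \ref{prop:Carleman}). Throughout one uses that $\rho\le|x|\le 1$ in $B_1$ (Remark \ref{rem:stima_rho}), so that $\rho^a\le\rho^b$ whenever $a\ge b$; this lets one replace a weight $\rho^a$ by a less singular $\rho^b$ inside any integral at the cost of a harmless constant. First I would put $w=\Delta^2 u\in C_0^\infty(B_{R_1}\setminus\{0\})$ and apply \eqref{Carlm-delta} to $w$ with parameter $\tau$ (and its sharper form \eqref{Carlm-delta-doub} when $u$ is supported in $B_{R_1}\setminus\overline B_{r/4}$). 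Since $\Delta w=\Delta^3 u$, this already yields the term $\tau\int\rho^{2+\epsilon-2\tau}|D\Delta^2u|^2$ of \eqref{Carleman0-3L}, and shows that $\Delta^2u$ (with a gain $\tau^3$ and weight $\rho^{\epsilon-2\tau}$) and $D\Delta^2u$ (with a gain $\tau$ and weight $\rho^{2+\epsilon-2\tau}$) are controlled by $\int\rho^{4-2\tau}|\Delta^3u|^2$; using \eqref{Carlm-delta-doub} one gets in addition $\tau^2r\int\rho^{-1-2\tau}|\Delta^2u|^2$.

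Next, to recover $\sum_{k=0}^4\tau^{9-2k}\int\rho^{2k+5\epsilon-8-2\tau}|D^ku|^2$ I would feed $\Delta^2u$ into the $\Delta^2$-Carleman estimate \eqref{eq:24.4} with a shifted parameter. Applying \eqref{eq:24.4} to $u$ with $\tau$ replaced by $\widetilde\tau=\tau+4-\tfrac32\epsilon$ gives
\begin{equation*}
\sum_{k=0}^3\widetilde\tau^{6-2k}\int\rho^{2k+5\epsilon-8-2\tau}|D^ku|^2\le C\int\rho^{3\epsilon-2\tau}(\Delta^2u)^2\le C\int\rho^{\epsilon-2\tau}(\Delta^2u)^2,
\end{equation*}
the exponents being chosen exactly so that $2k+2\epsilon-2\widetilde\tau=2k+5\epsilon-8-2\tau$ and $8-2\widetilde\tau=3\epsilon-2\tau$; multiplying by $\tau^3$ and using the first step (together with $\widetilde\tau^{6-2k}\ge\tau^{6-2k}$ for $k\le3$, $\tau$ large) produces the $k=0,1,2,3$ terms. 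The $k=4$ term is the only one not obtainable from $\Delta^2u$ alone: I would get it by applying \eqref{eq:24.4} to $\partial_pu$, $p=1,2$, with $\tau$ replaced by $\widehat\tau=\tau+3-\tfrac32\epsilon$ and summing over $p$; since $\Delta^2(\partial_pu)=\partial_p\Delta^2u$ and $\sum_p|D^3\partial_pu|^2\ge|D^4u|^2$, its $k=3$ term yields $\int\rho^{5\epsilon-2\tau}|D^4u|^2\le C\int\rho^{2+\epsilon-2\tau}|D\Delta^2u|^2\le C\tau^{-1}\int\rho^{4-2\tau}|\Delta^3u|^2$. (Alternatively $D^4u$ can be recovered from $(\Delta^2u)^2$ by integration by parts in the spirit of Lemma \ref{formule}, absorbing the lower-order remainders into the surplus powers of $\tau$.) Splitting the single copy of $\tau\int\rho^{2+\epsilon-2\tau}|D\Delta^2u|^2$ so that it serves both as a term of \eqref{Carleman0-3L} and as the source of the $k=4$ term, and choosing $\overline\tau$ large and $R_1$ suitably so that all shifted parameters exceed the thresholds of Propositions \ref{prop:Carlm-delta}--\ref{prop:Carleman} and all weight comparisons hold, one obtains \eqref{Carleman0-3L}.

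For the sharper inequality \eqref{eq:24.4-3L} I would keep, at the first step, the extra term coming from \eqref{Carlm-delta-doub}, and then apply \eqref{Carlm-delta-doub} again — successively to $\Delta^2u$, to $\Delta u$, and to $u$ with parameters $\tau$, $\tau+\tfrac52$, $\tau+5$, each of which has support in $B_1\setminus\overline B_{r/4}$ — chaining the terms $\tau_j^2r\int\rho^{-1-2\tau_j}(\cdot)^2$ through the identities $\rho^{-1-2\tau_j}=\rho^{4-2\tau_{j+1}}$ (which hold exactly for these parameters). This extracts precisely $\tau^6r^3\int\rho^{-11-2\tau}u^2$, and adding it to \eqref{Carleman0-3L} gives \eqref{eq:24.4-3L}; the accumulated factor $\tau^2(\tau+\tfrac52)^2(\tau+5)^2r^3$ dominates $\tau^6r^3$ for $\tau$ large.

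The main obstacle is the recovery of the full fourth-order term $|D^4u|^2$: the $\Delta^2$-Carleman estimate applied to $u$ controls only derivatives up to order three, and neither $\Delta^2u$ nor $D^2\Delta u$ determines all five components of $D^4u$, so one is forced to differentiate (apply the estimate to $\partial_pu$) or to integrate by parts. Everything else is careful bookkeeping: the shifts $\widetilde\tau,\widehat\tau,\tau+\tfrac52,\tau+5$ must be picked so that the weight exponents match on the nose (as displayed above they do), and one must check that the accompanying powers of $\tau$ are at least as large as claimed, which holds for $\tau$ large since each shift is by a positive, essentially $\epsilon$-independent constant.
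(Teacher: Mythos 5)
Your proposal is correct, and its skeleton coincides with the paper's: apply the $\Delta$-Carleman estimate \eqref{Carlm-delta} to $\Delta^2u$, feed the resulting control of $\tau^3\int\rho^{\epsilon-2\tau}(\Delta^2u)^2$ into the $\Delta^2$-Carleman estimate \eqref{eq:24.4} with a shifted parameter to recover $D^ku$ for $k\le3$, and obtain the extra term in \eqref{eq:24.4-3L} by chaining \eqref{Carlm-delta-doub} three times with parameters $\tau$, $\tau+\tfrac52$, $\tau+5$ exactly as you describe. The one place where you genuinely diverge is the recovery of the $k=4$ term. The paper does not differentiate: it uses the Rellich-type identity \eqref{identity2} twice (first with $\Delta u$, then with $\partial^2_{hk}u$, in place of $u$, with weights $\zeta=\rho^{4\epsilon-2\tau}$ and $\zeta=\rho^{5\epsilon-2\tau}$) to pass from $(\Delta^2u)^2$ to $|D^2\Delta u|^2$ and then to $|D^4u|^2$; each application produces a remainder of size $\tau^3\int\rho^{-2+\sigma-2\tau}|D^3u|^2$ that is absorbed into the already-won $k=3$ term only after restricting the support to $B_{R_1}$ with $R_1$ small depending on $\epsilon$ — this is precisely where the radius $R_1$ in the statement comes from. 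You instead apply \eqref{eq:24.4} to $\partial_pu$ with the shift $\widehat\tau=\tau+3-\tfrac32\epsilon$ and bound its right-hand side by the $D\Delta^2u$ term already produced in the first step; the exponent bookkeeping you display is correct, $\sum_p|D^3\partial_pu|^2\ge|D^4u|^2$ holds with the paper's norm convention, and splitting the $D\Delta^2u$ term in two halves is legitimate. Your route avoids Lemma \ref{formule} entirely and in fact works with $R_1=1$ (no absorption is needed), at the price of differentiating the equation — harmless here since the principal part is exactly $\Delta^3$ with constant coefficients, but less robust than the paper's integration-by-parts argument if one wanted variable leading coefficients. Your parameter shift $\widetilde\tau=\tau+4-\tfrac32\epsilon$ also lands directly on the weight $\rho^{2k+5\epsilon-8-2\tau}$ of the statement, whereas the paper's shift $\tau\mapsto\tau+4-\tfrac{\epsilon}{2}$ yields the slightly stronger intermediate weight $\rho^{2k+3\epsilon-8-2\tau}$ before weakening it; both are fine.
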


\begin{proof}
Let us apply estimate \eqref{Carlm-delta} to $\Delta^3
u=\Delta\left(\Delta^2u\right)$ to obtain (for brevity we omit $dx$
in the integrals)
\begin{equation}
 \begin{aligned}\label{2-1-3L}
C\int\rho^{4-2\tau}|\Delta^3 u|^2&\geq
\tau\int\rho^{2+\epsilon-2\tau}\left| D\Delta^2u \right|^2+\tau^3\int\rho^{\epsilon-2\tau}\left|\Delta^2u\right|^2,
\end{aligned}
\end{equation}
for every $\tau\geq \tau_0$ and for every $u\in
C^\infty_0(B_1\setminus \{0\})$, where $C$ depends on $\epsilon$
only. Now, in order to estimate {}from below the second term at
right hand side of \eqref{2-1-3L}, we apply estimate \eqref{eq:24.4}.
To do this, we change $\tau$ in $4-\frac{\varepsilon}{2}+\tau$ in
\eqref{eq:24.4}. We have

\begin{equation}\label{3-1-3L}
C\int\rho^{\epsilon-2\tau}|\Delta^2 u|^2\geq
\sum_{k=0}^3\tau^{6-2k}\int \rho^{2k+3\epsilon-8-2\tau}|D^k u|^2,
\end{equation}
for every $\tau\geq \overline{\tau}$ and for every $u\in
C^\infty_0(B_1\setminus \{0\})$. By \eqref{2-1-3L} and
\eqref{3-1-3L} we get
\begin{equation}\label{1-2-3L}
% \begin{aligned}\label{1-2-3L}
C\int\rho^{4-2\tau}|\Delta^3 u|^2\geq
\tau\int\rho^{2+\epsilon-2\tau}\left|D\Delta^2u\right|^2+\frac{\tau^3}{2}\int\rho^{\epsilon-2\tau}\left|\Delta^2u\right|^2+
+\frac{1}{2}\sum_{k=0}^3\tau^{9-2k}\int
\rho^{2k+3\epsilon-8-2\tau}|D^k u|^2.
%\end{aligned}
\end{equation}
Now we need to estimate {}from below the second term on the right hand
side of \eqref{1-2-3L}. By \eqref{identity2} (just writing $\Delta
u$ instead of $u$) we have
\begin{equation}
 \begin{aligned}\label{2-2-3L}
 \int \zeta(\Delta^2 u)^2 &=\int  \zeta|D^2\Delta u|^2 -\int\left(\Delta \zeta|D \Delta u|^2-D^2\zeta D \Delta u\cdot D
\Delta u\right).
\end{aligned}
\end{equation}
Furthermore let $\sigma\geq \epsilon$ be a number that we will
choose later. Since $\rho \leq 1$ we have trivially
\begin{equation}\label{1-3-3L}
\int\rho^{\epsilon-2\tau}\left|\Delta^2u\right|^2\geq
\int\rho^{\sigma-2\tau}\left|\Delta^2u\right|^2.
\end{equation}
Now, by choosing $$\zeta=\rho^{\sigma-2\tau},$$ we have for
$\tau\geq 1$

$$\left|D^2\zeta\right|\leq C\tau^2\rho^{-2+\sigma-2\tau},$$
where $C>0$ only depends on $\epsilon$.

Hence, by \eqref{2-2-3L} and \eqref{1-3-3L} we have
\begin{equation}
 \begin{aligned}\label{2-3-3L}
 \int\rho^{\epsilon-2\tau}\left|\Delta^2u\right|^2 &\geq \int
 \rho^{\sigma-2\tau}\left|D^2\Delta u\right|^2-C_{\ast}\tau^2\int
 \rho^{-2+\sigma-2\tau}\left|D^3 u\right|^2,
\end{aligned}
\end{equation}
where $C_{\ast}$ only depends on $\epsilon$. By \eqref{2-3-3L} we
have trivially
\begin{equation}\label{2n-3-3L}
 %\begin{aligned}\label{2n-3-3L}
 \tau^3\int\rho^{\epsilon-2\tau}\left|\Delta^2u\right|^2 \geq
 \tau\int\rho^{\epsilon-2\tau}\left|\Delta^2u\right|^2\geq
 \tau\int
 \rho^{\sigma-2\tau}\left|D^2\Delta u\right|^2-C_{\ast}\tau^3\int
 \rho^{-2+\sigma-2\tau}\left|D^3 u\right|^2.
%\end{aligned}
\end{equation}
Now, by \eqref{1-2-3L} and \eqref{2n-3-3L} we have
\begin{eqnarray}\label{3-3-3L}
% \begin{aligned}\label{3-3-3L}
&&C\int\rho^{4-2\tau}|\Delta^3 u|^2\geq
\tau\int\rho^{2+\epsilon-2\tau}\left|D\Delta^2u\right|^2+\tau \int
\rho^{\sigma-2\tau}\left|D^2\Delta u\right|^2+\\
&&+\underset{J
}{\underbrace{\left(-C_{\ast}\tau^3\int
 \rho^{-2+\sigma-2\tau}\left|D^3
 u\right|^2+\tau^3\int\rho^{-2+3\epsilon-2\tau}\left|D^3u\right|^2\right)}}
 +\sum_{k=0}^2\tau^{9-2k}\int \rho^{2k+3\epsilon-8-2\tau}|D^k \nonumber
u|^2dx,
%\end{aligned}
\end{eqnarray}
for every $\tau\geq \overline{\tau}$ and for every $u\in
C^\infty_0(B_1\setminus \{0\})$. Now, let us choose
$$\sigma=4\epsilon$$ and denote
$$R_0=\left(1/2C_{\ast}\right)^{1/\epsilon}.$$ Taking into account \eqref{eq:stima_rho}, we have, for every $u\in
C^\infty_0(B_{R_0}\setminus \{0\})$,
\begin{equation}\label{3n-3-3L}
J=\tau^3\int\left(-C_{\ast}\rho^{\epsilon}+1\right)\rho^{-2+3\epsilon-2\tau}
\left|D^3u\right|^2\geq
\frac{\tau^3}{2}\int\rho^{-2+3\epsilon-2\tau}\left|D^3u\right|^2.
\end{equation}
By \eqref{3-3-3L} and \eqref{3n-3-3L} we get
\begin{equation}\label{2-5-3L}
% \begin{aligned}\label{2-5-3L}
C\int\rho^{4-2\tau}|\Delta^3 u|^2\geq
\tau\int\rho^{2+\epsilon-2\tau}\left|D\Delta^2u\right|^2+\tau \int
\rho^{4\epsilon-2\tau}\left|D^2\Delta u\right|^2
 +\sum_{k=0}^3\tau^{9-2k}\int \rho^{2k+3\epsilon-8-2\tau}|D^k
u|^2,
%\end{aligned}
\end{equation}
for every $\tau\geq \overline{\tau}$ and for every $u\in
C^\infty_0(B_{R_0}\setminus \{0\})$.

In order to obtain the term with $\left|D^4u\right|^2$ we use again
\eqref{identity2} in the following form (just writing
$\partial^2_{hk} u$, $h,k=1,2$  instead of $u$)

\begin{equation}
 %\begin{aligned}
\label{identity2-n} \int \zeta(\Delta
\partial^2_{hk} u)^2=\int\zeta\left|D^2\partial^2_{hk} u\right|^2-\int \left(\Delta \zeta\left|D
\partial^2_{hk} u\right|^2-D^2\zeta D \partial^2_{hk} u\cdot D \partial^2_{hk} u \right)
%\end{aligned}
\end{equation}
for $h,k=1,2$. By choosing
$$\zeta=\rho^{5\epsilon-2\tau}$$
we have
$$\left|D^2\zeta\right|\leq C\tau^2\rho^{-2+5\epsilon-2\tau},$$
where $C>0$ only depends on $\epsilon$. Hence, by \eqref{identity2-n}
we have
\begin{equation}\label{pag8}
 %\begin{aligned}
 \tau\int \rho^{4\epsilon-2\tau} \left|D^2\Delta
u\right|^2\geq \tau\int \rho^{5\epsilon-2\tau} \left|D^2\Delta
u\right|^2\geq \tau\int \rho^{5\epsilon-2\tau} \left|D^4
u\right|^2-\widetilde{C}\tau^3 \int \rho^{-2+5\epsilon-2\tau}
\left|D^3u\right|^2.
%\end{aligned}
\end{equation}
By \eqref{2-5-3L} and \eqref{pag8} we have that 
\begin{eqnarray}\label{1-8-3L}
 %\begin{aligned}\label{1-8-3L}
&&C\int\rho^{4-2\tau}|\Delta^3 u|^2\geq
\tau\int\rho^{2+\epsilon-2\tau}\left|D\Delta^2u\right|^2+\tau \int
\rho^{5\epsilon-2\tau}\left|D^4
u\right|^2+\nonumber\\
&&\underset{\widetilde{J}
}{\underbrace{\tau^3\int\left(-\widetilde{C}\rho^{2\epsilon}+1\right)\rho^{-2+3\epsilon-2\tau}\left|D^3u\right|^2}}
 +\sum_{k=0}^2\tau^{9-2k}\int \rho^{2k+3\epsilon-8-2\tau}|D^k
u|^2.
%\end{aligned}
\end{eqnarray}
Let
$$R_1=\min\left\{R_0,\left(1/2\widetilde{C}\right)^{1/2\epsilon}\right\}.$$
Taking into account \eqref{eq:stima_rho}, we have, for every $u\in
C^\infty_0(B_{R_1}\setminus \{0\})$,
\begin{equation}\label{1n-8-3L}
\widetilde{J}=\tau^3\int\left(-\widetilde{C}\rho^{2\epsilon}+1\right)\rho^{-2+3\epsilon-2\tau}
\left|D^3u\right|^2\geq
\frac{\tau^3}{2}\int\rho^{-2+3\epsilon-2\tau}\left|D^3u\right|^2.
\end{equation}
By \eqref{1-8-3L} and \eqref{1n-8-3L} we get
\begin{equation}
 \begin{aligned}\label{1nn-8-3L}
C\int\rho^{4-2\tau}|\Delta^3 u|^2\geq
\tau\int\rho^{2+\epsilon-2\tau}\left|D\Delta^2u\right|^2+
 \sum_{k=0}^4\tau^{9-2k}\int \rho^{2k+5\epsilon-8-2\tau}|D^k
u|^2,
\end{aligned}
\end{equation}
for every $\tau\geq \overline{\tau}$ and for every $u\in
C^\infty_0(B_{R_1}\setminus \{0\})$. Hence \eqref{Carleman0-3L} is
proved.

Now we prove \eqref{eq:24.4-3L}. Let us apply
\eqref{Carlm-delta-doub} in the following form

\begin{equation}\label{Carlm-delta-doub-1}
    \tau^2r\int\rho^{-1-2\tau}v^2dx\leq C\int\rho^{4-2\tau}|\Delta v|^2,
\end{equation}
for every $v\in C^\infty_0(B_{R_1}\setminus\ \overline{B}_{r/4})$.

If $v=\Delta^2u$ then \eqref{Carlm-delta-doub-1} gives
\begin{equation}\label{Carlm-delta-doub-2}
\int\rho^{4-2\tau}|\Delta^3 u|^2dx\geq
C^{-1}\tau^2r\int\rho^{-1-2\tau}|\Delta^2 u|^2.
\end{equation}
If $v=\Delta u$ then \eqref{Carlm-delta-doub-1} gives
\begin{eqnarray}\label{Carlm-delta-doub-3}
 %\begin{aligned}\label{Carlm-delta-doub-3}
\int\rho^{-1-2\tau}|\Delta^2
u|^2&=&\int\rho^{4-2\left(\frac{5}{2}+\tau\right)}|\Delta^2
u|^2\geq C^{-1}\tau^2r\int\rho^{-6-2\tau}|\Delta
u|^2=C^{-1}\tau^2r\int\rho^{4-2(5+\tau)}|\Delta u|^2\nonumber\\
&\geq
&C^{-2}\tau^4r^2\int\rho^{-11-2\tau}u^2.
%\end{aligned}
\end{eqnarray}
Hence, by \eqref{Carlm-delta-doub-2} and \eqref{Carlm-delta-doub-3}
we get
\begin{equation}\label{Carlm-delta-doub-4}
\int\rho^{4-2\tau}|\Delta^3 u|^2\geq
C^{-3}\tau^6r^3\int\rho^{-11-2\tau}u^2dx,
\end{equation}
for every $u\in C^\infty_0(B_{R_1}\setminus\ \overline{B}_{r/4})$ and $\tau\geq
\overline{\tau}$.

Finally, by \eqref{1nn-8-3L} and \eqref{Carlm-delta-doub-4} we obtain
\eqref{eq:24.4-3L}.
\end{proof}

\subsection{Doubling and Three Sphere Inequalities}\label{doubling}

\begin{lem}\label{diffineqlemma}

Let $\mathbb{P}, \mathbb{P}^h \in C^{1,1}(\overline{B_1}, {\cal L} (\widehat{\mathbb{M}}^2, \widehat{\mathbb{M}}^2)), \mathbb{Q} \in C^{2,1}(\overline{B_1}, {\cal L} (\widehat{\mathbb{M}}^3,\widehat{\mathbb{M}}^3))$ be given by \eqref{eq:M3-1}, \eqref{eq:M3-2}, \eqref{eq:M4-4} and satisfying the strong convexity conditions  \eqref{eq:M6-1}, \eqref{eq:M6-2},  respectively. 

Let $u\in H^6(B_1)$ be a weak solution to \eqref{eq:M2-1}. Then, there exists a constant $M>0$
 depending on $M_2, \alpha_0, t, l$ only, such that 
 \begin{eqnarray}\label{eq:diffineq2}
|\Delta^3 u| \le M(|D\Delta^2 u| + \sum_{k=0}^{4} |D^k u|) \ \ \ \mbox{in} \ \ B_1 \ , 
\end{eqnarray}
where $M_2=\|\mathbb{P} \|_{C^{1,1}(\overline{B_1})} + \|\mathbb{P}^h \|_{C^{1,1}(\overline{B_1})} + \|\mathbb{Q} \|_{C^{2,1}(\overline{B_1})}$ .

\end{lem}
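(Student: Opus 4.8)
The plan is to derive \eqref{eq:diffineq2} directly from the equation \eqref{eq:M2-1} by expanding the sixth-order operator, isolating its principal part, and checking that the fifth-order remainder has the special structure ``bounded coefficient times $D\Delta^2 u$''. First, using \eqref{eq:M2-5} and, crucially, the explicit form \eqref{eq:M5-2} of $\overline{M}^h_{ijk}$, I would write \eqref{eq:M2-1} as
\[
M_{\alpha\beta,\alpha\beta}+\overline{M}^h_{\alpha\beta\gamma,\gamma\alpha\beta}=0,\qquad M_{\alpha\beta}=-(P_{\alpha\beta\gamma\delta}+P^h_{\alpha\beta\gamma\delta})u_{,\gamma\delta},
\]
\[
\overline{M}^h_{\alpha\beta\gamma}=a\bigl(\delta_{\alpha\beta}(\Delta u)_{,\gamma}+\delta_{\alpha\gamma}(\Delta u)_{,\beta}+\delta_{\beta\gamma}(\Delta u)_{,\alpha}\bigr)+b\,u_{,\alpha\beta\gamma},
\]
where $a=\tfrac13(b_0-3b_1)$ and $b=5b_1$ are of class $C^{2,1}(\overline{B_1})$ with $\|a\|_{C^{2,1}}+\|b\|_{C^{2,1}}\le CM_2$, and the coefficients of $M_{\alpha\beta}$ lie in $C^{1,1}$ with norm controlled by $M_2$. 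Since $u\in H^6(B_1)$, every derivative appearing in this expansion is an $L^2$ function and the identity holds a.e. in $B_1$.

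Next I would apply the Leibniz rule to $\overline{M}^h_{\alpha\beta\gamma,\gamma\alpha\beta}=\partial_\alpha\partial_\beta\partial_\gamma\bigl[\overline{M}^h_{\alpha\beta\gamma}\bigr]$ and sort the resulting terms by the number of derivatives falling on $u$. The terms in which all three outer derivatives hit $u$ produce, after contracting the Kronecker deltas and the repeated indices (equivalently, by a one-line symbol computation giving sixth-order symbol $(b_0+2b_1)|\xi|^6$), exactly $(b_0+2b_1)\Delta^3 u$. The fifth-order-in-$u$ terms are precisely those with one derivative on a coefficient; the key structural point is that, because of the index structure of \eqref{eq:M5-2}, each of them collapses to the form $c_k(x)\,\partial_k\Delta^2 u$ with $|c_k|\le CM_2$ --- in the $a$-part a Kronecker delta produces one Laplacian and the remaining contracted derivative closes a second one, while in the $b$-part the symmetric third-order object $u_{,\alpha\beta\gamma}$, contracted against two of $\partial_\alpha\partial_\beta\partial_\gamma$, collapses to $\partial_k(\Delta^2 u)$. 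All the other terms of $\overline{M}^h_{\alpha\beta\gamma,\gamma\alpha\beta}$ carry at least two derivatives on a coefficient and are hence bounded by $CM_2\sum_{k=0}^4|D^ku|$, and $M_{\alpha\beta,\alpha\beta}$ is a fourth-order term bounded by $CM_2\sum_{k=2}^4|D^ku|$.

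Finally, \eqref{eq:M5-1} together with the ellipticity $\mu\ge\alpha_0$ from \eqref{eq:M3-7} gives $b_0+2b_1=\tfrac{t^3}{12}\mu\,(2l_0^2+\tfrac{4}{5}l_1^2)\ge c_0>0$ with $c_0$ depending only on $\alpha_0$, $t$, $l$, so dividing the identity from the previous step by $b_0+2b_1$ and using the triangle inequality yields \eqref{eq:diffineq2} with $M=C/c_0$ depending only on $M_2$, $\alpha_0$, $t$, $l$. The one delicate point is the structural claim in the middle step, namely that \emph{every} fifth-order contribution really reduces to a bounded multiple of $\partial_k\Delta^2 u$ and not to a generic fifth derivative of $u$; this is exactly where the particular algebraic form \eqref{eq:M4-4}--\eqref{eq:M5-2} of $\mathbb{Q}$ enters essentially, and it is what places the problem in the ``intermediate case'' \eqref{eq-principal} rather than in the unsolvable regime $\Delta^3+L_5$.
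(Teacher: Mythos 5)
Your proposal is correct and follows essentially the same strategy as the paper's proof: expand $\partial_\alpha\partial_\beta\partial_\gamma\overline{M}^h_{\alpha\beta\gamma}$ starting from the structural formula \eqref{eq:M5-2}, identify the principal part as $(b_0+2b_1)\Delta^3 u$ (bounded below via \eqref{eq:M3-7} and \eqref{eq:M5-1}), and verify that the fifth-order remainder contracts to $(3b_{0,k}+6b_{1,k})\partial_k\Delta^2 u$, exactly as the paper finds. The only difference is organizational — you perform the Leibniz expansion on the contracted tensorial form and close the Laplacians via the Kronecker deltas, whereas the paper writes out the four independent components $M^h_{111},M^h_{222},M^h_{112},M^h_{221}$ and applies explicit third-order product rules — but the decomposition, the key cancellation, and the resulting constants are identical.
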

\begin{proof}
The proof follows by a differentiating argument and formulas \eqref{eq:M5-2}.
\end{proof}

\begin{lem}[Caccioppoli-type inequality]
    \label{lem:intermezzo-cube}
        Let $K$ be a positive number and let us assume that $u\in H^6(B_1)$ satisfies the inequality
\begin{equation}
    \label{Cacciop-A1-1}
    \left|\Delta^3 u\right|\leq K\sum_{k=0}^5\left|D^k
    u\right|.
\end{equation}
Then, for every $r$, $0<r<1$, we have
        \begin{equation}
    \label{eq:12a.2-cube}
    \|D^hu\|_{L^2(B_{\frac{r}{2}})}\leq \frac{C}{r^h}\|u\|_{L^2(B_r)}, \quad \forall
    h=1, ..., 6,
\end{equation}
where $C$ is a constant only depending on $K$.
\end{lem}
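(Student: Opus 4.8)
The plan is to reduce to the unit ball by scaling, then prove a Caccioppoli-type inequality for the top-order derivatives $D^6 u$ by a cutoff-plus-interpolation argument combined with the classical iteration lemma, and finally recover the intermediate derivatives $D^h u$, $1\le h\le 5$, by interpolation. For the reduction I would set $u_r(x)=u(rx)$ on $B_1$: since $r^{6-k}\le1$ for $0\le k\le5$, the hypothesis \eqref{Cacciop-A1-1} gives $|\Delta^3 u_r|\le K\sum_{k=0}^5 r^{6-k}|D^k u_r|\le K\sum_{k=0}^5|D^k u_r|$ in $B_1$, so $u_r\in H^6(B_1)$ satisfies the same inequality with the same $K$; tracking the scaling of the $L^2$ norms, \eqref{eq:12a.2-cube} for a general $r$ becomes equivalent to the case $r=1$, i.e. to proving $\|D^h u\|_{L^2(B_{1/2})}\le C\|u\|_{L^2(B_1)}$, $h=1,\dots,6$, with $C=C(K)$.

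The core is an interior estimate for the constant-coefficient operator $\Delta^3$. Fixing $\tfrac12\le\rho<\rho'\le1$ and a cutoff $\eta\in C_0^\infty(B_{\rho'})$ with $\eta\equiv1$ on $B_\rho$ and $|D^k\eta|\le C(\rho'-\rho)^{-k}$, I would apply the Plancherel bound $\|D^6 v\|_{L^2(\R^2)}\le C\|\Delta^3 v\|_{L^2(\R^2)}$ (valid for compactly supported $v\in H^6$, since $|\xi^\alpha|\le|\xi|^{6}$ for $|\alpha|=6$) to $v=\eta^6 u$; expanding $\Delta^3(\eta^6 u)$ by the Leibniz rule and bounding the commutator terms, this yields
$$\|D^6 u\|_{L^2(B_\rho)}\le C\Big(\|\Delta^3 u\|_{L^2(B_{\rho'})}+\sum_{j=0}^5(\rho'-\rho)^{j-6}\|D^j u\|_{L^2(B_{\rho'})}\Big).$$
Then I would use \eqref{Cacciop-A1-1} to replace $\|\Delta^3 u\|_{L^2(B_{\rho'})}$ by $K\sum_{j\le5}\|D^j u\|_{L^2(B_{\rho'})}$, and for each $j\le5$ apply the interpolation inequality $\|D^j u\|_{L^2(B_{\rho'})}\le\epsilon_j\|D^6 u\|_{L^2(B_{\rho'})}+C\epsilon_j^{-j/(6-j)}\|u\|_{L^2(B_{\rho'})}$ with the scale-adapted choice $\epsilon_j=\delta(\rho'-\rho)^{6-j}$, $\delta=\delta(K)$ small, so that the total coefficient of $\|D^6 u\|_{L^2(B_{\rho'})}$ equals $\tfrac12$ while the coefficient of $\|u\|_{L^2(B_{\rho'})}$ degenerates only like $(\rho'-\rho)^{-6}$. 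Setting $\Theta(\rho)=\|D^6 u\|_{L^2(B_\rho)}$ (finite because $u\in H^6(B_1)$), this produces $\Theta(\rho)\le\tfrac12\Theta(\rho')+C(K)(\rho'-\rho)^{-6}\|u\|_{L^2(B_1)}$ for all $\tfrac12\le\rho<\rho'\le1$; the standard iteration lemma (Giaquinta--Giusti) then gives $\|D^6 u\|_{L^2(B_{1/2})}\le C(K)\|u\|_{L^2(B_1)}$. The estimates for $h=1,\dots,5$ follow from interpolation on the fixed ball $B_{1/2}$ together with $\|u\|_{L^2(B_{1/2})}\le\|u\|_{L^2(B_1)}$, and undoing the scaling of the first step completes the proof.

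The main obstacle is that the right-hand side of \eqref{Cacciop-A1-1} contains $|D^5 u|$, a derivative of order only one below the order $6$ of the principal part $\Delta^3$, so a single application of interior regularity cannot close the estimate: the top-order term has to be absorbed through the factor $\tfrac12$ generated by the iteration. Making this work hinges on choosing the interpolation parameter proportional to the precise power $(\rho'-\rho)^{6-j}$ of the gap, so that after interpolation the coefficient of $\|D^6 u\|_{L^2(B_{\rho'})}$ is a harmless constant and the $\|u\|$-coefficient blows up at exactly the rate $(\rho'-\rho)^{-6}$ that the iteration lemma can absorb. The $H^6$ a priori assumption on $u$ is used only to guarantee that $\Theta$ is bounded on $[\tfrac12,1]$, which is what makes the iteration lemma applicable.
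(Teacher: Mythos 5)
Your proposal is correct, but it takes a genuinely different route from the paper's proof. The paper obtains everything from a single black box: H\"{o}rmander's interior estimate (Theorem 17.1.3 of \emph{The Analysis of Linear Partial Differential Operators III}) applied to the operator $\Delta^3$, i.e.\ the multiplicative, distance-weighted interpolation inequality
$\|d^k D^k u\|_{L^2(B_r)}\le C\bigl(\|d^6\Delta^3 u\|_{L^2(B_r)}+\|u\|_{L^2(B_r)}\bigr)^{k/6}\|u\|_{L^2(B_r)}^{1-k/6}$ with $d(x)=r-|x|$; Young's inequality turns this into an additive bound, and the lower-order terms coming from \eqref{Cacciop-A1-1} are absorbed inside the \emph{fixed} ball $B_r$ by using $d^{12}\le d^{2k}$ and taking the Young parameter of size $K^{-1}$. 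The boundary-degenerate weight $d(x)^k$ thus does, in one stroke, the work that in your argument is shared among the cutoff, the gap-adapted interpolation parameters $\epsilon_j=\delta(\rho'-\rho)^{6-j}$, and the Giaquinta--Giusti iteration. Your decomposition --- scaling to $r=1$, the Plancherel bound $\|D^6v\|_{L^2(\R^2)}\le\|\Delta^3v\|_{L^2(\R^2)}$ for compactly supported $v\in H^6$, Ehrling-type interpolation, hole-filling iteration --- is more elementary and self-contained, at the price of length; both arguments confront the same essential difficulty you identify, namely that \eqref{Cacciop-A1-1} contains $|D^5u|$, only one order below the principal part, so the top contribution cannot be absorbed in a single pass and must be handled either by the vanishing weight (the paper) or by the factor $\tfrac12$ generated by the iteration (your proof). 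Two small points to make explicit in a write-up: the interpolation constants on $B_{\rho'}$ must be uniform for $\rho'\in[\tfrac12,1]$ (true, since these balls are uniformly regular), and the boundedness of $\Theta(\rho)=\|D^6u\|_{L^2(B_\rho)}$ on $[\tfrac12,1]$, which you correctly single out as the only use of the $H^6$ hypothesis, is precisely what legitimates the iteration lemma.
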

\begin{proof}
We apply \cite[Th. 17.1.3]{cube-Ho} to the sixth order elliptic
operator $\Delta^3$ obtaining that, for any $r\in (0,1)$ and
$k=0,1,\dots, 6$, we have

\begin{equation}
\begin{aligned}
\label{2-A1-cube}
 \left\Vert d^k(x)D^{k}
u\right\Vert_{L^2(B_r)} \leq C\left(\left\Vert d^{6}(x)\Delta^3
u\right\Vert_{L^2(B_r)}+\left\Vert
u\right\Vert_{L^2(B_r)}\right)^{\frac{k}{6}}\left\Vert
u\right\Vert_{L^2(B_r)}^{1-\frac{k}{6}},
\end{aligned}
\end{equation}
where
$$d(x)=\mbox{dist}\left(x,\partial B_r\right)=r-|x|,
\quad x\in B_r$$ and $C>0$ is an absolute constant.

By applying Young inequality
$$a^{\beta}b^{1-\beta}\leq \beta \varepsilon
a+(1-\beta)\varepsilon^{-\frac{\beta}{1-\beta}}b,$$ for every
$a,b\geq 0$, $\beta\in[0,1)$, $\varepsilon>0$, and by using
\eqref{Cacciop-A1-1} and \eqref{2-A1-cube} we get

\begin{equation}
 \begin{aligned}\label{3-A1-cube}
\sum_{k=0}^5\int_{B_r}d^{2k}(x)\left\vert D^{k} u\right\vert^2
dx&\leq C\varepsilon^2\int_{B_r}d^{12}(x)|\Delta^3
u|^2dx+C_{\varepsilon}\left\Vert
u\right\Vert_{L^2(B_r)}^2\leq\\&\leq CK^2\varepsilon^2
\sum_{k=0}^5\int_{B_r}d^{12}(x)\left\vert D^{k} u\right\vert^2
dx+C_{\varepsilon}\left\Vert u\right\Vert_{L^2(B_r)}^2,
\end{aligned}
\end{equation}
where $C>0$ is an absolute constant and $C_{\varepsilon}>0$ depends on
$\varepsilon$ only. Hence we have

\begin{equation}\label{star-A2-cube}
\sum_{k=0}^5\int_{B_r}\left(1-CK^2\varepsilon^2d^{12-2k}(x)\right)d^{2k}(x)\left\vert
D^{k} u\right\vert^2\leq C_{\varepsilon}\left\Vert
u\right\Vert_{L^2(B_r)}^2.
\end{equation}
Now, if $\varepsilon=\left(\frac{1}{2CK^2}
\right)^{1/2}$ then
$$1-CK^2\varepsilon^2d^{12-2k}(x)\geq 1-CK^2\varepsilon^2\geq \frac{1}{2}, \quad
k=0,1,\dots , 5.$$ Hence we have

\begin{equation}\label{1-A3-cube}
\sum_{k=0}^5\left(\frac{r}{2}\right)^{2k}\int_{B_{r/2}}\left\vert
D^{k} u\right\vert^2 dx\leq\sum_{k=0}^5\int_{B_r}d^{2k}(x)\left\vert
D^{k} u\right\vert^2 dx\leq C\left\Vert u\right\Vert_{L^2(B_r)}^2.
\end{equation}
Furthermore, by \eqref{2-A1-cube} for $k=6$, \eqref{1-A3-cube} and
\eqref{Cacciop-A1-1} we get

\begin{eqnarray}\label{2-A3-cube}
 %\begin{aligned}
&&\int_{B_r}d^{12}(x)\left\vert D^{6} u\right\vert^2 dx\leq
C\int_{B_r}d^{12}(x)|\Delta^3 u|^2dx+C\left\Vert
u\right\Vert_{L^2(B_r)}^2\leq \nonumber\\
&&C
\sum_{k=0}^5\int_{B_r}d^{12}(x)\left\vert D^{k} u\right\vert^2
dx+C\left\Vert u\right\Vert_{L^2(B_r)}^2\leq C\left\Vert
u\right\Vert_{L^2(B_r)}^2 , 
%\end{aligned}
\end{eqnarray}
where $C>0$ depends on $K$ only.

Now, by \eqref{1-A3-cube} and \eqref{2-A3-cube} we have

\begin{equation}\label{2n-A3-cube}
\sum_{k=0}^6\left(\frac{r}{2}\right)^{2k}\int_{B_{r/2}}\left\vert
D^{k} u\right\vert^2 dx\leq\sum_{k=0}^6\int_{B_r}d^{2k}(x)\left\vert
D^{k} u\right\vert^2 dx\leq C\left\Vert u\right\Vert_{L^2(B_r)}^2
\end{equation}
and \eqref{eq:12a.2-cube} follows.
\end{proof}

\begin{theo} [{\bf Doubling inequality}]
    \label{theo:40.teo}
    Let $M$ be a positive number and $R_1$ the number introduced in
    Proposition \ref{prop:Carleman 3Laplace}. Assume that $U\in H^6\left(B_{1}\right)$ satisfy
\begin{equation}
    \label{2-1-appdoub}
    \left|\Delta^3 U\right|\leq M\left(\left|D\Delta^2 U\right|+\sum_{k=0}^4\left|D^k
    U\right|\right).
\end{equation}
There exists $C>1$, only depending on $M$, such that,
for every $r<\frac{R_1}{2^8}$ we have
\begin{equation}\label{eq:10.6.1102-cube}
    \int_{B_{2r}}U^2\leq C N^{{\overline{k}}}\int_{B_{r}}U^2
\end{equation}
where
\begin{equation}
    \label{eq:10.6.1108-cube}
    N=\frac{\int_{B_{R_1}}U^2}{\int_{B_{R_1/2^7}}U^2}
    \end{equation}
(with ${\overline{k}}=8$).
\end{theo}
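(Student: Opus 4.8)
The plan is to transplant the Carleman estimate \eqref{eq:24.4-3L} for $\Delta^3$ to the function $U$ via a cut-off, absorb the lower order terms produced by \eqref{2-1-appdoub}, control the remaining commutator terms by the Caccioppoli inequality of Lemma \ref{lem:intermezzo-cube}, and finally optimise in the large parameter $\tau$. Fix once and for all $\epsilon=\tfrac15$ in Proposition \ref{prop:Carleman 3Laplace}, so that $R_1$, $\overline\tau$ and the constants there become absolute. First note that since $|D\Delta^2 U|\le|D^5U|$, \eqref{2-1-appdoub} yields $|\Delta^3U|\le M\sum_{k=0}^5|D^kU|$, so Lemma \ref{lem:intermezzo-cube} applies: for all $0<s<1$ and $h=1,\dots,6$,
\begin{equation}\label{eq:plan-cacc}
\|D^hU\|_{L^2(B_{s/2})}\le C s^{-h}\|U\|_{L^2(B_s)},
\end{equation}
with $C=C(M)$. (If $U\equiv0$ there is nothing to prove, so assume $\|U\|_{L^2(B_{R_1/2^7})}>0$; then $N\ge1$.)

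Fix $r<R_1/2^8$ and choose $\eta\in C_0^\infty(\R^2)$ with $\eta\equiv0$ on $B_{r/4}\cup(\R^2\setminus B_{R_1/2})$, $\eta\equiv1$ on $B_{R_1/4}\setminus B_{r/2}$, $|D^j\eta|\le Cr^{-j}$ on $B_{r/2}\setminus B_{r/4}$ and $|D^j\eta|\le CR_1^{-j}$ on $B_{R_1/2}\setminus B_{R_1/4}$, $j=0,\dots,6$. Then $\eta U\in H^6$ has compact support in $B_{R_1}\setminus\overline B_{r/4}$, and by density \eqref{eq:24.4-3L} applies to $u=\eta U$ (with its inner radius taken comparable to $r$). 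Writing $\Delta^3(\eta U)=\eta\,\Delta^3U+\mathcal C$, where $\mathcal C$ is supported where $\nabla\eta\neq0$ and $|\mathcal C|\le C\sum_{j=1}^6|D^j\eta|\,|D^{6-j}U|$, one estimates $|\eta\,\Delta^3U|$ by \eqref{2-1-appdoub}. Since $\rho\le1$ and $\epsilon\le\tfrac15$ one has $\rho^{4-2\tau}\le\rho^{2+\epsilon-2\tau}$ and $\rho^{4-2\tau}\le\rho^{2k+5\epsilon-8-2\tau}$ for $k=0,\dots,4$, hence the contributions of $|D\Delta^2U|$ and of $|D^kU|$, $k\le4$, coming from $\eta\,\Delta^3U$ are absorbed into the right-hand side of \eqref{eq:24.4-3L} once $\tau\ge\tau^*$, with $\tau^*=\tau^*(M)$. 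The commutator contributions of $\mathcal C$ live on the shells $B_{r/2}\setminus B_{r/4}$ and $B_{R_1/2}\setminus B_{R_1/4}$, where Remark \ref{rem:stima_rho} bounds $\rho$ from above and below by fixed multiples of $|x|$ (uniformly in $r$, because $r<R_1/2^8$) and \eqref{eq:plan-cacc}, applied on $B_{r/2}$ and on $B_{R_1/2}$, bounds the derivatives of $U$. Computing the powers (the cancellation $r^{4-2\tau}\cdot r^{-2(6-k)}\cdot r^{-2k}=r^{-8-2\tau}$ makes the inner shell contribute a scale-uniform exponent) one arrives at
\begin{equation}\label{eq:plan-core}
\tau^6r^3\!\!\int_{B_{R_1/4}\setminus B_{r/2}}\!\!\rho^{-11-2\tau}U^2\ \le\ C\Big(A^{\tau}r^{-8-2\tau}\!\!\int_{B_r}U^2\ +\ B^{\tau}\!\!\int_{B_{R_1}}U^2\Big),\qquad\tau\ge\tau^*,
\end{equation}
for constants $A,B>1$ and $C>0$ depending on $M$ only.

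To exploit \eqref{eq:plan-core}, bound the left-hand side from below by restricting the integral to the shell $B_{2r}\setminus B_r$ (which lies in $B_{R_1/4}\setminus B_{r/2}$ since $r<R_1/8$) and using $\rho\le|x|\le2r$ there; after dividing, this gives, for all $\tau\ge\tau^*$,
\begin{equation}\label{eq:plan-dag}
\int_{B_{2r}\setminus B_r}U^2\ \le\ C_1\alpha^{\tau}\int_{B_r}U^2\ +\ C_1\beta^{\tau}r^{8+2\tau}\int_{B_{R_1}}U^2,
\end{equation}
with $\alpha=4A$, $\beta=4B$. Using $r<R_1/2^8$ one checks $\beta r^2<1$, so the second term decreases in $\tau$; choosing $\tau=\max\{\tau^*,\tau_1\}$ with $\tau_1=\big[\ln\!\big(r^8\!\int_{B_{R_1}}U^2/\!\int_{B_r}U^2\big)\big]/\big[\ln(\alpha/\beta r^2)\big]$ makes the second term no larger than the first. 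If $\tau=\tau^*$, \eqref{eq:plan-dag} gives $\int_{B_{2r}}U^2\le C\int_{B_r}U^2\le CN^8\int_{B_r}U^2$. If $\tau=\tau_1$, then $\alpha^{\tau_1}=\big(r^8\!\int_{B_{R_1}}U^2/\!\int_{B_r}U^2\big)^{\mu}\le\big(\!\int_{B_{R_1}}U^2/\!\int_{B_r}U^2\big)^{\mu}$ with $\mu=\ln\alpha/\ln(\alpha/\beta r^2)\in(0,1)$, which yields the three–spheres inequality $\int_{B_{2r}}U^2\le C\big(\!\int_{B_{R_1}}U^2\big)^{\mu}\big(\!\int_{B_r}U^2\big)^{1-\mu}$. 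Running the same construction with the outer cut-off placed at a scale proportional to $r$ gives a three–spheres inequality with \emph{fixed} radius ratios and exponent $\theta$ in a fixed subinterval of $(0,1)$; iterating it along the dyadic scales $r,2r,4r,\dots$ up to $R_1$ — the standard argument showing that the doubling index $\log\big(\!\int_{B_{2\rho}}U^2/\!\int_{B_\rho}U^2\big)$ is, up to an additive constant, non-increasing in $\rho$ — controls the doubling index at scale $r$ by its value on the scales between $R_1/2^7$ and $R_1$, i.e. by $8\log N$ up to $C(M)$. This gives \eqref{eq:10.6.1102-cube} with $\overline k=8$; the exponent $8$ comes precisely from the $2^7$ appearing in \eqref{eq:10.6.1108-cube}.

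The delicate points are, first, the bookkeeping leading to \eqref{eq:plan-core}: one must check that the weights of the various terms on the right of \eqref{eq:24.4-3L} dominate (after the comparison $\rho\le1$) the corresponding terms produced by \eqref{2-1-appdoub}, so that they are genuinely absorbed for $\tau\ge\tau^*$, and that the commutator terms collapse to the scale-uniform power $r^{-8-2\tau}$; and, second, the passage from the three–spheres inequality to the doubling inequality, where the dyadic iteration must be carried through $O(\log(R_1/r))$ steps with the constant kept under control and the precise exponent $\overline k=8$ identified.
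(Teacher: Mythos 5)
There is a genuine gap in the final step. Your Carleman set-up (cut-off, absorption of the terms produced by \eqref{2-1-appdoub}, control of the commutator on the two shells via Lemma \ref{lem:intermezzo-cube}) matches the paper's Lemma \ref{new1-cube}, but when you pass to your core inequality you retain on the left-hand side only the singular term $\tau^6r^3\int\rho^{-11-2\tau}U^2$ and discard the term $\sum_{k=0}^4\tau^{9-2k}\int\rho^{2k+1-2\tau}|D^k(\xi U)|^2$ coming from \eqref{eq:24.4-3L} (after the shift $\tau\mapsto\tau-4$). The $k=0$ piece of that sum, restricted to the region where the cut-off equals $1$, gives $\tau^{9}\int_{B_{R}\setminus B_{r/2}}\rho^{1-2\tau}U^2\geq R^{1-2\tau}\int_{B_{R}\setminus B_{r/2}}U^2$ at the \emph{fixed} intermediate scale $R=R_1/2^7$, and this is precisely the lever that lets one absorb the outer term $\left(\tfrac{R_1}{2^6}\right)^{-2\tau}\int_{B_{R_1}}U^2$ by choosing $\tau_0=\widetilde\tau+\log_4\!\big(2^7C\overline M^2N/R_1\big)$, i.e.\ a value of $\tau$ depending only on $N$ and $M$. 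Once that term is absorbed, the surviving inequality $\frac{R_1}{2^7}(2r)^{-2\tau_0}\int_{B_{2r}}U^2\leq C\overline M^2\left(\tfrac{r}{2^7}\right)^{-2\tau_0}\int_{B_r}U^2$ yields the doubling inequality in one stroke, with $\overline k=8$ arising from $(2^8)^{2\tau_0}=4^{8\tau_0}$ and $4^{\tau_0}\sim N$.

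Without that intermediate term your only option is the one you take: optimize $\tau$ to balance the inner term $\alpha^\tau\int_{B_r}U^2$ against the outer term $\beta^\tau r^{8+2\tau}\int_{B_{R_1}}U^2$. That balancing produces a three-spheres inequality whose exponent depends on $r$, and the value $\alpha^{\tau_1}$ at the balancing point is controlled by the (unknown) rate of decay of $\int_{B_r}U^2$, not by $N$ — which is exactly the quantity the doubling inequality is supposed to control, so the argument is circular at this point. Your proposed repair, iterating a fixed-ratio three-spheres inequality over the $O(\log(R_1/r))$ dyadic scales between $r$ and $R_1$, does not close the gap: each step of such an iteration costs a multiplicative constant, so the accumulated factor is $C^{\log(R_1/r)}=(R_1/r)^{\log C}$, and the resulting "doubling constant" blows up as $r\to0$. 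The "almost-monotonicity of the doubling index" you invoke is not a consequence of the three-spheres inequality alone; in this framework it is exactly the content of the two-term Carleman inequality \eqref{Lemma-doub-cube} that you have truncated. The fix is to keep the full left-hand side of \eqref{eq:24.4-3L} and run the absorption argument of the paper's proof of Theorem \ref{theo:40.teo}.
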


\begin{lem}\label{new1-cube}
Let $U\in H^6\left(B_{1}\right)$ satisfy \eqref{2-1-appdoub}. Then there exists an absolute constant
$R_1\in(0,1]$ such that
for every $R$ and for every $r$ such that $0<2r<R<\frac{R_1}{2}$, we have
\begin{gather}
    \label{Lemma-doub-cube}
    R(2r)^{-2\tau}\int_{B_{2r}}U^2+R^{1-2\tau}\int_{B_{R} }
    U^2
    \leq C \overline{M}^2\left[\left(\frac{r}{2^7}\right)^{-2\tau}\int_{B_r}U^2+
    \left(\frac{R_1}{2^6}\right)^{-2\tau}\int_{B_{R_1}}U^2
    \right],
\end{gather}
for every $\tau\geq \widetilde{\tau}\geq 1$, with $\widetilde{\tau}$
depending on $M$ only and $C$ a positive absolute constant.
\end{lem}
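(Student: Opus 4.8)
The plan is to apply the Carleman estimate for $\Delta^3$ of Proposition \ref{prop:Carleman 3Laplace}, in its singular (``doubling'') form \eqref{eq:24.4-3L}, to $u=\eta U$ for a cutoff $\eta$ adapted to the three scales $r$, $R$, $R_1$, then to eliminate the term $\eta\,\Delta^3U$ by means of the differential inequality \eqref{2-1-appdoub}, and finally to control the commutator $[\Delta^3,\eta]U$ by the Caccioppoli-type inequality of Lemma \ref{lem:intermezzo-cube}. I fix $\epsilon=\tfrac15$ and let $R_1\in(0,1]$ be the radius of Proposition \ref{prop:Carleman 3Laplace} (shrunk, if needed, to $R_1\le\tfrac12$; it still depends only on $\epsilon$). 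Given $0<2r<R<R_1/2$, I take $\eta\in C^\infty_0(\R^2)$ with $0\le\eta\le1$, $\eta\equiv0$ on $\overline B_{r/4}$ and on $\R^2\setminus B_{R_1/2}$, $\eta\equiv1$ on $B_R\setminus B_{r/2}$, and with $|D^k\eta|\le C\,r^{-k}$ on the inner transition annulus $\Gamma_{\mathrm{in}}\subset\{r/4\le|x|\le r/2\}$, $|D^k\eta|\le C$ on the outer one $\Gamma_{\mathrm{out}}$ (located near $\partial B_{R_1/2}$), for $k=1,\dots,6$. Since $U\in H^6(B_1)$ we have $\eta U\in H^6_0(B_{R_1}\setminus\overline B_{r/4})$, so by density of $C_0^\infty$ in this class (both sides of \eqref{eq:24.4-3L} being continuous in the $H^6$ topology over the fixed support) the estimate \eqref{eq:24.4-3L} applies to $u=\eta U$; moreover, by Remark \ref{rem:stima_rho} with $\epsilon=\tfrac15$ one has $\rho\ge|x|\,2^{-1/\epsilon}\ge r/2^7$ on $\Gamma_{\mathrm{in}}$ and $\rho\ge R_1/2^6$ on $\Gamma_{\mathrm{out}}$.

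Writing $\Delta^3(\eta U)=\eta\,\Delta^3U+\mathcal C$, where $\mathcal C=[\Delta^3,\eta]U$ is a linear combination of the products $D^j\eta\cdot D^{6-j}U$, $1\le j\le6$, hence supported in $\Gamma:=\Gamma_{\mathrm{in}}\cup\Gamma_{\mathrm{out}}$, I insert this into the left-hand side of \eqref{eq:24.4-3L}. The term with $\eta\,\Delta^3U$ is estimated by \eqref{2-1-appdoub}: after re-expressing $\eta^2|D\Delta^2U|^2$ and $\eta^2|D^kU|^2$ through $|D\Delta^2(\eta U)|^2$, resp. $|D^k(\eta U)|^2$, modulo terms supported in $\Gamma$, and using $\rho\le1$ together with $\epsilon$ small to pass from the power $\rho^{4-2\tau}$ to the weights $\rho^{2+\epsilon-2\tau}$, $\rho^{2k+5\epsilon-8-2\tau}$ ($k\le 4$) occurring on the right-hand side of \eqref{eq:24.4-3L}, all these terms are absorbed into one half of that right-hand side; this is exactly what forces $\widetilde\tau$ to depend on $M$. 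For the commutator, \eqref{2-1-appdoub} yields $|\Delta^3U|\le M\sum_{k=0}^5|D^kU|$, so Lemma \ref{lem:intermezzo-cube} gives $\sum_{k=0}^5\|D^kU\|^2_{L^2(B_{\varrho/2})}\le C\varrho^{-10}\|U\|^2_{L^2(B_\varrho)}$ for $0<\varrho<1$; applying it with $\varrho\simeq r$ for $\Gamma_{\mathrm{in}}$ and with $\varrho\simeq R_1$ for $\Gamma_{\mathrm{out}}$, and collecting the powers of $\rho$, $r$, $R_1$, one obtains
\[
\int_{\Gamma}\rho^{4-2\tau}|\mathcal C|^2\;\le\;C\Bigl(r^{-8}\bigl(\tfrac{r}{2^7}\bigr)^{-2\tau}\!\int_{B_r}U^2\;+\;\bigl(\tfrac{R_1}{2^6}\bigr)^{-2\tau}\!\int_{B_{R_1}}U^2\Bigr),
\]
with $C$ absolute (for the outer term one uses that $R_1$ is a fixed absolute constant, so the analogous factor $R_1^{-8}$ is harmless).

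It remains to read off the left-hand side of \eqref{Lemma-doub-cube} from the surviving good terms of \eqref{eq:24.4-3L}. Keeping $\tau^6r^3\!\int\rho^{-11-2\tau}(\eta U)^2$ and $\tau^9\!\int\rho^{5\epsilon-8-2\tau}(\eta U)^2$ and bounding $\rho\le|x|$ from above on $B_{2r}$, resp. $B_R$ (where $\eta\equiv1$ outside $B_{r/2}$), these terms are $\ge\tau^6 2^{-11}r^{-8}(2r)^{-2\tau}\!\int_{B_{2r}\setminus B_{r/2}}U^2$ and $\ge\tau^9R^{5\epsilon-8}R^{-2\tau}\!\int_{B_R\setminus B_{r/2}}U^2$. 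Combining with the two displays above and exploiting that the surplus factor $r^{-8}$ carried by the singular coefficient $\tau^6r^3$ exactly matches the $r^{-8}$ loss of Lemma \ref{lem:intermezzo-cube}, while $R<1$, $0<r<1$, $\tau^6,\tau^9\ge1$ and $\tau^9\ge R^{9-5\epsilon}$ dispose of the remaining numerical factors, one arrives at the estimate with $B_{2r}\setminus B_{r/2}$, $B_R\setminus B_{r/2}$ in place of $B_{2r}$, $B_R$; adding back $\int_{B_{r/2}}U^2\le\int_{B_r}U^2$, which is harmless because $R(2r)^{-2\tau}\le(r/2^7)^{-2\tau}$ and $R^{1-2\tau}\le(r/2^7)^{-2\tau}$, and absorbing $1+M^2$ into $C\overline M^2$, proves \eqref{Lemma-doub-cube}. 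The main obstacle is precisely this bookkeeping of the many negative powers of $\rho$, $r$, $R$, $R_1$, $\tau$: one has to verify that each error term produced by the cutoff and by \eqref{2-1-appdoub} is either absorbed into the good part of the Carleman estimate (this pins $\widetilde\tau$ to $M$) or dominated by the claimed right-hand side, with the positions $r/4,r/2$ of the inner annulus and the choice $\epsilon=\tfrac15$ tuned so that the constants $2^7$, $2^6$ come out exactly; the density passage from $C^\infty_0$ to $\eta U$ is routine.
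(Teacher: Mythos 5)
Your overall strategy is the same as the paper's: cut off $U$ between the scales $r$ and $R_1$, apply the singular Carleman estimate for $\Delta^3$ to $\eta U$, absorb the $\eta\,\Delta^3U$ contribution into the good terms via \eqref{2-1-appdoub} (which is indeed what pins $\widetilde\tau$ to $M$), and control the commutator on the two transition annuli with the Caccioppoli-type Lemma \ref{lem:intermezzo-cube}. However, there is a genuine gap in your final bookkeeping, and it comes from applying \eqref{eq:24.4-3L} with its original weights instead of first shifting $\tau\mapsto\tau-4$ (equivalently, multiplying every weight by $\rho^{8}$), as the paper does in \eqref{Lemma-doub:1-1}. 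With the unshifted weight $\rho^{4-2\tau}$ in front of $|\Delta^3(\eta U)|^2$, your inner commutator term is, as you compute, $E:=r^{-8}(r/2^{7})^{-2\tau}\int_{B_r}U^2$, while the two good terms you retain on the left are $A:=\tau^{6}2^{-11}r^{-8}(2r)^{-2\tau}\int_{B_{2r}\setminus B_{r/2}}U^2$ and $B:=\tau^{9}R^{-7}R^{-2\tau}\int_{B_R\setminus B_{r/2}}U^2$. The surplus $r^{-8}$ cancels only in the pair $(A,E)$: multiplying $A\le C(E+F)$ by $r^{8}$ does give the first half of \eqref{Lemma-doub-cube}. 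But $B$ carries no $r^{-8}$, so from $B\le C(E+F)$ you can only deduce
\begin{equation*}
R^{1-2\tau}\int_{B_R\setminus B_{r/2}}U^2\;\le\;C\,\tau^{-9}R^{8}\Bigl(r^{-8}\bigl(\tfrac{r}{2^{7}}\bigr)^{-2\tau}\int_{B_r}U^2+F\Bigr),
\end{equation*}
and the factor $\tau^{-9}(R/r)^{8}$ is unbounded as $r/R\to0$ (the inequality must hold for every fixed $\tau\ge\widetilde\tau$ and every $0<2r<R$), so the stated right-hand side $(r/2^{7})^{-2\tau}\int_{B_r}U^2$ is not recovered. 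This is not a removable constant: an extra $r^{-8}$ on the right of \eqref{Lemma-doub-cube} would propagate into the doubling constant of Theorem \ref{theo:40.teo} and destroy its $r$-independence.

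The remedy is exactly the paper's preliminary change $\tau\to\tau-4$ in \eqref{eq:24.4-3L}, which (with $\epsilon=\tfrac15$) turns the weights into $\rho^{12-2\tau}$ for $|\Delta^3 u|^2$, $\rho^{-3-2\tau}$ for the singular term and $\rho^{2k+1-2\tau}$ for $|D^ku|^2$. Then on the inner annulus the factor $\rho^{12}\le (r/2)^{12}$ exactly compensates the loss of Lemma \ref{lem:intermezzo-cube}, so $J_0\le C(r/2^{7})^{-2\tau}\int_{B_r}U^2$ with no surplus power of $r$, while the good terms become $\tau^{6}r^{3}\rho^{-3-2\tau}\ge\tau^{6}2^{-3}(2r)^{-2\tau}$ on $B_{2r}\setminus B_{r/2}$ and $\tau^{9}\rho^{1-2\tau}\ge R^{1-2\tau}$ on $B_R\setminus B_{r/2}$, and both halves of \eqref{Lemma-doub-cube} follow simultaneously. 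The rest of your argument (cutoff, density, absorption for $\tau\ge C\overline M^2$, outer annulus) matches the paper's proof, up to the exact placement of the outer transition (the paper puts it in $B_{2R_1/3}\setminus B_{R_1/2}$, so that $\rho\ge R_1/2^{6}$ there; locating it inside $B_{R_1/2}$ as you do would slightly alter the constant $2^6$).
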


\begin{proof}
Let $r,R$ satisfy
\begin{equation}
    \label{eq:25.1-cube}
0<2r<R<\frac{R_1}{2}.
\end{equation}
Let $\eta\in C^\infty_0((0,R_1))$ such that
\begin{equation}
    \label{eq:25.2-cube}
    0\leq \eta\leq 1,
\end{equation}
\begin{equation}
    \label{eq:25.4-cube}
\eta=0 \quad \hbox{ in }\left(0,\frac{r}{4}\right)\cup
\left(\frac{2}{3}R_1,1\right), \quad \eta=1 \quad \hbox{ in
}\left[\frac{r}{2}, \frac{R_1}{2}\right],
\end{equation}
\begin{equation}
    \label{eq:25.6-cube}
\left|\frac{d^k\eta}{dt^k}(t)\right|\leq C r^{-k} \quad \hbox{ in
}\left(\frac{r}{4}, \frac{r}{2}\right),\quad\hbox{ for } 0\leq k\leq
6,
\end{equation}
\begin{equation}
    \label{eq:25.7-cube}
\left|\frac{d^k\eta}{dt^k}(t)\right|\leq C R_1^{-k} \quad \hbox{ in
}\left(\frac{R_1}{2}, \frac{2R_1}{3}\right),\quad\hbox{ for } 0\leq
k\leq 6.
\end{equation}
Let us define
\begin{equation}
    \label{eq:25.5-cube}
\xi(x)=\eta(|x|).
\end{equation}
Let us fix $\epsilon=\frac{1}{5}$  and let us shift $\tau$ in
$\tau-4$ in estimate \eqref{eq:24.4-3L}, by adjusting the exponent
of such an estimate, we have

\begin{equation}
 \begin{aligned}
    \label{Lemma-doub:1-1}
    \tau^6r^3\int_{B_{R_1}}\rho^{-3-2\tau}u^2dx
+\tau\int_{B_{R_1}}\rho^{11-2\tau}\left|D\Delta^2u\right|^2+&\sum_{k=0}^4\tau^{9-2k}\int_{B_{R_1}}
\rho^{2k+1-2\tau}|D^ku|^2\leq\\& \leq
C\int_{B_{R_1}}\rho^{12-2\tau}|\Delta^3 u|^2,
\end{aligned}
\end{equation}

\noindent for every $\tau\geq \overline{\tau}$, for every $r\in (0,
R_1)$ and for every $u\in C^\infty_0(B_{R_1}\setminus\
\overline{B}_{r/4})$, where $R_1$ has been introduced in Proposition \ref{prop:Carleman 3Laplace} and is an absolute constant since here we have chosen $\epsilon=\frac{1}{5}$.

\medskip

By a density argument, we may apply the Carleman estimate
\eqref{Lemma-doub:1-1} to $u=\xi U$, obtaining
\begin{eqnarray}
    \label{eq:26.1-cube}
   && \tau^6r^3\int_{B_{R_1}}\rho^{-3-2\tau}\xi^2U^2dx+\tau\int_{B_{R_1}}\rho^{11-2\tau}\left|D\Delta^2(\xi U)\right|^2
    +\sum_{k=0}^4\tau^{9-2k}\int_{B_{R_1}}
\rho^{2k+1-2\tau}|D^k(\xi U)|^2 \leq\quad \quad \nonumber\\
&&\quad \quad \leq C\int_{B_{R_1}}\rho^{12-2\tau}|\Delta^3 (\xi U)|^2
\end{eqnarray} for
$\tau\geq \overline{\tau}$ and $C$ an absolute constant. Since we
have

\begin{equation}\label{Lemma-doub:2-2}
\left|\Delta^3 (\xi U)\right|^2\leq 2\xi^2\left|\Delta^3
U\right|^2+C\sum_{k=0}^5 \left|D^k U\right|^2 \left|D^{6-k}
\xi\right|^2,
\end{equation}
denoting

\begin{equation}
    \label{eq:27.1-cube}
    J_0 =\int_{B_{r/2}\setminus B_{r/4}}\rho^{12-2\tau}
    \sum_{k=0}^5 \left(r^{k-6}|D^k U|\right)^2,
\end{equation}
\begin{equation}
    \label{eq:27.2-cube}
    J_1 =\int_{B_{2R_1/3}\setminus B_{R_1/2}}\rho^{12-2\tau}
    \sum_{k=0}^5 (R_1^{k-6}|D^k U|)^2,
\end{equation}
we have
\begin{gather}\label{eq:27.3-cube}
\tau^6r^3\int_{B_{R_1}}\rho^{-3-2\tau}\xi^2U^2dx+\tau\int_{B_{R_1}}\rho^{11-2\tau}\left|D\Delta^2(\xi
U)\right|^2+
    +\sum_{k=0}^4\tau^{9-2k}\int_{B_{R_1}}
\rho^{2k+1-2\tau}|D^k(\xi U)|^2 \leq\\\nonumber
\leq C\int_{B_{R_1}}\rho^{12-2\tau}|\Delta^3 U|^2+CJ_0+CJ_1,
\end{gather}
for $\tau\geq \overline{\tau}$, with $C$ an absolute constant.

Now, by using \eqref{eq:25.1-cube}--\eqref{eq:25.5-cube},
\eqref{Lemma-doub:2-2}, performing a trivial estimate {}from below of
the left hand side of \eqref{eq:27.3-cube} and a trivial estimate
{}from above of the right hand side of \eqref{eq:27.3-cube}, we get

\begin{eqnarray}
    \label{eq:33.1-cube}
   && \tau^6r^3\int_{B_{R_1}}\rho^{-3-2\tau}|\xi U|^2+\tau\int_{B_{R_1/2} \setminus
B_{r/2}}\rho^{11-2\tau}\left|D\Delta^2 U\right|^2
    +\sum_{k=0}^4 \tau^{9-2k}\int_{B_{R_1/2} \setminus
B_{r/2}}
    \rho^{2k+1-2\tau}|D^k U|^2
    \leq \nonumber\\
   && \leq
    CM^2\int_{B_{R_1/2} \setminus
B_{r/2}}\rho^{12-2\tau}|D\Delta^2U|^2
    +CM^2\int_{B_{R_1/2} \setminus
B_{r/2}} \sum_{k=0}^4 \tau^{8-2k}\int_{B_{R_1/2} \setminus B_{r/2}}
    \rho^3\cdot\rho^{2k+1-2\tau}|D^k U|^2+ \nonumber\\ 
   &&\quad \quad \quad +C\overline{M}^2(J_0+J_1),
\end{eqnarray}
for $\tau\geq \overline{\tau}$, with $C$ an absolute constant and
$\overline{M}=\sqrt{M^2+1}$.

Let us move on the left of \eqref{eq:33.1-cube} the first and the
second term on the right of \eqref{eq:33.1-cube} and we obtain

\begin{gather}
    \label{Lemma-doub:6}
    \tau^6r^3\int_{B_{R_1}}\rho^{-3-2\tau}|\xi U|^2+\int_{B_{R_1/2} \setminus
B_{r/2}}\left(\tau-CM^2\rho\right) \rho^{11-2\tau}\left|D\Delta^2 U\right|^2+\\
\nonumber
    +\sum_{k=0}^4 \int_{B_{R_1/2} \setminus
B_{r/2}}\tau^{8-2k}\left(\tau-CM^2\rho^3\right) \rho^{2k+1-2\tau}
|D^kU|^2
    \leq C\overline{M}^2(J_0+J_1),
\end{gather}
for $\tau\geq \overline{\tau}$, where $C$ is the same constant that
appears in \eqref{eq:33.1-cube}.

Now, taking into account that $\rho\leq 1$ in $B_{R_1}$, by
\eqref{Lemma-doub:6} we obtain
\begin{eqnarray}
    \label{Lemma-doub:1-7}
   && \tau^6r^3\int_{B_{R_1}}\rho^{-3-2\tau}|\xi U|^2+\frac{\tau}{2}\int_{B_{R_1/2} \setminus
B_{r/2}} \rho^{11-2\tau}\left|D\Delta^2 U\right|^2
    +\frac{1}{2}\sum_{k=0}^4 \tau^{9-2k}\int_{B_{R_1/2} \setminus
B_{r/2}} \rho^{2k+1-2\tau} |D^kU|^2 \nonumber\\
    &&\quad \quad \leq C\overline{M}^2(J_0+J_1),
\end{eqnarray}
for $\tau\geq \widetilde{\tau}$, where (recall that
$\overline{\tau}\geq 1$)
$$\widetilde{\tau}=\max\left\{2C\overline{M}^2, \overline{\tau}\right\}.$$

 Let us estimate $J_0$ and $J_1$. 
 
 We start by observing that by \eqref{eq:stima_rho}  for any $x\in B_{r/2}\setminus B_{r/4}$ we have that 
 \begin{equation}\label{stimacoronacircolare}
 \frac{r}{2^7}\le \rho(x)\le \frac{r}{2} \ .
 \end{equation}
 
{}From \eqref{eq:12a.2-cube}, \eqref{eq:27.1-cube} and 
\eqref{stimacoronacircolare}, we have

\begin{equation}
 \begin{aligned}\label{0-9}
J_0&= \int_{B_{r/2}\setminus B_{r/4}}\rho^{12-2\tau}
    \sum_{k=0}^5 \left(r^{k-6}|D^k U|\right)^2\leq
    C\left(\frac{r}{2^7}\right)^{-2\tau}\sum_{k=0}^5\int_{B_{r/2}}\left(r^{k}|D^k
    U|\right)^2\leq
C\left(\frac{r}{2^7}\right)^{-2\tau}\int_{B_{r}}U^2 , \quad \quad \quad \quad 
\end{aligned}
\end{equation}
where $C$ depends on $M$.

Similarly, we observe that 
by \eqref{eq:stima_rho}  for any $x\in B_{2R_1/3}\setminus B_{R_1/2}$ we have that 
 \begin{equation}\label{stimacoronacircolare2}
 \frac{R_1}{2^6}\le \rho(x)\le \frac{2R_1}{3} \ .
 \end{equation}

Again {}from \eqref{eq:12a.2-cube}, \eqref{eq:27.2-cube} and 
\eqref{stimacoronacircolare2}, we have

\begin{equation}
    \label{eq:37.1-cube}
     J_1
     \leq C\left(\frac{R_1}{2^6}\right)^{-2\tau}\int_{B_{R_1}}U^2.
\end{equation}
By \eqref{Lemma-doub:1-7}, \eqref{0-9}, \eqref{eq:37.1-cube} we have

\begin{gather}
\label{Lemma-doub:2-9}
    \tau^6r^3\int_{B_{R_1}}\rho^{-3-2\tau}|\xi U|^2+\tau^{9}\int_{B_{R_1/2} \setminus
B_{r/2}} \rho^{1-2\tau} U^2  \leq
C\overline{M}^2\left(\left(\frac{r}{2^7}\right)^{-2\tau}\int_{B_{r}}U^2+\left(\frac{R_1}{2^6}\right)^{-2\tau}\int_{B_{R_1}}U^2\right),
\end{gather}
for every $\tau\geq \widetilde{\tau}$.

Now, recalling that $2r<R<\frac{R_1}{2}$, by \eqref{eq:25.2-cube}
\eqref{eq:25.4-cube}, we have trivially

\begin{equation}
\label{Lemma-doub:2n-9}
    \tau^6r^3\int_{B_{R_1}}\rho^{-3-2\tau}|\xi U|^2\geq \frac{1}{8}(2r)^{-2\tau}\int_{B_{2r}\setminus B_{r/2}} U^2,
\end{equation}
and
\begin{equation}
\label{Lemma-doub:2nn-9}
    \tau^{9}\int_{B_{R_1/2} \setminus
B_{r/2}} \rho^{1-2\tau} U^2\geq
\left(R\right)^{1-2\tau}\int_{B_{R}\setminus B_{r/2}} U^2.
\end{equation}

By \eqref{Lemma-doub:2-9}, \eqref{Lemma-doub:2n-9} and
\eqref{Lemma-doub:2nn-9} we have

\begin{gather}
    \label{10.6.9-cube}
    (2r)^{-2\tau}\int_{B_{2r}\setminus B_{r/2} }U^2+R^{1-2\tau}\int_{B_{R} \setminus B_{r/2}}
    U^2\leq C \overline{M}^2\left[\left(\frac{r}{2^7}\right)^{-2\tau}\int_{B_r}U^2+
    \left(\frac{R_1}{2^6}\right)^{-2\tau}\int_{B_{R_1}}U^2
    \right],
\end{gather}
for every $\tau\geq \widetilde{\tau}$. Now, adding
$2R(2r)^{-2\tau}\int_{B_{r/2} }U^2$ to both sides of
\eqref{10.6.9-cube} we get the wished estimate
\eqref{Lemma-doub-cube} for $r<R/2$ and $R<\frac{R_1}{2}$.
\end{proof}

\bigskip

\begin{proof}[Proof of Theorem \ref{theo:40.teo}.]

Let us fix $R=\frac{R_1}{2^7}$ in \eqref{Lemma-doub-cube} obtaining

\begin{gather}
    \label{eq:10.6.952-cube}
    \frac{R_1}{2^7} (2r)^{-2\tau}\int_{B_{2r}}U^2+\left(\frac{R_1}{2^7}\right)^{1-2\tau}\int_{B_{R_1/2^7}}
    U^2
    \leq C \overline{M}^2\left[\left(\frac{r}{2^7}\right)^{-2\tau}\int_{B_r}U^2+
    \left(\frac{R_1}{2^6}\right)^{-2\tau}\int_{B_{R_1}}U^2
    \right],
\end{gather}
for every $\tau\geq \widetilde{\tau}$, where $\widetilde{\tau}$
depends on $M$ only  and $C$ is an absolute constants.

Now, choosing $\tau=\tau_0$, where

\begin{equation}
   \label{eq:10.6.1005-cube}
    \tau_0=\widetilde{\tau}+\log_4\frac{2^7C \overline{M}^2N}{R_1}
\end{equation}
and

\begin{equation}
    \label{eq:10.6.1008-cube}
    N=\frac{\int_{B_{R_1}}U^2}{\int_{B_{R_1/2^7}}
    U^2}
\end{equation}
we have
$$\left(\frac{R_1}{2^7}\right)^{1-2\tau}\int_{B_{R_1/2^7}}
    U^2\geq
C
\overline{M}^2\left(\frac{R_1}{2^6}\right)^{-2\tau}\int_{B_{R_1}}U^2.$$
Hence, by \eqref{eq:10.6.952-cube}, we obtain
\begin{equation}
    \label{eq:10.6.1014-cube}
    \frac{R_1}{2^7}(2r)^{-2\tau_0}\int_{B_{2r}}U^2
    \leq C \overline{M}^2\left(\frac{r}{2^7}\right)^{-2\tau_0}\int_{B_r}U^2,
\end{equation}
where $C$ is an absolute constant. Using \eqref{eq:10.6.1005-cube}
and \eqref{eq:10.6.1014-cube}, we have

\begin{equation}
    \label{eq:10.6.1024-cube}
    \int_{B_{2r}}U^2
    \leq CN^{\overline{k}}\int_{B_r}U^2,
\end{equation}
where $C$ depends on $M$ only and $\overline{k}=8$.

The proof is complete.
\end{proof}

\bigskip

\begin{cor}[{\bf Doubling inequality and three sphere inequality}]\label{SUCP}
Assume that $U\in H^6\left(B_{1}\right)$ satisfies inequality
\eqref{2-1-appdoub}.

Then there exists an absolute constant $R_1\in(0,1]$ such that for every $r\leq s\leq\frac{R_1}{2^8}$, we have

\begin{equation}\label{SUCP-1}
    \int_{B_{s}}U^2\leq
    CN^{\overline{k}}\left(\frac{s}{r}\right)^{\log_2\left(CN^{\overline{k}}\right)} \int_{B_r} U^2,
\end{equation}
where $N$ is given by \eqref{eq:10.6.1108-cube}.

In addition, if $2r\leq s\leq\frac{R_1}{2^8}$ then we have

\begin{equation}\label{Tre-sfere-U}
    \int_{B_{s}}U^2\leq
    \left(C\int_{B_{R_1}}U^2\right)^{1-\widetilde{\theta}(s,r)}\left(\int_{B_r}U^2\right)^{\widetilde{\theta}(s,r)},
\end{equation}
where
\begin{equation*}
   \widetilde{\theta}(s,r)=\frac{1}{1+2\overline{k}\log_2\frac{s}{r}}.
\end{equation*}
\end{cor}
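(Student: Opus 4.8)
The plan is to deduce both \eqref{SUCP-1} and \eqref{Tre-sfere-U} from the doubling inequality of Theorem \ref{theo:40.teo} by a straightforward iteration followed by an interpolation. Write $\phi(\varrho)=\int_{B_{\varrho}}U^{2}$, $\Lambda=\int_{B_{R_1}}U^{2}$, and let $\mathcal{D}=CN^{\overline{k}}$ denote the doubling constant furnished by Theorem \ref{theo:40.teo}, so that $\phi(2\varrho)\le\mathcal{D}\,\phi(\varrho)$ for every $\varrho<R_1/2^8$; we may assume the absolute constant satisfies $C\ge1$, and since $B_{R_1/2^7}\subset B_{R_1}$ we have $N\ge1$, hence $\mathcal{D}\ge1$. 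If $\phi(r)=0$ both inequalities are trivial (iterating the doubling inequality gives $\phi(s)=0$, and $0^{\widetilde{\theta}(s,r)}=0$), so from now on $\phi(r)>0$, and thus $\phi(s)\ge\phi(r)>0$.

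\emph{Step 1 (iterated doubling, proof of \eqref{SUCP-1}).} Given $r\le s\le R_1/2^8$, pick the integer $j\ge0$ with $2^{j}r\le s<2^{j+1}r$. Applying $\phi(2\varrho)\le\mathcal{D}\phi(\varrho)$ at the radii $\varrho=r,2r,\ldots,2^{j}r$, each of which lies in $(0,R_1/2^8)$ — with the sole borderline configuration $2^{j}r=s=R_1/2^8$ using only the radii up to $2^{j-1}r$ — one obtains $\phi(s)\le\phi(2^{j+1}r)\le\mathcal{D}^{\,j+1}\phi(r)$. Since $j\le\log_2(s/r)$, we get $\mathcal{D}^{\,j+1}\le\mathcal{D}\cdot\mathcal{D}^{\log_2(s/r)}=\mathcal{D}\,(s/r)^{\log_2\mathcal{D}}$, which is precisely \eqref{SUCP-1}.

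\emph{Step 2 (interpolation, proof of \eqref{Tre-sfere-U}).} Now let $2r\le s\le R_1/2^8$, and put $\delta=\log_2(s/r)\ge1$. Since $B_{s}\subset B_{R_1/2^7}$ we have $\phi(R_1/2^7)\ge\phi(s)$, whence $N=\Lambda/\phi(R_1/2^7)\le\Lambda/\phi(s)$ and therefore $\mathcal{D}=CN^{\overline{k}}\le C(\Lambda/\phi(s))^{\overline{k}}=:\mathcal{D}'$. Note $\mathcal{D}'\ge\mathcal{D}\ge1$; since the right-hand side of \eqref{SUCP-1} is non-decreasing in the doubling constant (the map $\varrho\mapsto\varrho\,(s/r)^{\log_2\varrho}$ being non-decreasing on $[1,\infty)$ when $s/r\ge1$), replacing $\mathcal{D}$ by $\mathcal{D}'$ in \eqref{SUCP-1} yields the self-referential bound
\[
\phi(s)\le C\big(\Lambda/\phi(s)\big)^{\overline{k}}\,(s/r)^{\log_2\big(C(\Lambda/\phi(s))^{\overline{k}}\big)}\,\phi(r).
\]
Taking $\log_2$ and writing $y=\log_2\phi(s)$, $x=\log_2\phi(r)$, $\ell=\log_2\Lambda$, this is $y\le(1+\delta)\big(\log_2C+\overline{k}(\ell-y)\big)+x$, i.e. $y\,[1+(1+\delta)\overline{k}]\le(1+\delta)(\log_2C+\overline{k}\ell)+x$, which gives
\[
\phi(s)\le\big(C^{1/\overline{k}}\Lambda\big)^{1-\theta}\,\phi(r)^{\theta},\qquad\theta=\frac{1}{1+(1+\delta)\overline{k}}.
\]
To finish, observe that $\delta\ge1$ forces $(1+\delta)\overline{k}\le2\overline{k}\delta$, so $\theta\ge\widetilde{\theta}(s,r)$; moreover $C^{1/\overline{k}}\Lambda\ge\phi(r)$ (because $C\ge1$ and $B_{r}\subset B_{R_1}$), and the geometric mean $\vartheta\mapsto a^{1-\vartheta}b^{\vartheta}$ is non-increasing in $\vartheta$ whenever $a\ge b$. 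Hence lowering the exponent from $\theta$ to $\widetilde{\theta}(s,r)$ only enlarges the bound, and $C^{1/\overline{k}}\le C$ then produces \eqref{Tre-sfere-U}.

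\emph{Main obstacle.} The one step that is not mere bookkeeping is the choice, in Step 2, to bound $N$ by $\Lambda/\phi(s)$ rather than by the cruder $\Lambda/\phi(r)$: the latter would spoil the coefficient of $\log_2\phi(r)$ (making it negative) and fail to give a genuine three-spheres estimate, whereas the former turns \eqref{SUCP-1} into an inequality that can be resolved for $\phi(s)$. Once that is done, identifying the resulting exponent with $\widetilde{\theta}(s,r)=(1+2\overline{k}\log_2(s/r))^{-1}$ via the monotonicity of the geometric mean — together with the harmless radius-endpoint check in Step 1 — is routine.
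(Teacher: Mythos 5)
Your proposal is correct and follows essentially the same route as the paper: iterate the doubling inequality of Theorem \ref{theo:40.teo} with $j=[\log_2(s/r)]$ to get \eqref{SUCP-1}, then bound $N\le \int_{B_{R_1}}U^2/\int_{B_s}U^2$ and resolve the resulting self-referential inequality for $\int_{B_s}U^2$ to get \eqref{Tre-sfere-U}. The only differences are cosmetic (you weaken the exponent $1+\log_2(s/r)$ to $2\log_2(s/r)$ at the end rather than at the start, and you spell out the borderline radius and the $\phi(r)=0$ cases, which the paper glosses over).
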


\begin{proof}

Let us prove \eqref{SUCP-1}. Let $r<s\leq\frac{R_1}{2^8}$. Denote
$$j=\left[\log_2\left(sr^{-1}\right)\right],$$ (where, for every
$a\in \mathbb{R}^+$, $[a]$ denotes the integer part of $a$). We have
$$2^jr\leq s <2^{j+1}r.$$  By iteration,
\eqref{eq:10.6.1102-cube} gives
\begin{equation*}
    \int_{B_{s}}U^2\leq \int_{B_{2^{j+1}r}}U^2
    \leq \left(CN^{\overline{k}}\right)^{j+1}\int_{B_r}U^2\leq CN^{\overline{k}}\left(\frac{s}{r}\right)^{\log_2(CN^{\overline{k}})}\int_{B_r}U^2
\end{equation*}
and \eqref{SUCP-1} follows.
Now let us prove \eqref{Tre-sfere-U}. By elementary properties of
logarithm function and by \eqref{SUCP-1} we have, for $2r\leq s\leq
\frac{R_1}{2^8}$,
\begin{equation}\label{SUCP-8}
    \int_{B_{s}}U^2\leq
    \left(CN^{\overline{k}}\right)^{2\log_2\frac{s}{r}}\int_{B_r}U^2,
\end{equation}
Now, by \eqref{eq:10.6.1108-cube} we have trivially
\begin{equation*}
    N=\frac{\int_{B_{R_1}}U^2}{\int_{B_{\frac{R_1}{2^7}}}U^2}\leq
    \frac{\int_{B_{R_1}}U^2}{\int_{B_{s}}U^2}.
\end{equation*}
By the last inequality and by \eqref{SUCP-8} we have
\begin{equation*}
    \left(\int_{B_{s}}U^2\right)^{1+2\overline{k}{\log}_2\frac{s}{r}}\leq
    \left(C\int_{B_{R_1}}U^2\right)^{2\overline{k}\log_2\frac{s}{r}}\int_{B_r}U^2
\end{equation*}
which implies \eqref{Tre-sfere-U}.

\end{proof}

\section{Appendix}\label{Appendix}

\begin{proof}[Proof of Lemma \ref{Lemma:Fichera}]

Formula \eqref{eq:change_var_first} is standard.
By a density argument it is not restrictive to assume $w\in C^2(\overline{\Omega})$.
Let $s$ be an arclenght defined on $\partial\Omega$. Locally $\partial\Omega$ is represented either as $(\xi_1,g(\xi_1))$, or as $(g(\xi_2), \xi_2)$, with $g\in C^2[a,b]$. 
To fix ideas, let us assume that $\partial\Omega$ is locally represented as $(\xi_1,g(\xi_1))$. The arclength $s$, up to an additive constant, is given by
\begin{equation*}
   s(\xi_1)=\pm\int_a^{\xi_1}\sqrt{1+(g'(t))^2} dt
\end{equation*}
the sign depending on the fact that the domain $\Omega$ is described locally either as
$\{x=(x_1,x_2)\ |\ x_2>g(x_1)\}$ or as $\{x=(x_1,x_2)\ |\ x_2<g(x_1)\}$ respectively.

To fix ideas, let us consider the first situation, so that we have
\begin{equation}
   \label{eq:s'}
   s'(\xi_1)=\sqrt{1+(g'(\xi_1))^2}, \quad \xi_1'(s)=\frac{1}{\sqrt{1+(g'(\xi_1(s)))^2}}.
\end{equation}
The unit tangent and outer normal vector at $(\xi_1,g(\xi_1))$
are given by
\begin{equation}
   \label{eq:vettore tangente}
   \tau=\left(\frac{1}{\sqrt{1+(g'(\xi_1))^2}}, \frac{g'(\xi_1)}{\sqrt{1+(g'(\xi_1))^2}},
	\right),
\end{equation}
\begin{equation}
   \label{eq:vettore normale}
   n=\left(\frac{g'(\xi_1)}{\sqrt{1+(g'(\xi_1))^2}}, \frac{-1}{\sqrt{1+(g'(\xi_1))^2}}
	\right).
\end{equation}
It is useful to notice that $n_1=\tau_2$, $n_2=-\tau_1$.

We have
\begin{equation}
   \label{eq:6-1}
   \tau_1,_s=-n_2,_s=\frac{-g'g''}{(1+(g')^2)^2}|_{\xi_1(s)}, \quad \tau_2,_s=n_1,_s=\frac{g''}{(1+(g')^2)^2}|_{\xi_1(s)}.
\end{equation}
Recalling that the curvature ${\cal K}$ is given by
\begin{equation}
   \label{eq:3bis-4}
   {\cal K}=\frac{g''}{(1+(g')^2)^{3/2}}|_{\xi_1(s)},
\end{equation}
we can rewrite the above formulas in the following form
\begin{equation}
   \label{eq:3ter-4}
   n_\alpha,_s={\cal K}\tau_\alpha, \quad \ \tau_\alpha,_s=-{\cal K}n_\alpha.
\end{equation}
%\begin{equation}
   %\label{eq:3quater-4}
   %\tau_\alpha,_s=-{\cal K}n_\alpha.
%\end{equation}

Let us introduce a local coordinate system $(y_1,y_2)=(s,z)$, where $s$ is the arclength parameter and $z$ is the shift along the direction of the outer unit normal $n$.
Let us consider the following map
\begin{multline*}
   x=\varphi(y),\quad (x_1,x_2)=\varphi(y_1,y_2) =\varphi(s,z)\qquad s\in I, 
	z\in(-\delta,\delta),\\
	\varphi(s,z)=(\xi_1(s), g(\xi_1(s)))+zn=\left(\xi_1(s)+z\frac{g'(\xi_1(s))}{\sqrt{1+(g'(\xi_1(s)))^2}},
	g(\xi_1(s))-z\frac{1}{\sqrt{1+(g'(\xi_1(s)))^2}}
	\right).
\end{multline*}
where $I$ is an open interval and $\delta$ is a positive constant.

The Jacobian matrix of $\varphi$ is
\begin{equation}
   \label{eq:2-2}
   J\varphi(s,z)=
	\left(
  \begin{array}{cc}
    \frac{1}{\sqrt{1+(g')^2}}+\frac{zg''}{(1+(g')^2)^2}\ \  & \frac{g'}{\sqrt{1+(g')^2}}\\
		 &  \\
		g'\left(\frac{1}{\sqrt{1+(g')^2}}+\frac{zg''}{(1+(g')^2)^2}\right)\ \ &  \frac{-1}{\sqrt{1+(g')^2}}
  \end{array}
\right),
\end{equation}
where $g'$ and $g''$ have to be computed in $\xi_1(s)$.

The determinant of the above matrix is given by
\begin{equation*}
   det(J\varphi)=-1-\frac{zg''}{(1+(g')^2)^{3/2}}.
\end{equation*}
Notice that $det(J\varphi)\neq 0$ in a suitable neighboorhood of
	$I\times \{0\}$, whose image through the map $\varphi$ is a neighboorhood of a portion of $\partial\Omega$.
	
Therefore $\varphi$ is locally invertible, and let $f$ its inverse:
\begin{equation*}
   y=f(x), \quad (s,z)=(y_1,y_2)=f(x_1,x_2)=(f_1(x_1,x_2), f_2(x_1,x_2)).
\end{equation*}

The Jacobian matrix of $f$ is
\begin{equation}
   \label{eq:2-3}
   Jf(x_1,x_2)=
	\left(
  \begin{array}{cc}
    \left(\sqrt{1+(g')^2}+\frac{zg''}{1+(g')^2}\right)^{-1}\ \  & g'\left(\sqrt{1+(g')^2}+\frac{zg''}{1+(g')^2}\right)^{-1}\\
		 &  \\
		\frac{g'}{\sqrt{1+(g')^2}}\ \ &  \frac{-1}{\sqrt{1+(g')^2}}
  \end{array}
\right),
\end{equation}
where $g'$ and $g''$ have to be computed in $\xi_1(f_1(x_1,x_2))$ and $z$ has to be computed in 
$f_2(x_1,x_2)$.

Let us recall the following formula (see for instance \cite[p. 458, formula (7.101)]{PF})
\begin{eqnarray}
   \label{eq:1-4}
   w,_{\alpha\beta}(x)=&&\sum_{i,j=1}^2w,_{y_iy_j}(y)f_i,_{x_\beta}(x)f_i,_{x_\alpha}(x)+
	\sum_{i,j=1}^2 w,_{y_i}(y)f_j,_{x_\alpha}(x)(f_i,_{x_\beta}),_{y_j}(y)= \nonumber \\
	&&= I+II,
\end{eqnarray}
where $y=f(x)$. Replacing $y_1$ and $y_2$ with $s$ and $z$ respectively and noticing that $(\cdot),_n=(\cdot),_z$ at boundary points, we have that
\begin{equation}
   \label{eq:2-4}
   I=w,_{ss}\tau_\alpha\tau_\beta+w,_{nn}n_\alpha n_\beta+
w,_{sn}(\tau_\alpha n_\beta+\tau_\beta n_\alpha),
\end{equation}
\begin{equation}
   \label{eq:3-4}
   II=w,_{s}\left(\tau_\alpha (f_1,_{x_\beta}),_{s}+n_\alpha (f_1,_{x_\beta}),_{z}
	\right)+
 w,_{n}\left(\tau_\alpha (f_2,_{x_\beta}),_{s}+n_\alpha (f_2,_{x_\beta}),_{z}\right).
\end{equation}

Recalling \eqref{eq:6-1} and \eqref{eq:3ter-4}, 
we can compute at boundary points ($z=0$)
\begin{equation}
   \label{eq:1-5} f
   (f_1,_{x_\beta}),_{s}=\tau_\beta,_s=-{\cal K}n_\beta,  \quad (f_1,_{x_\beta}),_{z}=-n_\beta,_s=-{\cal K}\tau_\beta,
\end{equation}
\begin{equation}  
   \label{eq:3-5}
   (f_2,_{x_\beta}),_{s}=n_\beta,_s={\cal K}\tau_\beta, \quad  (f_2,_{x_\beta}),_{z}=0.
\end{equation}
Therefore, we have
\begin{eqnarray}
   \label{eq:4bis-5}
   &&II=w,_s(\tau_\alpha\tau_\beta,_s-n_\alpha n_\beta,_s)+w,_n\tau_\alpha n_\beta,_s=w,_s(-{\cal K}\tau_\alpha n_\beta-{\cal K}n_\alpha \tau_\beta)+w,_n{\cal K}\tau_\alpha \tau_\beta=\nonumber\\
	&&= w,_s(\tau_\beta\tau_\alpha,_s-n_\beta n_\alpha,_s)+w,_n\tau_\beta n_\alpha,_s
\end{eqnarray}
and {}from \eqref{eq:2-4} and the above formula we have \eqref{eq:change_var_secondBIS}.
If $\partial\Omega$ is locally represented as $(g(\xi_2),\xi_2)$, by inverting the role of the variables, we directly get \eqref{eq:change_var_secondBIS}.
\end{proof}

\section*{Acknowledgments}
The work of ER, ES and SV was performed under the \text{PRIN grant No. 201758MTR2-007}. ES and SV have also been supported by \text{Gruppo Nazionale per l'Analisi Matematica, la Probabilità e le loro applicazioni (GNAMPA)} by the grant "Problemi inversi per equazioni alle derivate parziali". 

\bibliographystyle{plain}

\end{document}